\newtheorem{proposition}{Proposition}[section]
\newtheorem{theorem}[proposition]{Theorem}
\newtheorem{corollary}[proposition]{Corollary}
\newtheorem{lemma}[proposition]{Lemma}
\theoremstyle{definition}
\newtheorem{definition}[proposition]{Definition}
\theoremstyle{remark}
\newtheorem{remark}[proposition]{Remark}
\newtheorem{example}[proposition]{Example}
\numberwithin{equation}{section}
\def\ind{\operatorname{ind}}
\def\R{\mathbb R}
\def\({\left(}
\def\){\right)}
\def\Dt{\partial_t}
\def\Dx{\Delta_x}
\def\Nx{\nabla_x}
\def\tto{\big|_{t=0}}
\def\DOM{\big|_{\partial\omega}}
\def\<{\left<}
\def\>{\right>}
\def\hat{\widehat}
\def\Ree{\operatorname{Re}}
\def\Imm{\operatorname{Im}}
\def\eb{\varepsilon}
\def\dist{\operatorname{dist}}
\def\Cal{\mathcal}
\def\Bbb{\mathbb}
\begin{document}
\title[Elliptic attractors and the parabolic limit]{
Attractors for the nonlinear elliptic boundary value problems and
their parabolic singular limit}

\author [M. Vishik and S. Zelik]{Mark I. Vishik${}^1$ and Sergey V. Zelik${}^{1,2}$}
\date{\today}
\address{
${}^1$ Institute for Information Transmission Problems (Kharkevich
Institut), Russian Academy of Sciences, Bolshoi Karetniy~19, Moscow
127\,994, GSP-4, Russia}

\address{
${}^2$  University of Surrey, Department of Mathematics, \newline
Guildford, GU2 7XH, United Kingdom }
\begin{abstract}
We apply the dynamical approach to the study of the second order
semi-linear elliptic boundary value problem
 in a cylindrical domain
with a small parameter $\eb$ at the second derivative with respect to
the variable $t$ corresponding  to the axis of the cylinder.
We prove that, under natural assumptions on the nonlinear interaction
function $f$ and the external forces $g(t)$, this problem possesses
the uniform attractor $\Cal A_\eb$ and that these attractors tend
as $\eb\to0$ to the attractor $\Cal A_0$ of the limit parabolic
equation. Moreover, in case where the limit attractor $\Cal A_0$ is
regular, we give the detailed description of the structure of
the uniform attractor $\Cal A_\eb$, if $\eb>0$ is small enough, and
estimate the symmetric distance between the attractors $\Cal A_\eb$
and~$\Cal A_0$.
\end{abstract}

\keywords {elliptic boundary value problems, dynamical approach,
uniform attractors, regular attractors}

\subjclass {35B41, 35B45}
\maketitle
\tableofcontents

\section{Introduction}\label{s0}
We consider the following semi-linear elliptic boundary value
problem in an infinite cylinder $\Omega:=\R\times\omega$:
\begin{equation}\label{0.1}
a(\Dt^2 u+\Dx u)-\eb^{-1}\gamma\Dt u-f(u)=g(t),\ \ (t,x)\in\Omega,\ \
u\DOM=0,
\end{equation}
where $\omega\subset\subset\R^n$ is a bounded domain of $\R^n$,
$u=(u_1,\cdots,u_k)$ is an unknown vector-valued function, $a$ and
$\gamma$ are given constant matrices which satisfy $a+a^*>0$ and
 $\gamma=\gamma^*>0$, $f$ and $g$ are given nonlinear interaction
function and the external forces respectively which satisfy some
natural assumptions (formulated in Section \ref{s1}) and $\eb>0$ is a small
parameter.
\par
Elliptic boundary problems of the form \eqref{0.1} appear, e.g.
under studying the equilibria or the traveling waves for the
corresponding evolution equations of mathematical physics. For
instance, let us consider the following reaction-diffusion system
in the unbounded cylindrical domain $\Omega$:
\begin{equation}\label{0.2}
\partial_\eta v=a\Delta_{(t,x)}v-f(v)-g(t-\eb^{-1}\gamma\eta),\
\ v\big|_{\partial\Omega}=0,
\end{equation}
where the variable $t\in\R$ remains to be
 spatial, the variable $\eta$ plays the
role of physical time and $\gamma$ is a diagonal matrix. Thus,
the external forces $g(\eta,t,x):=g(t-\eb^{-1}\gamma\eta,x)$
 in \eqref{0.2} have the form of a fast
traveling (along the axis of the cylinder) wave
 (with the wave speed $\eb^{-1}\gamma\gg1$). Then, the problem
of finding the traveling wave solution
$v(\eta,t,x):=u(t-\eb^{-1}\gamma\eta,x)$ of equation \eqref{0.2}
which is modulated by the traveling wave external forcing,
 obviously, reduces to the study of elliptic problem \eqref{0.1}.
Another natural example is the following reaction-diffusion system
in the cylinder $\Omega$:
\begin{equation}\label{0.3}
\partial_\eta v=a\Delta_{(t,x)}v-\eb^{-1}\gamma\Dt v-f(v)-g(t),\ \
 (t,x)\in\Omega, \ \
u\big|_{\partial\Omega}=0
\end{equation}
with the strong drift along the axis of the cylinder (which is
described by the transport term $\eb^{-1}\gamma\Dt v$). Then,
\eqref{0.1} is the equation on equilibria for problem~\eqref{0.3}.
\par
It is convenient to scale from the very beginning the variable $t$ as
follows: $t':=\eb^{-1}t$. Then, problem \eqref{0.1} reads
\begin{equation}\label{0.4}
a(\eb^2\Dt u+\Dx u)-\gamma\Dt u-f(u)=g_\eb(t),\ \ u\DOM=0,\ \
g_\eb(t):=g(\eb^{-1}t),
\end{equation}
where we denote the new variable $t'$ by $t$ again for simplicity.
\par
We are interested in the global structure of the set of bounded
(with respect to $t\to\pm\infty$) solutions of problem
\eqref{0.4}. To this end, we use the so-called dynamical approach
for the study of elliptic boundary value problems in cylindrical
domains which has been initiated in \cite{6} and \cite{16},
 see also \cite{2,3}, \cite{7}, \cite{10}, \cite{11},
\cite{19,191,20,21,22,23}, \cite{26,27,28}
 and the references therein for its further development.
 Following this approach, we
introduce, for every $\tau\in\R$, the auxiliary elliptic boundary
value problem:
\begin{equation}\label{0.5}
\begin{cases}
a(\eb^2\Dt u+\Dx u)-\gamma\Dt u=g_\eb(t),\ \ (t,x)\in\Omega_+^\tau,\\
u\DOM=0,\ \ u\big|_{t=\tau}=u_\tau,
\end{cases}
\end{equation}
in the half-cylinder $\Omega_+^\tau:=(\tau,+\infty)\times\omega$
equipped by the additional boundary condition $u|_{t=\tau}=u_\tau$ at
the origin of the half-cylinder $\Omega_+^\tau$ and the function
$u_\tau$ is assumed to belong to the appropriate functional space
 $V^p_\eb(\omega)$ which will be specified in Section \ref{s1}. If
problem \eqref{0.5} possesses a unique (bounded as $t\to+\infty$)
solution (in certain functional class), for every $u_\tau\in
V^p_\eb(\omega)$, then \eqref{0.5} defines a dynamical process
 $\{U^\eb_{g_\eb}(t,\tau),\, t,\tau\in\R,\, t\ge\tau\}$ via
\begin{equation}\label{0.6}
U^\eb_{g_\eb}(t,\tau)u_\tau:=u(t),\ \ \text{where $u(t)$ solves
\eqref{0.5}},\ \ U^\eb_{g_\eb}(t,\tau): V^p_\eb(\omega)\to V^p_\eb(\omega).
\end{equation}
Moreover, if this dynamical process possesses a global (uniform)
attractor $\Cal A_\eb$, then this attractor is generated by all
bounded (with respect to $t\to\pm\infty$) solutions of the initial
problem \eqref{0.4} (and all its shifts along the $t$ axis, together
with their closure in the corresponding topology, see Section \ref{s3} for
the details). Thus, studying  of the bounded solutions of \eqref{0.4}
is, in a sense, equivalent to the study of the attractor $\Cal A_\eb$ of
auxiliary dynamical process \eqref{0.6}.
\par
In the present paper, we give a detailed study of
auxiliary problems \eqref{0.5} in case $\eb$ is small enough
and investigate their behavior as $\eb\to0$.
The paper is organized as follows.
 The existence of
a bounded solution $u(t)$ of problem \eqref{0.5} and several important
estimates are derived in Section \ref{s1}. The uniqueness of this solution
is verified in Section \ref{s2} under the assumption that $\eb$ is small
enough. Moreover, we show there that the dynamical process
\eqref{0.6} associated with problem \eqref{0.5} is
uniformly (with respect to $\eb$)
Frechet
differentiable with respect to the 'initial data' $u_\tau\in
V^p_\eb(\omega)$. The existence of the uniform attractor $\Cal A_\eb$
for the process \eqref{0.6} is established in Section \ref{s3}.  Moreover,
we prove there that, for rather wide class of the external
forces $g$, the attractors $\Cal A_\eb$ converge as $\eb\to0$ (in the
sense of upper semicontinuity) to the attractor $\Cal A_0$ of the
limit parabolic problem
\begin{equation}\label{0.7}
\gamma\Dt u-a\Dx u+f(u)=g_0(t),\ \ u\DOM=0,\ \ u\big|_{t=\tau}=u_\tau,
\end{equation}
where the limit external forces $g_0(t)$ average the external
 forces $g_\eb(t):=g(\eb^{-1}t)$ of problems \eqref{0.5}. In
 particular, the class of admissible external forces $g$ contains
the autonomous external forces: $g(t)\equiv g_0$,
heteroclinic profiles:
\begin{equation}\label{0.8}
g(t)\to g_\pm\ \ \text{ as $t\to\pm\infty$ and $g_\pm$ are independent
of $t$},
\end{equation}
solitary waves ($g_+=g_-$ in \eqref{0.8}), periodic, quasiperiodic
and almost-periodic with respect to $t$ external forces $g$
 and even some classes of  non
almost-periodic oscillations, see Examples \ref{Ex3.1}--\ref{Ex3.3}.
\par
Furthermore, in Section \ref{s4}, we prove that dynamical processes
\eqref{0.6} tend as $\eb\to0$ to the process $U^0_{g_0}(t,\tau)$
associated with limit parabolic problem \eqref{0.7} and obtain
the quantitative bounds for that convergence in terms of the parameter
$\eb$.
\par
In Sections \ref{s5} and \ref{s6}, we restrict ourselves to consider only the case of
almost-periodic external forces $g(t)$ in the right-hand side of
equation \eqref{0.5}. In this case, limit parabolic equation
\eqref{0.7}  is autonomous
\begin{equation}\label{0.9}
g_0(t):=\bar g,
\end{equation}
where $\bar g$ is the mean of almost-periodic function $\bar g$. We
also assume that the global attractor $\Cal A_0$ of the limit
parabolic equation is regular (it will be so if this equation
possesses a global Lyapunov function and all of the equilibria are
hyperbolic, see Section \ref{s6} for the details). Then, using the theory
of non-autonomous perturbations of regular attractors developed in
\cite{12,13,29}, we establish the existence of the non-autonomous
regular attractor for problems \eqref{0.6} if $\eb$ is small
enough. In this case, the attractors $\Cal A_\eb$ are occurred
not only upper semicontinuous, but also lower semicontinuous as
$\eb\to0$ and we give the quantitative bounds for the symmetric
distance between them in terms of the perturbation parameter $\eb$.
In particular, we prove there that equation \eqref{0.4} possesses
the finite number of different almost-periodic (with respect to
$t$) solutions and that every other bounded solution of that
equation is a heteroclinic orbit between two different
almost-periodic solutions. We also recall that the regular
attractor for system \eqref{0.5} with $\eb=1$, $\gamma\gg1$ and
{\it autonomous} external forces $g_\eb$ has been considered in our
previous paper~\cite{27}. Moreover, the estimates for the {\it
nonsymmetric} Hausdorff distance between the attractors $\Cal
A_\eb$ and $\Cal A_0$ in terms of the parameter $\eb$ have been
obtained in \cite{28} for the case where the attractor $\Cal A_0$
of the limit parabolic equation is regular and the external forces
$g_\eb(t)=g(\eb^{-1}t)$ are almost-periodic with respect to $t$.
\par
Finally, several uniform (with respect to $\eb$) estimates
for the linear equation of the form \eqref{0.4}
which are systematically used throughout of the paper
are gathered in Appendix.
\par
{\bf Acknowledgements:}

This research was partially supported by the Russian Foundation of
Basic Researches (projects 11-01-00339 and 10-01-00293).


\section{Uniform (with respect to $\eb\to0$) a priori estimates}\label{s1}
In this section, we consider the following nonlinear elliptic
boundary value problem in a half cylinder
$\Omega_+^\tau:=[\tau,+\infty)\times\omega$, $\tau\in\R$:
\begin{equation}\label{1.1}
\begin{cases}
a(\eb^2\Dt^2u+\Dx u)-\gamma\Dt u-f(u)=g(t),\ \ (t,x)\in\Omega_+^\tau,\\
u\DOM=0,\ \ \ u\big|_{t=\tau}=u_\tau,
\end{cases}
\end{equation}
where $\omega\subset\subset\R^n$ is a bounded domain of $\R^n$ with a
sufficiently smooth boundary,
$u=(u^1,\cdots,u^k)$ is an unknown vector-valued function,
$\Dx$ is the Laplacian with respect to $x$,
$a$ and $\gamma$ are given constant $k\times k$-matrices satisfying
$a+a^*>0$ and $\gamma=\gamma^*>0$, $f(u)$ is a given nonlinear
function which satisfies the following assumptions:
\begin{equation}\label{1.2}
\begin{cases}
1.\ \ f\in C^2(\R^k,\R^k),\\
2.\ \ f(v).v\ge -C,\ \ f'(v)\ge -K,\ \ \forall v\in\R^k,\\
3.\ \ |f(v)|\le C(1+|v|^q),\ \forall v\in\R^k,\  \
q<q_{max},
\end{cases}
\end{equation}
where  $v.w$ stands for the inner product of the
vectors $v\in\R^k$ and $w\in\R^k$ and the critical growth exponent $q_{max}$ equals infinity for $n=1$ or $n=2$ and $q_{max}:=\frac{n+2}{n-2}$ for  $n\ge3$. In order to formulate our assumptions on the solution
$u(t)$, the external forces $g(t)$ and the initial data $u_0$, we
need to define the appropriate functional spaces.
\begin{definition}\label{Def1.1} For every $l\in\R_+$ and $s\in[2,\infty)$,
we define the following spaces:
\begin{equation}\label{1.3}
W^{l,s}_b(\Omega_+^\tau):=\{u\in D'(\Omega_+^\tau),\ \
\|u\|_{W^{l,s}_b}:=\sup_{T\ge\tau}\|u\|_{W^{l,s}(\Omega_T)}<\infty\},
\end{equation}
where $\Omega_T:=(T,T+1)\times\omega$ and $W^{l,s}$ denotes the
ordinary Sobolev space of functions whose derivatives up to order
$l$ belong to $L^s$, see \cite{25}. In particular, we write in the
sequel $L^s_b(\Omega^\tau_+)$ instead of
$W^{0,s}_b(\Omega_+^\tau)$.
\par
Moreover, we also introduce the following spaces associated with
the linear part of equation \eqref{1.1}:
\begin{multline}\label{1.4}
W^{(1,2),s}_\eb(\Omega_T):=\{u\in D'(\Omega_T),\ \
\|u\|_{W^{(1,2),s}_\eb}:=\\=
\eb^2\|\Dt^2u\|_{L^s(\Omega_T)}+\|\Dt u\|_{L^s(\Omega_T)}+
\|u\|_{L^s([T,T+1],W^{2,s}(\omega))}<\infty,\ \ u\DOM=0\}
\end{multline}
and, analogously to \eqref{1.3}
\begin{equation}\label{1.45}
W^{(1,2),s}_{\eb,b}(\Omega_+^\tau):=\{u\in D'(\Omega_+^\tau),\
\|u\|_{W^{(1,2),s}_{\eb,b}}:=\sup_{T\ge\tau}
\|u\|_{W^{(1,2),s}_\eb(\Omega_T)}<\infty,\ \ u\DOM=0\}.
\end{equation}
We also introduce the uniform with respect to $\eb$ trace space
(at $t=\tau$)
of functions belonging to the space \eqref{1.45}:
\begin{multline}\label{1.5}
V_\eb^s(\omega):=\{u\in D'(\omega),\\
\|u\|_{V^p_\eb}:=\|u\|_{W^{2(1-1/s),s}(\omega)}+
\eb^{1/s}\|u\|_{W^{2-1/s,s}(\omega)}<\infty,\
u\DOM=0\}.
\end{multline}
We note that, for $\eb>0$, the space
$W^{(1,2),s}_{\eb,b}(\Omega_+^\tau)$ is equivalent to
 $W^{2,s}_b(\Omega_+^\tau)$ and, for $\eb=0$ this space
coincides with the anisotropic Sobolev-Slobodetskij space
 $W^{(1,2),s}_b(\Omega_+^\tau)$ which corresponds to a second order
parabolic operator.  Analogously,  for $\eb>0$, the space $V_\eb^s(\omega)$ coincides with the trace space at $t=\tau$ of
of the classical Sobolev space $W^{2,s}(\Omega_\tau)$ and, for $\eb=0$, we have the trace space at $t=\tau$, for the anisotropic space $W^{(1,2),s}(\Omega_\tau)$,
 see e.g. \cite{5}. The dependence of the norms \eqref{1.4} and \eqref{1.5} on the parameter $\eb$ is
  chosen in such way that the constants in the proper maximal regularity estimates and the trace theorems will be independent of $\eb\to0$, see Appendix below.
\end{definition}
\begin{definition}\label{Def1.sol} We restrict ourselves to consider only such solutions $u(t)$ of
problem \eqref{1.1} which remain bounded as $t\to+\infty$. To be more precise, a function $u$ will be a solution of problem \eqref{1.1} if $u$
 belongs to the space
 $W^{(1,2),p}_{\eb,b}(\Omega_+^\tau)$ for some  $p>p_{min}:=\max\{2,(n+2)/2\}$, and satisfies the equation and the boundary conditions in the sense of distributions. Then, we should require the initial data $u_0$ belongs to the trace space $V^p_\eb(\omega)$
 and the external forces $g\in L^p_b(\Omega_+^\tau)$.
 \par
Note that the assumption $p>p_{max}$ guarantees that $u\in C_b(\Omega_+^\tau)$ for all $\eb$ (including the limit case $\eb=0$) and, therefore, the non-linearity $f(u)$ is well-defined and belongs to $C_b(\Omega_+^\tau)$ for all $u\in W^{(1,2),p}_{\eb,b}(\Omega_+^\tau)$.
\end{definition}

The main
result of this section is the following theorem.
\begin{theorem}\label{Th1.1} Let the above assumptions hold. Then, for every
$\eb\in[0,1]$ and
$u_\tau\in V^p_\eb(\omega)$, problem \eqref{1.1} has at least one
solution $u\in W^{(1,2),p}_{\eb,b}(\Omega_+^\tau)$ and the following
estimate hold, for every such solution:
\begin{equation}\label{1.6}
\|u\|_{W^{(1,2),p}_{\eb,b}(\Omega_T)}\le
Q(\|u_\tau\|_{V^p_\eb(\omega)})e^{-\alpha(T-\tau)}+
Q(\|g\|_{L^p_b(\Omega_+^\tau)}),
\end{equation}
where the constant $\alpha>0$ and the monotonic function
$Q:\R_+\to\R_+$ are independent of $\eb\in[0,1]$, $u_0\in
V^p_\eb(\omega)$, $\tau\in\R$, $T\ge\tau$ and $g\in L^p_b(\Omega_+^\tau)$.
\end{theorem}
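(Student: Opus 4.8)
The plan is to prove \eqref{1.6} by a standard continuation/nonlinear-regularity bootstrap, built on top of the uniform-in-$\eb$ linear maximal regularity and trace estimates collected in the Appendix. Throughout, the guiding principle is that the $\eb$-weighted norms in Definition~\ref{Def1.1} are precisely engineered so that every linear estimate I invoke has an $\eb$-independent constant, so the only place where care with $\eb$ is needed is in the treatment of the nonlinearity $f$, and there the crucial structural facts are $f(v).v\ge -C$ and $f'(v)\ge -K$ from \eqref{1.2}.

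First I would derive the basic energy estimate. Take the $L^2(\Omega_+^\tau)$ inner product of the equation with $\gamma^{-1}$ times a suitable test function built from $u$ (the natural choice being $u$ itself multiplied by an exponential weight $e^{\beta(t-\tau)}$ with $\beta$ small, to capture the exponential decay of the initial-data contribution). Integration by parts in $t$ and $x$ produces the good terms $\eb^2\|\Dt u\|^2$, $\|\Nx u\|^2$ and, from $\gamma\Dt u$ after symmetrization, a boundary-in-time term controlling $\|u(t)\|_{L^2(\omega)}^2$; the term $f(u).u$ is bounded below by $-C$ thanks to \eqref{1.2}(2); and the forcing term is absorbed by Cauchy--Schwarz. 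Since the domain is a half-cylinder the boundary terms at $t=+\infty$ must be handled by first working on finite cylinders $(\tau,T)\times\omega$ and letting $T\to\infty$, using the assumed boundedness $u\in W^{(1,2),p}_{\eb,b}$ to kill the terms at $t=T$; the exponential weight then yields
\begin{equation}\label{planenergy}
\eb^2\|\Dt u\|^2_{L^2_b}+\|u\|^2_{W^{1,2}_b(\Omega_+^\tau)}\le Q(\|u_\tau\|_{V^p_\eb})e^{-\alpha(T-\tau)}+Q(\|g\|_{L^2_b}).
\end{equation}
A parallel estimate after testing with $\Dt u$ (again with an exponential weight, and using $f'(v)\ge -K$ to control $\frac{d}{dt}F(u)$ where $F'=f$) upgrades this to control of $\eb\|\Dt u\|$ in higher norms and of $\|\Nx u\|_{L^\infty_t L^2_x}$, which is the first-order energy level.

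Next comes the regularity bootstrap. Rewrite \eqref{1.1} as a linear problem $a(\eb^2\Dt^2 u+\Dx u)-\gamma\Dt u = g(t)+f(u)=:h(t)$ with initial data $u_\tau$, and apply the uniform-in-$\eb$ maximal $L^s$ regularity estimate from the Appendix: $\|u\|_{W^{(1,2),s}_{\eb,b}}\le C(\|h\|_{L^s_b}+\|u_\tau\|_{V^s_\eb})$ with $C$ independent of $\eb$. The issue is to control $\|f(u)\|_{L^s_b}$: by \eqref{1.2}(3), $|f(u)|\le C(1+|u|^q)$ with $q<q_{max}$, so starting from the energy-level bound $u\in W^{1,2}_b\hookrightarrow L^{r_0}_b$ for the Sobolev exponent $r_0$ appropriate to the $(n+1)$-dimensional cylinder, one gets $f(u)\in L^{r_0/q}_b$, feeds this into maximal regularity to get $u$ in a better space, re-embeds, and iterates. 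The subcriticality $q<(n+2)/(n-2)$ guarantees this iteration strictly increases the integrability exponent and terminates after finitely many steps at any prescribed $p>p_{min}$, at which point $W^{(1,2),p}_{\eb,b}\hookrightarrow C_b(\Omega_+^\tau)$ (as noted in Definition~\ref{Def1.sol}), closing the loop. Keeping the exponential weight through each step of the bootstrap transfers the $e^{-\alpha(T-\tau)}$ decay of the $u_\tau$-contribution to the final norm, yielding \eqref{1.6}; one should shrink $\alpha$ a fixed amount at each of the finitely many steps, which is harmless.

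Finally, existence of a solution: I would obtain it by a Galerkin or, more naturally here, a vanishing-viscosity/elliptic-regularization-and-truncation scheme on expanding finite cylinders $(\tau,T_j)\times\omega$ with, say, Dirichlet or Neumann data at $t=T_j$, producing approximate solutions that satisfy the a priori bound \eqref{1.6} uniformly in $T_j$ (this is exactly why the estimate was proved for \emph{every} solution first, independently of existence), then passing to the limit $T_j\to\infty$ using weak compactness in $W^{(1,2),p}_{\eb,b}$ on every $\Omega_T$ together with the compact Sobolev embeddings to pass to the limit in the nonlinear term $f(u)$. I expect the main obstacle to be the first step — getting the energy estimates genuinely uniform in $\eb$ on the half-cylinder, since the elliptic equation is not an evolution equation and one cannot simply "solve forward"; the exponential-weight trick plus the a priori boundedness hypothesis at $t=+\infty$ is what makes the boundary terms at infinity vanish, and making that rigorous (justifying the limit $T\to\infty$ and the sign of every boundary term, uniformly as $\eb\to0$ so that the degenerate-in-$\eb$ second-order term does not produce an uncontrolled contribution) is the technical heart of the argument.
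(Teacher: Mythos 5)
Your overall architecture --- a weighted $L^2$ energy estimate, followed by linear maximal regularity from the Appendix applied to $a(\eb^2\Dt^2u+\Dx u)-\gamma\Dt u=g+f(u)$, a finite bootstrap exploiting $q<q_{max}$, and existence via approximation on finite cylinders --- is the paper's, but two of your steps fail as stated. First, your ``parallel estimate after testing with $\Dt u$'' invokes a potential $F$ with $F'=f$; but $f:\R^k\to\R^k$ is not assumed to be a gradient (that hypothesis only enters in Section \ref{s6}), and likewise $\<a\Dx u,\Dt u\>_\tau$ is a total $t$-derivative only when $a=a^*$, which is not assumed. The paper's substitute is to multiply the equation by the weighted elliptic operator $\eb^2\Dt(\phi_T\Dt u)+\phi_T\Dx u$ with $\phi_T(t)=e^{-\alpha|t-T|}$: after integration by parts the nonlinearity contributes the quadratic forms $\<f'(u)\Dt u.\Dt u,\phi_T\>_\tau$ and $\<f'(u)\Nx u.\Nx u,\phi_T\>_\tau$, which are bounded below using only $f'(v)\ge-K$, while the principal part yields $\<|\Cal L_\eb u|^2,\phi_T\>_\tau$ with $\Cal L_\eb u=\eb^2\Dt^2u+\Dx u$, and hence $\eb^4\|\Dt^2u\|_{L^2}^2+\|\Dx u\|_{L^2}^2$ via Lemma \ref{LemA.1}.

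Second, and more seriously, your bootstrap base is too weak. The isotropic embedding of $W^{1,2}(\Omega_T)$ in the $(n+1)$-dimensional cylinder gives only $L^{r_0}$ with $r_0=\frac{2(n+1)}{n-1}$, so $f(u)\in L^{r_0/q}$ improves on $L^2$ only for $q<\frac{n+1}{n-1}$; for $n=3$ this means $q<2$, nowhere near $q_{max}=5$. To start the iteration on the full subcritical range one needs $u\in L^{2q_{max}}(\Omega_T)$, which the paper obtains from the \emph{anisotropic} embedding \eqref{1.18}, i.e.\ from $u\in L^\infty((T,T+1),W^{1,2}(\omega))\cap L^2((T,T+1),W^{2,2}(\omega))$. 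The $L^\infty_tW^{1,2}_x$ piece requires the step you omit: rewriting the elliptic system as the parabolic one $\gamma\Dt u=a\Dx u-f(u)+\eb^2a\Dt^2u-g$ (the term $\eb^2a\Dt^2u$ being already controlled in $L^2$ by the previous step) and running a Gronwall argument in $t$ after multiplying by $\Dx u$. Without this, the claimed termination of the bootstrap for all $q<q_{max}$ does not follow. A minor further point: your weight $e^{\beta(t-\tau)}$ with $\beta>0$ is not integrable on the half-cylinder for merely bounded $u$; the paper's two-sided weight $e^{-\alpha|t-T|}$ is summable at both ends and makes the passage $T\to\infty$ unnecessary.
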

\begin{proof} We first prove the analogue of estimate \eqref{1.6} for
$p=2$.
\begin{lemma}\label{Lem1.1} Let $u(t)$ be a solution of \eqref{1.1} (in the sense of Definition \ref{Def1.sol}). Then,
the following estimate holds:
\begin{equation}\label{1.7}
\|u\|_{W^{(1,2),2}_\eb(\Omega_T)}\le
Q(\|u_\tau\|_{V_\eb^p(\omega)})
e^{-\alpha(T-\tau)}+Q(\|g\|_{L^2_b(\Omega_+^\tau)}),
\end{equation}
where the constant $\alpha>0$ and the monotonic function $Q$
 are independent of
$\tau\in\R$, $T\ge\tau$, $\eb\in[0,1]$, $u_0$ and  $g$.
\end{lemma}
\begin{proof} We set $\phi_T(t):=e^{-\alpha|t-T|}$, where $T\in\R$ and
$\alpha>0$ is a small parameter which will be specified below,
multiply equation \eqref{1.1} by $\phi_T(t)u(t)$ and integrate
over $\Omega_+^\tau$. Then, after integrating by parts and using that $\gamma=\gamma^*$, we have
\begin{multline}\label{1.add1}
\<\eb^2a\Dt u.\Dt u+a\Nx u.\Nx u,\phi_T\>_\tau+\<f(u).u,\phi_T\>_\tau=1/2\<\gamma u.u,\phi'_T\>_\tau-\\-\eb^2\<a\Dt u.u,\phi'_T\>_\tau-\<g.u,\phi_T\>_\tau+\eb^2(a\Dt u(\tau).u(\tau),\phi_T(\tau))-1/2(\gamma u(\tau).u(\tau),\phi_T(\tau)),
\end{multline}
where $(u,v):=\int_{\omega}u(x).v(x)\, dx$ and $\<v,w\>_\tau:=\int_\tau^\infty\int_{\omega}v(t,x).w(t,x)\,dx\,dt$.
\par
Using now that
$a+a^*>0$,  $f(v).v\ge-C$ and
the obvious
inequality
\begin{equation}\label{1.8}
|\Dt \phi_T(t)|\le \alpha\phi_T(t),\ \ t\in\R,
\end{equation}
together with the  Cauchy-Schwartz inequality,
we have
\begin{multline}\label{1.9}
\<\eb^2|\Dt u|^2+|\Nx u|^2,\phi_T\>_\tau\le C\eb^2\alpha\<|\Dt
u|\cdot|u|,\phi_T\>_\tau+C\alpha\<|u|^2,\phi_T\>_\tau
\\+C\(1+\<|g|^2,\phi_T\>_\tau+
\phi_T(\tau)\|u_\tau\|_{L^2(\omega)}^2+\eb^2\phi_T(\tau)
\|u_\tau\|_{L^2(\omega)}
\|\Dt u(\tau)\|_{L^2(\omega)}\),
\end{multline}
where
 the constant $C$ is independent of $\eb$, $\tau$, $T$ and $\alpha$
(we also note that all of the integrals in \eqref{1.9} have a sense
since the solution $u$ is assumed to belong to $W^{(1,2),p}_{\eb,b}(\Omega_+^\tau)$).
Estimate \eqref{1.9} implies that, for sufficiently small
(but independent of $\eb$),
 $\alpha>0$
\begin{multline}\label{1.10}
\<\eb^2|\Dt u|^2+|\Nx u|^2,\phi_T\>_\tau\le \\
C_1\(1+\|g\|_{L^2_b(\Omega^\tau_+)}^2+
\phi_T(\tau)\|u_\tau\|_{L^2(\omega)}^2+
\eb^2\phi_T(\tau)\|u_\tau\|_{L^2(\omega)}
\|\Dt u(\tau)\|_{L^2(\omega)}\),
\end{multline}
where the constant $C_1$ is independent of $\eb$, $\tau$ and $T$. We
now set
\begin{equation*}
\Cal L_\eb u:=\eb^2\Dt^2u+\Dx u,\ \bar{\Cal L}_\eb
u:=\eb^2\Dt(\phi_T\Dt u)+\phi_T\Dx u\equiv \phi_T\Cal L_\eb u+\eb^2\phi_T'(t)\Dt u,
\end{equation*}
multiply equation \eqref{1.1} by $\bar{\Cal L}_\eb u$ and integrate
over $\Omega_+^\tau$. Then, integrating by parts, using that
 $\gamma=\gamma^*$, we have
\begin{multline}
\<a\Cal L_\eb u.\Cal L_\eb u,\phi_T\>_\tau-\eb^2\<a\Cal L_\eb u.\Dt u,\phi'_T\>_\tau-\frac{1}2\<\eb^2\gamma\Dt u.\Dt u+\gamma\Nx u.\Nx u,\phi'_T\>_\tau+\\+\frac12\phi_T(\tau)[\eb^2(\gamma\Dt u(\tau),\Dt u(\tau))-(\gamma\Nx u(\tau),\Nx u(\tau))]+\<f'(u)\Dt u.\Dt u,\phi_T\>_\tau+\\+
\<f'(u)\Nx u.\Nx u,\phi_T\>_\tau-\eb^2\phi_T(\tau)(f(u(\tau)),\Dt u(\tau))=\<h.\Cal L_\eb u,\phi_T\>_\tau+\eb^2\<h.\Dt u,\phi_T'\>_\tau
\end{multline}
and using that $a+a^*>0$, $\gamma>0$ and $f'(v)\ge -K$, together with estimate \eqref{1.8},
 after the straightforward estimates, we end up with
\begin{multline}\label{1.11}
\<|\Cal L_\eb u|^2,\phi_T\>_\tau+\eb^2\phi_T(\tau)\|\Dt
u(\tau)\|_{L^2(\omega)}^2
\le C_2\big(\<\eb^2|\Dt u|^2+|\Nx u|^2,\phi_T\>_\tau+\\+
\<|g|^2,\phi_T\>_\tau+\phi_T(\tau)\|\Nx u_\tau\|_{L^2(\omega)}^2+
\eb^2\phi_T(\tau)\|f(u_\tau)\|_{L^2(\omega)}^2\big),
\end{multline}
where the constant $C_2$ is independent of $\eb$, $\tau$ and $T$.
Applying estimate \eqref{1.10} in order to estimate the first term
in the right-hand side of \eqref{1.11}, we obtain, after the obvious
estimates that
\begin{multline}\label{1.12}
\<|\Cal L_\eb u|^2,\phi_T\>_\tau+\eb^2\phi_T(\tau)\|\Dt
u(\tau)\|_{L^2(\omega)}^2+\<\eb^2|\Dt u|^2+|\Nx u|^2,\phi_T\>_\tau\le
\\\le C_3\(1+\|g\|_{L^2_b(\Omega_+^\tau)}^2+
\phi_T(\tau)\|u_\tau\|_{W^{1,2}(\omega)}^2+
\eb^2\phi_T(\tau)\|f(u_\tau)\|_{L^2(\omega)}^2\),
\end{multline}
where the constant $C_3$ is independent of $\eb$, $\tau$ and $T$.
We now claim that
\begin{multline}\label{1.13}
\eb^4\|\Dt^2 u\|_{L^2(\Omega_T)}^2+\|\Dx u\|_{L^2(\Omega_T)}^2\le\\\le
C_4\(\<|\Cal L_\eb u|^2,\phi_T\>_\tau+\eb^2\<|\Dt u|^2+|u|^2,\phi_T\>_\tau+
\eb\phi_T(\tau)\|u_\tau\|_{W^{3/2,2}(\omega)}^2\),
\end{multline}
where $C_4$ is independent of $\eb$, $\tau$ and $T\ge\tau$. Indeed,
let $\varphi(t)\in C^\infty_0(R)$ be a cut-off function such that
$\varphi(t)=1$, for $t\in[0,1]$, and $\varphi(t)=0$, for $t\notin[-1,2]$.
For every $T\ge\tau$, we set $\varphi_T(t):=\varphi(t-T)$ and
$u_T(t):=\varphi_T(t)u(t)$. Then, the last function satisfies the
following equation:
\begin{equation*}
\Cal L_\eb u_T(t)=h_u(t):=\varphi_T(t)\Cal L_\eb u(t)+
2\eb^2\varphi_T'(t)\Dt u(t)+\eb^2\varphi_T''(t)u(t)
\end{equation*}
Applying Lemma \ref{LemA.1} with $p=2$ (see Appendix) to this equation,
we have
\begin{multline*}
\eb^4\|\Dt^2u_T\|_{L^2(\Omega_+^\tau)}^2+\|\Dx u_T\|_{L^2(\Omega_+^\tau)}^2\le
C\(\|h_u\|_{L^2(\Omega_+^\tau)}^2+
\eb\|u_T(\tau)\|_{W^{3/2,2}(\omega)}^2\)\\
\le C'\(\<|\Cal L_\eb u|^2,\phi_T\>_\tau+\eb^2\<|\Dt u|^2+|u|^2,\phi_T\>_\tau+
\eb\phi_T(\tau)\|u_\tau\|_{W^{3/2,2}(\omega)}^2\)
\end{multline*}
and estimate \eqref{1.13} is an immediate corollary of this estimate.
\par
Inserting now estimate \eqref{1.12} into the right-hand side of
\eqref{1.13}
and using the embedding $W^{2(1-1/p),p}(\omega)\subset C(\omega)$ (due to our
choice of the exponent $p$), we have
\begin{equation}\label{1.14}
\eb^4\|\Dt^2 u\|_{L^2(\Omega_T)}^2+\|\Dx u\|_{L^2(\Omega_T)}^2\le
C_5(1+\|g\|_{L^2_b(\Omega_+^\tau)}^2)+
Q(\|u_\tau\|_{V^p_\eb(\omega)})e^{-\alpha(T-\tau)},
\end{equation}
where the constant $C_5$ and the monotonic function $Q$ are
independent of $\eb$, $\tau$ and $T\ge\tau$.
\par
Thus, there only remains to estimate the $L^2$-norm of $\Dt u$. In
order to do so, we rewrite elliptic system \eqref{1.1} in the
following form:
\begin{equation}\label{1.15}
\gamma\Dt u=a\Dx u-f(u)+\tilde h_u(t),\ \ u\DOM=0,\ \ \tilde h_u(t):=\eb^2a\Dt^2u(t)-g(t).
\end{equation}
Equation \eqref{1.15} has the form of a nonlinear reaction-diffusion
system in the bounded domain $\omega$ with the non-autonomous external forces
$\tilde h_u(t)$ belonging to $L^2_b((\tau,\infty)\times\omega)$ (due to
estimate \eqref{1.14}).
Moreover, the nonlinearity $f(u)$ satisfies the quasimonotonicity
assumption $f'(v)\ge-K$.
 Consequently, multiplying \eqref{1.15} by
$\Dx u(t)$, integrating over $x$ and applying the Gronwall's
inequality, we derive  (in a standard way, see e.g. \cite{9}) that
\begin{equation}\label{1.16}
\|u(T)\|_{W^{1,2}(\omega)}^2\le C\|u_\tau\|_{W^{1,2}(\omega)}^2
e^{-\alpha(T-\tau)}+C+\int_\tau^T
e^{-\alpha(T-t)}\|\tilde h_u(t)\|_{L^2(\omega)}^2\,dt,
\end{equation}
where the positive constants $\alpha$ and $C$ are independent of
$\tilde h_u$. Using estimate \eqref{1.14} for estimating the last term in
the right-hand side of \eqref{1.16}, we have
\begin{multline}\label{1.17}
\eb^4\|\Dt^2 u\|_{L^2(\Omega_T)}^2+\|u\|_{L^2((T,T+1),W^{2,2}(\omega))}^2+
\|u\|_{L^\infty((T,T+1),W^{1,2}(\omega))}^2
\le\\\le
C_6(1+\|g\|_{L^2_b(\Omega_+^\tau)}^2)+
Q(\|u_\tau\|_{V^p_\eb(\omega)})e^{-\alpha(T-\tau)},
\end{multline}
where the constant $C_6$ and the monotonic function $Q$ are
independent of $\eb$, $\tau$ and~$T$.
\par
We now recall that, according to the embedding theorem, see e.g.
\cite{17} and \cite{25}
\begin{equation}\label{1.18}
\|u\|_{L^{2q_{max}}(\Omega_T)}\le
C\(\|u\|_{L^\infty((T,T+1),W^{1,2}(\omega))}+
\|u\|_{L^2((T,T+1),W^{2,2}(\omega))}\),
\end{equation}
where the exponent $q_{max}$ is the same as in \eqref{1.2}. Estimates
\eqref{1.17} and \eqref{1.18}, together with the growth restriction
\eqref{1.2}, imply that
\begin{equation}\label{1.19}
\|f(u)\|_{L^2(\Omega_T)}\le
Q_1(\|u_\tau\|_{V^p_\eb(\omega)})e^{-\alpha(T-\tau)}+
Q_1(\|g\|_{L^2_b(\Omega_+^\tau)}),
\end{equation}
where the constant $\alpha>0$ and the monotonic function $Q_1$ are
independent of $\eb$, $\tau$ and $T\ge\tau$. Expressing now $\Dt u$
from equation \eqref{1.1} and using estimates \eqref{1.17} and
\eqref{1.19}, we obtain the desired estimate for $\Dt u$ and finish
the proof of Lemma \ref{Lem1.1}.
\end{proof}
We are now ready to prove estimate \eqref{1.6}, for $p>2$. We consider only the more complicated case $n\ge3$ and rest the simpler case $n\le 2$ to the reader.
\par
 Remind that the nonlinearity $f(u)$ satisfies  growth
restriction given by the third formula of \eqref{1.2} where the exponent $q$ is {\it strictly}
less than $q_{max}$ and, due to the embedding theorem for the anisotropic spaces
$$
W_{\eb}^{(1,2),2}(\Omega_T)\subset W^{(1,2),2}_{0}(\Omega_T)\subset L^{q_{max}}_b(\Omega_T),
$$
see \cite{25},
and,
consequently, due to \eqref{1.7},
estimate \eqref{1.19} can be improved as
follows:
\begin{multline}\label{1.1$9'$}
\|f(u)\|_{L^{2+\delta_0}(\Omega_T)}\le C(1+\|u\|_{L^{q_{max}}(\Omega_T)})^q\le\\\le C_1(1+\|u\|_{W_{\eb}^{(1,2),2}(\Omega_T)})^q\le
Q(\|u_\tau\|_{V^p_\eb(\omega)})e^{-\alpha(T-\tau)}+
Q(\|g\|_{L^2_b(\Omega_+^\tau)}),
\end{multline}
where
$\delta_0:=\frac{2(q_{max}-q)}q>0$ and the constant $\alpha>0$ and the
monotone function $Q$ are independent of $\eb$, $\tau$ and $T$. We now rewrite
equation \eqref{1.1} in the following way:
\begin{equation}\label{1.20}
a(\eb^2\Dt^2 u+\Dx u)-\gamma\Dt u=H_u(t):=g(t)+f(u(t)),\ u\DOM=0,\
u\big|_{t=\tau}=u_\tau
\end{equation}
and apply the maximal elliptic regularity estimate of
Corollary \ref{CorA.1} to this linear equation which reads
$$
\|u\|_{W^{(1,2),2+\delta_0}_\eb(\Omega_T)}^{2+\delta_0}\le C\|u_\tau\|_{V_\eb^{2+\delta_0}(\omega)}^{2+\delta_0}e^{-\alpha(T-\tau)}+C\int_\tau^\infty e^{-\alpha(T-t)}\|H_u(t)\|_{L^{2+\delta_0}(\omega)}^{2+\delta_0}\,dt,
$$
see Corollary \ref{CorA.1} of Appendix.
\par
 Then, using \eqref{1.1$9'$} in order to estimate the integral into the right-hand side of this formulae, we get
\begin{equation}\label{1.21}
\|u\|_{W^{(1,2),2+\delta_0}_\eb(\Omega_T)}\le
Q_1(\|u_\tau\|_{V^{p}_\eb(\omega)})e^{-\alpha(T-\tau)}+
Q_1(\|g\|_{L^{2+\delta_0}_b(\Omega_+^\tau)}),
\end{equation}
where the constant $C$ and the function $Q_1$ are independent of $\eb$,
$\tau$ and $T$. We now recall that
$$
W^{(1,2),s}_\eb(\Omega_T)\subset W^{(1,2),s}_0(\Omega_T)
\equiv (W^{(1,2),s}(\Omega_T)\cap \{u\DOM=0\})
$$
and, due to the embedding theorem for anisotropic Sobolev spaces
\begin{equation}\label{1.22}
W^{(1,2),s}(\Omega_T)\subset L^{r(s)}(\Omega_T),\
\text{ where }\ \frac1{r(s)}=\frac1s-\frac2{n+2},
\end{equation}
see \cite{25}. Consequently, analogously to \eqref{1.1$9'$}, we have
\begin{equation}\label{1.23}
\|f(u)\|_{L^{2+\delta_1}(\Omega_T)}\le
Q_2(\|u_\tau\|_{V^p_\eb(\omega)})e^{-\alpha(T-\tau)}+
Q_2(\|g\|_{L^{2+\delta_0}_b(\Omega_+^\tau)}),
\end{equation}
where $\alpha>0$ and $Q_2$ are independent of $\eb$, $\tau$ and $T$ and
\begin{equation}\label{1.24}
2+\delta_1:=\frac{r(2+\delta_0)}q>\frac{r(2+\delta_0)}{q_{max}}=(2+\delta_0)
\frac{n-2}{n-2-2\delta_0}>2+\delta_0.
\end{equation}
Iterating the above procedure, we finally derive estimates
\eqref{1.21} and \eqref{1.23} with the exponent
 $2+\delta_l\equiv p$. Indeed, formulae \eqref{1.22} and \eqref{1.24}
guarantee that the number $l$ of the iterations will be finite. Thus,
estimate \eqref{1.6} is proved.
\par
In order to finish the proof of Theorem \ref{Th1.1}, there remains to note
that the existence of a solution $u\in
W^{(1,2),p}_{\eb,b}(\Omega_+^\tau)$ of problem \eqref{1.1} can be
proved in a standard way based on a priori estimate \eqref{1.6}.
Indeed, for instance, the existence of a solution of the analogue
of \eqref{1.1} in a finite cylinder
$\Omega_{\tau,N}:=(\tau,\tau+N)\times\Omega$ can be proved using
the Leray-Schauder fixed point principle and the existence of a
solution $u$ in the infinite cylinder $\Omega_+^\tau$ can be
obtained then by passing to the limit $N\to\infty$, see e.g.
\cite{26,27} for the details. Theorem \ref{Th1.1} is proved.
\end{proof}
\begin{remark}\label{Rem1.1} We note that estimate \eqref{1.6} is valid
for every $p\ge2$ although we have formally proved it only for
$p>p_{min}$. Indeed, we have used the last assumption only in order to
estimate the term $\eb^2\phi(\tau)\|f(u(\tau)\|_{L^2(\omega)}^2$ in
\eqref{1.11} which appears after applying the Schwartz inequality
to the term $\eb^2\phi(\tau)\(\Dt u(\tau),f(u(\tau))\)_{L^2(\omega)}$.
 But the growth
restriction of \eqref{1.2}, Lemma \ref{LemA.1} and the appropriate
interpolation inequality
 allow to estimate
this term in more accurate
way:
$$
 |\eb^2\(\Dt u(\tau),f(u(\tau))\)_{L^2(\omega)}|\le
 \mu\|u\|_{W^{(1,2),2}_\eb(\Omega_\tau)}^2+
Q_\mu(\|u_\tau\|_{V^2_\eb(\omega)}),
$$
where the parameter $\mu$ can be arbitrarily small and a function
$Q_\mu$ depends on $\mu$, but is independent of $\eb$ (see
\cite{23} for the details).
 Inserting this
estimate to the right-hand side of \eqref{1.11}, we can easily derive
\eqref{1.6} with $p=2$.
\end{remark}
\begin{remark}\label{Rem1.2} If we need not estimate \eqref{1.6} to be
uniform with respect to $\eb\to0$, it is possible to relax the
growth restriction \eqref{1.2}(3) till $q<q_{max}':=\frac{n+1}{n-3}$.
Indeed, in this case, it is sufficient to use the embedding
$W^{2,2}(\Omega_0)\subset L^{2q_{max}'}(\Omega_0)$ instead of
\eqref{1.18}
in the proof of Theorem \ref{Th1.1}.
\end{remark}

\begin{corollary}\label{Cor1.1} Let the assumptions of Theorem \ref{Th1.1} hold and
let $u\in W^{(1,2),p}_{\eb,b}(\Omega_+^\tau)$ be a solution of
\eqref{1.1}. Then, the following estimate holds:
\begin{equation}\label{1.25}
\|u(t)\|_{V_\eb^p(\omega)}\le Q(\|u_\tau\|_{V^p_\eb(\omega)})
e^{-\alpha(t-\tau)}+Q(\|g\|_{L^p_b(\Omega_+^\tau)}),
\end{equation}
where the constant $\alpha>0$ and the function $Q$ are indepndent of
$\eb$, $\tau$, $t\ge\tau$ and $u$.
\end{corollary}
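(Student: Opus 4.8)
The plan is to deduce Corollary \ref{Cor1.1} from the global estimate \eqref{1.6} of Theorem \ref{Th1.1} by a standard ``trace inequality'' argument, i.e. by bounding the $V^p_\eb(\omega)$-norm of the time-slice $u(t)$ in terms of the $W^{(1,2),p}_{\eb,b}$-norm of $u$ on a unit cylinder around $t$. The key point is that the trace operator $w\mapsto w|_{s=T}$ is bounded, \emph{uniformly in} $\eb\in[0,1]$, from $W^{(1,2),p}_\eb(\Omega_T)$ into $V^p_\eb(\omega)$; this is exactly how the $\eb$-weighted norms in \eqref{1.4} and \eqref{1.5} were designed (and it is recorded, together with the companion maximal regularity estimates, in the Appendix — cf.\ Lemma \ref{LemA.1} and Corollary \ref{CorA.1}).

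First I would fix $t\ge\tau$. If $t\ge\tau+1$, set $T:=t-1$, so that $t\in[T,T+1]$ and $T\ge\tau$; then the uniform trace theorem gives
\begin{equation*}
\|u(t)\|_{V^p_\eb(\omega)}\le C\,\|u\|_{W^{(1,2),p}_\eb(\Omega_T)},
\end{equation*}
with $C$ independent of $\eb$, $\tau$, $t$. Applying \eqref{1.6} on $\Omega_T$ and using $e^{-\alpha(T-\tau)}=e^{\alpha}e^{-\alpha(t-\tau)}$ (absorbing the harmless factor $e^{\alpha}$ into the monotone function $Q$) yields \eqref{1.25} in this range. For the remaining case $\tau\le t\le\tau+1$ one cannot shift $T$ below $\tau$, so instead take $T:=\tau$ and use the trace estimate on $\Omega_\tau$ together with \eqref{1.6} at $T=\tau$; here $e^{-\alpha(T-\tau)}=1$ and also $e^{-\alpha(t-\tau)}\ge e^{-\alpha}$, so the bound $\|u(t)\|_{V^p_\eb}\le Q(\|u_\tau\|_{V^p_\eb})+Q(\|g\|_{L^p_b})$ can be rewritten, after enlarging $Q$ by the factor $e^{\alpha}$, in the exponentially decaying form \eqref{1.25}. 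Combining the two cases and taking the larger of the two monotone functions $Q$ gives the claim with constants independent of all the stated parameters.

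The only mild subtlety — and the single place one must be careful — is the uniformity in $\eb$ of the trace inequality: for $\eb>0$ the space $W^{(1,2),p}_\eb(\Omega_T)$ is just $W^{2,p}(\Omega_T)$ with an $\eb$-dependent norm, and its trace at a fixed time is the classical Sobolev trace $W^{2-1/p,p}(\omega)$, whereas the definition \eqref{1.5} of $V^p_\eb$ weights precisely that piece by $\eb^{1/p}$; for $\eb=0$ one is in the anisotropic parabolic scale and the trace is $W^{2(1-1/p),p}(\omega)$. That the constant in the trace embedding does not blow up as $\eb\to0$ is not obvious from the classical (non-anisotropic) trace theorem, but it is exactly what the Appendix estimates are set up to deliver (the remark after Definition \ref{Def1.1} says as much). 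Granting that, the proof is a two-line consequence of \eqref{1.6}; I do not expect any genuine obstacle beyond invoking the correct Appendix statement.
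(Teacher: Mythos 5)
Your argument is exactly the paper's: the authors dispose of Corollary \ref{Cor1.1} in one sentence, citing estimate \eqref{1.6} together with the fact (from the Appendix, cf.\ \eqref{A.23} and Lemma \ref{LemA.1}) that $V^p_\eb(\omega)$ is the uniform-in-$\eb$ trace space for $W^{(1,2),p}_{\eb,b}(\Omega_+^\tau)$, which is precisely the trace inequality you invoke. Your case split at $t=\tau+1$ is harmless but avoidable — taking $T:=t$ in \eqref{1.6} and using the trace at the left endpoint of $\Omega_t$ gives \eqref{1.25} in one step for all $t\ge\tau$.
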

Indeed, \eqref{1.25} is an immediate corollary of \eqref{1.6} and
the fact that $V^p_\eb(\omega)$ is the  uniform (with respect to
$\eb$) trace space of functions belonging to
$W^{(1,2),p}_{\eb,b}(\Omega_+^\tau)$, see Apendix.

\begin{corollary}\label{Cor1.2} Let the assumptions of Theorem \ref{Th1.1} hold and
let, in addition, the external forces $g$ belong to
$L^{p_1}_b(\Omega_+^\tau)$, for some $p_1>p$. Then, every solution
$u\in W^{(1,2),p}_{\eb,b}(\Omega_+^\tau)$
of problem \eqref{1.1} satisfies the following estimate:
\begin{equation}\label{1.26}
\|u\|_{W^{(1,2),p_1}_\eb(\Omega_T)}\le Q(\|u_\tau\|_{V^p_\eb(\omega)})
e^{-\alpha(T-\tau)}+Q(\|g\|_{L^{p_1}_b(\Omega_+^\tau)}),\ \
T\ge\tau+1,
\end{equation}
where the constant $\alpha>0$ and the function $Q$ are independent of
$\eb$, $\tau$, $T$ and $u$.
\end{corollary}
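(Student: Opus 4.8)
The plan is to view \eqref{1.1} as a \emph{linear} elliptic equation for $u$,
\[
a(\eb^2\Dt^2u+\Dx u)-\gamma\Dt u=H_u(t):=g(t)+f(u(t)),\qquad u\DOM=0,
\]
and to upgrade the regularity of $u$ from the exponent $p$ to $p_1$ by the uniform (in $\eb$) elliptic maximal regularity estimate of Corollary \ref{CorA.1}, combined with a cut-off in $t$ which removes the dependence on norms of $u_\tau$ stronger than $\|u_\tau\|_{V^p_\eb(\omega)}$ and is responsible for the restriction $T\ge\tau+1$. The key simplification with respect to the bootstrap \eqref{1.20}--\eqref{1.24} is that, since $p>p_{min}$, the anisotropic embedding $W^{(1,2),p}_\eb(\Omega_T)\subset C(\overline{\Omega_T})$ holds \emph{uniformly in} $\eb\in[0,1]$ (cf. Definition \ref{Def1.sol} and the Appendix); hence, by Theorem \ref{Th1.1}, $u\in C_b(\Omega_+^\tau)$ with $\|u\|_{C(\overline{\Omega_T})}\le Q(\|u_\tau\|_{V^p_\eb(\omega)})e^{-\alpha(T-\tau)}+Q(\|g\|_{L^p_b(\Omega_+^\tau)})$, so that $f(u)\in L^\infty_b(\Omega_+^\tau)$ and therefore $H_u\in L^{p_1}_b(\Omega_+^\tau)$ with an estimate of the same form, without any iteration on the nonlinearity.

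Next I fix a smooth $\psi$ on $\R$ with $\psi\equiv0$ for $t\le\tau+\tfrac12$ and $\psi\equiv1$ for $t\ge\tau+1$ and set $v:=\psi u$, so that $v$ and $\Dt v$ vanish at $t=\tau$ and $v$ solves $a(\eb^2\Dt^2v+\Dx v)-\gamma\Dt v=\psi H_u+R_u$, where the commutator $R_u=2\eb^2a\psi'\Dt u+\eb^2a\psi''u-\gamma\psi'u$ is supported in $[\tau+\tfrac12,\tau+1]\times\omega$. Using $\psi'\Dt u=\Dt(\psi'u)-\psi''u$ I rewrite $R_u=2\eb^2a\Dt(\psi'u)-\eb^2a\psi''u-\gamma\psi'u$; since $u\in C_b(\Omega_+^\tau)$, all the terms not under $\Dt$ lie in $L^\infty_b\subset L^{p_1}_b$, and the remaining term is $\eb^2a\Dt G$ with $G:=2\psi'u\in L^\infty_b\subset L^{p_1}_b$ of bounded support in $t$. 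Applying the maximal regularity estimate of Corollary \ref{CorA.1} to the linear equation for $v$ --- in the form admitting a right-hand side of the type $F+\eb^2\Dt G$, which is exactly what is needed for such cut-off arguments --- and using $H_u\in L^{p_1}_b$ together with \eqref{1.6} to control $R_u$, I obtain $v\in W^{(1,2),p_1}_{\eb,b}(\Omega_+^\tau)$ with
\[
\|v\|_{W^{(1,2),p_1}_{\eb,b}(\Omega_+^\tau)}\le Q(\|u_\tau\|_{V^p_\eb(\omega)})+Q(\|g\|_{L^{p_1}_b(\Omega_+^\tau)}),
\]
which, since $v\equiv u$ for $t\ge\tau+1$ and after restoring the exponential weight in $T$ (exactly as in Lemma \ref{Lem1.1}), gives \eqref{1.26}. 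If one prefers not to invoke $u\in C_b$, the same conclusion follows by iterating the anisotropic embedding \eqref{1.22} a finite number of times, precisely as in \eqref{1.20}--\eqref{1.24}.

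The only genuinely delicate points are keeping every constant independent of $\eb\in[0,1]$ and handling the term $\eb^2\Dt G$ coming from the cut-off. The $\eb$-weighted norms of Definition \ref{Def1.1} and the corresponding maximal regularity estimates are designed precisely so that the constants do not degenerate as $\eb\to0$; the $\eb^2\Dt G$ term is harmless because the $t$-derivative has been moved onto the factor $\psi'u$, which already enjoys the regularity needed, so that it is absorbed by the $\eb^2\Dt^2$-part of the operator (for $\eb=0$ this term simply disappears and $R_u$ reduces to $-\gamma\psi'u\in L^\infty_b$). Existence of a solution is not an issue here, since $u$ is an already given solution in $W^{(1,2),p}_{\eb,b}(\Omega_+^\tau)$ and we only propagate its regularity.
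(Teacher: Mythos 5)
Your overall strategy is the paper's: since $p>p_{min}$, the uniform embedding $W^{(1,2),p}_\eb(\Omega_T)\subset C(\Omega_T)$ together with \eqref{1.6} gives exactly the paper's estimate \eqref{1.27}, so that $H_u:=g+f(u)\in L^{p_1}_b$ with a bound of the required form and no iteration on the nonlinearity is needed; one then applies a maximal regularity estimate localized in $t$ (whence the restriction $T\ge\tau+1$) to the linear equation \eqref{1.20} at the exponent $p_1$. The first half of your argument coincides with the paper's, which at this point simply cites the interior regularity estimate \eqref{A.28} of Corollary \ref{CorA.2} with $p$ replaced by $p_1$, whose right-hand side contains only $\|H_u\|_{L^{p_1}}$ and $\|u\|_{L^2}$.

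The gap is in your hand-rolled localization, namely in the treatment of the commutator term $2\eb^2a\psi'\Dt u$: estimate \eqref{1.6} controls $\Dt u$ only in $L^{p}$, not in $L^{p_1}$, and your fix --- rewriting it as $\eb^2\Dt(2a\psi'u)$ modulo bounded terms and invoking Corollary \ref{CorA.1} ``in the form admitting a right-hand side $F+\eb^2\Dt G$'' --- rests on an estimate that is neither stated in the paper nor true in the strength you need. Indeed, on the model problem \eqref{A.24} a right-hand side $\eb^2\Dt G$ with $G\in L^{p_1}$ produces for the components $\eb^2\Dt^2v$ and $\Dx v$ of the solution the Fourier multipliers $\eb^2\lambda^2\cdot\eb^2i\lambda/D$ and $|\xi'|^2\cdot\eb^2i\lambda/D$, where $|D|\asymp \eb^2\lambda^2+|\xi'|^2+1+|\lambda|$; taking $\xi'=0$ and $|\lambda|\ge\eb^{-2}$ the first of these has modulus of order $\eb^2|\lambda|\to\infty$, so it is not an $L^{p_1}$-multiplier. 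One therefore recovers only $\|\Dt v\|_{L^{p_1}}$ and $\|v\|_{L^{p_1}}$ from such a right-hand side, not the full $W^{(1,2),p_1}_\eb$-norm required in \eqref{1.26}; the remark that the term ``is absorbed by the $\eb^2\Dt^2$-part of the operator'' is precisely what fails. The correct route is either to cite \eqref{A.28} directly, as the paper does (there the commutator is reduced to $\|u\|_{L^2}$ by the standard nested cut-off iteration at a \emph{fixed} exponent), or to bound $\eb^2\|\psi'\Dt u\|_{L^{p_1}}$ directly by interpolating between $\eb^2\|\Dt^2u\|_{L^{p}}$ and $\|u\|_{L^\infty}$ so that the explicit prefactor $\eb^2$ compensates the $\eb^{-2}$ loss; the divergence-form device as you state it does not close the argument.
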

Indeed, since $W^{(1,2),p}(\Omega_T)\subset C(\Omega_T)$ (due to our
choice of the exponent $p$) then, estimate \eqref{1.6} implies that
\begin{equation}\label{1.27}
\|f(u)\|_{L^\infty(\Omega_T)}\le Q(\|u_\tau\|_{V^p_\eb(\omega)})
e^{-\alpha(T-\tau)}+Q(\|g\|_{L^p(\Omega_+^\tau)}),
\end{equation}
where the constant $\alpha>0$ and the function $q$ are independent of
$\eb$, $\tau$, $T$ and $u$. Rewriting now equation \eqref{1.1} in the
form of \eqref{1.20} and applying the uniform (with respect to $\eb$)
interior $L^{p_1}$-regularity estimate to this equation (see
Corollary \ref{CorA.2} and estimate \eqref{A.28}), we derive estimate
\eqref{1.26}.


\section{Uniqueness of the solutions}\label{s2}

In this section, we prove that the solution $u(t)$ of problem
\eqref{1.1} which is constructed in Theorem \ref{Th1.1} is unique if $\eb>0$
is small enough. Moreover, we also verify the differentiability of
that solution with respect to  the initial data $u_\tau\in
V^p_\eb(\omega)$ in the corresponding functional spaces. We start with
the following theorem.
\begin{theorem}\label{Th2.1} Let the assumptions of Theorem \ref{Th1.1} hold and
let, in addition, $\eb\le\eb_0:=\eb_0(a,f,\gamma)$ is small
enough. Then,  for every two solutions
$u_1(t)$ and $u_2(t)$ of problem \eqref{1.1}, the following estimate holds:
\begin{equation}\label{2.1}
\|u_1-u_2\|_{W^{(1,2),p}_\eb(\Omega_T)}\le Ce^{\Lambda_0(T-\tau)}
\|u_1(\tau)-u_2(\tau)\|_{V^p_\eb(\omega)},
\end{equation}
where the constant $\Lambda_0$ is independent of
$\eb\le\eb_0$, $\tau\in\R$, $T\ge\tau$, $u_1$ and $u_2$
and the constant $C$ depends on $\|u_i(\tau)\|_{V^p_\eb(\omega)}$,
but is independent of $\eb$, $\tau$ and $T$. In
particular, the solution of \eqref{1.1} is unique if $\eb\le\eb_0$.
\end{theorem}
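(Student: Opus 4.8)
The plan is to derive an energy-type estimate for the difference $w:=u_1-u_2$, which satisfies the linear (in $w$) elliptic system
\begin{equation}\label{pprop.1}
a(\eb^2\Dt^2 w+\Dx w)-\gamma\Dt w=l(t)w,\qquad w\DOM=0,\ w\big|_{t=\tau}=u_1(\tau)-u_2(\tau),
\end{equation}
where $l(t):=\int_0^1 f'(su_1(t)+(1-s)u_2(t))\,ds$ is a bounded matrix-valued potential; its $L^\infty$-norm on $\Omega_+^\tau$ is controlled through \eqref{1.27} by $Q(\|u_i(\tau)\|_{V^p_\eb})+Q(\|g\|_{L^p_b})$, which explains the dependence of the constant $C$ in \eqref{2.1} on the initial data (but not on $\eb$, $\tau$, $T$). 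The key point is that $w$ solves a \emph{linear} elliptic problem with bounded coefficients, so the uniform-in-$\eb$ a priori machinery of the Appendix applies directly to it.

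First I would establish the weighted $L^2$-type differential inequality. Multiply \eqref{pprop.1} by $\psi_T(t)w(t)$, where $\psi_T(t):=e^{\mu|t-T|}$ for a suitably chosen (small, $\eb$-independent) $\mu>0$ — the \emph{growing} weight is what produces the exponential factor $e^{\Lambda_0(T-\tau)}$ rather than a decaying one — integrate over $\Omega_+^\tau$, integrate by parts and use $a+a^*>0$, $\gamma=\gamma^*>0$, $|\psi_T'|\le\mu\psi_T$, together with the bound on $l(t)$. Exactly as in the proof of Lemma \ref{Lem1.1}, one then multiplies \eqref{pprop.1} by $\bar{\Cal L}_\eb w$ to recover control of $\<|\Cal L_\eb w|^2,\psi_T\>_\tau$ and of $\eb^2\psi_T(\tau)\|\Dt w(\tau)\|_{L^2(\omega)}^2$, and applies Lemma \ref{LemA.1} with $\varphi_T$-cutoffs to pass from $\Cal L_\eb w$ to the full $W^{(1,2),2}_\eb$-norm. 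This yields \eqref{2.1} with $p=2$ and some $\Lambda_0$ proportional to $\|l\|_{L^\infty}$. The bootstrap to $p>2$ is then the same finite iteration as in Theorem \ref{Th1.1}: since $w$ (hence $l(t)w$) lies in better and better $L^s_b$ spaces via \eqref{1.22}, apply the maximal regularity estimate of Corollary \ref{CorA.1} to the linear equation $a(\eb^2\Dt^2 w+\Dx w)-\gamma\Dt w=l(t)w$ repeatedly, raising the exponent until it reaches $p$; the exponential factor $e^{\Lambda_0(T-\tau)}$ is preserved (with a possibly larger $\Lambda_0$) at each step, exactly as the $e^{-\alpha(T-\tau)}$ factor was preserved there.

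The main obstacle — and the only place where the smallness $\eb\le\eb_0$ is genuinely needed — is that the growing weight $\psi_T$ does \emph{not} make the first-order term work in our favour the way the decaying weight $\phi_T$ did in Section \ref{s1}. When one multiplies by $\psi_T w$, the term $\eb^2\<a\Dt w.w,\psi_T'\>_\tau$ and the corresponding boundary term $\eb^2(a\Dt w(\tau).w(\tau))$ now carry the \emph{wrong} sign and must be absorbed; with the decaying weight in Lemma \ref{Lem1.1} these were harmless because of the sign of $\phi_T'$, but here they are competing against the good terms $\<\eb^2|\Dt w|^2+|\Nx w|^2,\psi_T\>_\tau$. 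The resolution is that the offending contributions all come with a factor $\eb^2$ (or $\eb$, after the trace estimate $\eb^{1/2}\|w(\tau)\|_{W^{3/2,2}(\omega)}$), so for $\eb$ small enough they are dominated by the good terms plus a controlled multiple of $\<|w|^2,\psi_T\>_\tau$, which is then swept into the Gronwall-type argument driving $\Lambda_0$; choosing $\eb_0=\eb_0(a,f,\gamma)$ small makes the required absorption inequalities hold with $\eb$-independent constants. Uniqueness is then immediate: taking $u_1(\tau)=u_2(\tau)$ in \eqref{2.1} forces $u_1\equiv u_2$ on every $\Omega_T$, hence on $\Omega_+^\tau$.
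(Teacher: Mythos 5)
Your setup---linearizing to the equation for $w=u_1-u_2$ with the potential $l(t)$ satisfying \eqref{2.3}, and bootstrapping from an $L^2$-type bound to $p>2$ via the maximal regularity estimate of Corollary \ref{CorA.1}---matches the paper. The gap is in the core weighted energy estimate. First, the growing weight $\psi_T(t)=e^{\mu|t-T|}$ cannot be paired with $|w|^2$ over the half-infinite cylinder: $w$ is only known to be \emph{bounded} as $t\to+\infty$, so every quantity of the form $\<|w|^2,\psi_T\>_\tau$ in your inequality is in general $+\infty$; and retreating to a finite interval $[\tau,T]$ reintroduces the uncontrolled boundary term $\eb^2(a\Dt w(T).w(T))$ that the weighted infinite-interval technique exists to avoid (there is no Gronwall argument for an elliptic two-point problem with no data prescribed at the right end). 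Second, even formally, the decisive obstruction is not the $\eb^2$-weighted cross terms you single out, but the term $K\<|w|^2,\psi_T\>_\tau$ coming from $l\ge-K$: the constant $K$ is fixed by $f$ and may exceed the constant in the Friedrichs inequality, so it cannot be absorbed into $\<a_+\Nx w.\Nx w,\psi_T\>_\tau$, and your proposal supplies no other source of coercivity against it.

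The paper's device is different: one substitutes $\theta(t):=e^{-\Lambda_0 t}w(t)$, keeps the \emph{decaying} weight $\phi_T$ (so all integrals converge, since $\theta$ decays), and observes that the conjugation shifts the zeroth-order coefficient to $\Lambda_0\gamma-\eb^2\Lambda_0^2a+l(t)$, which is coercive precisely under condition \eqref{2.5}; the factor $e^{\Lambda_0(T-\tau)}$ in \eqref{2.1} comes from undoing the conjugation, not from a growing weight. This is also the only place where $\eb\le\eb_0$ is genuinely needed: the left-hand side of \eqref{2.5} tends to $\Lambda_0\gamma-K$ as $\eb\to0$, so one fixes $\Lambda_0$ large depending only on $K$, $a$, $\gamma$ and then shrinks $\eb_0$ accordingly, whereas the $\eb^2$-weighted cross and boundary terms are handled exactly as in Lemma \ref{Lem1.1} without any smallness of $\eb$. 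Finally, your assertion that $\Lambda_0$ is proportional to $\|l\|_{L^\infty}$ would make $\Lambda_0$ depend on $u_1$ and $u_2$, contradicting the statement of the theorem; only the solution-independent lower bound $l\ge-K$ enters the choice of $\Lambda_0$, while $M=\|l\|_{L^\infty}$ affects only the constants $C$ and $\alpha$.
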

\begin{proof} We set $v(t):=u_1(t)-u_2(t)$. Then, this function
satisfies the following equation:
\begin{equation}\label{2.2}
a(\eb^2\Dt^2 v+\Dx v)-\gamma\Dt v-l(t)v=0,\ \ v\DOM=0,\ \
v\big|_{t=\tau}=u_1(\tau)-u_2(\tau),
\end{equation}
where $l(t)=l(t,x):=\int^1_0f'(su_1(t)+(1-s)u_2(t))\,dt$. Moreover, due to
the second assumption of \eqref{1.2}, estimate \eqref{1.6} and the embedding
$W^{(1,2),p}(\Omega_T)\subset C(\Omega_T)$, we have
\begin{equation}\label{2.3}
l(t,x)\ge-K,\ \ \|l(t,x)\|_{L^\infty(\Omega_+^\tau)}\le M,
\end{equation}
where the constant $K$ is defined in \eqref{1.2} and the constant
$M$ depends on the norms $\|u_i(\tau)\|_{V^p_\eb(\omega)}$, $i=1,2$,
and $\|g\|_{L^p_b(\Omega_+^\tau)}$,
 but is
independent of $\eb$ and $\tau$. It is however convenient to consider
more general (than \eqref{2.2}) problem
\begin{equation}\label{2.4}
a(\eb^2\Dt^2w+\Dx w)-\gamma\Dt w-l(t)w=h(t), \ w\DOM=0,\ \
 w\big|_{t=\tau}=w_\tau,
\end{equation}
where the given matrix-valued function $l(t)$ satisfies \eqref{2.3}
and $h(t)=h(t,x)$ are given external forces.
\begin{lemma}\label{Lem2.1} Let $\Lambda_0$ be a nonnegative number which
satisfies the following condition:
\begin{equation}\label{2.5}
\Lambda_0\gamma-\eb^2\Lambda_0^2(a_+-2a_-(a_+)^{-1}a_-)-K\ge 0,
\end{equation}
where $a_+:=1/2(a+a^*)$ and $a_-=1/2(a-a^*)$. Then, for every
$w_\tau\subset V^p_\eb(\omega)$ and every external forces $h$
satisfying
\begin{equation}\label{2.6}
e^{-\Lambda_0 t}h(t)\in L^p_b(\Omega_+^\tau),
\end{equation}
problem \eqref{2.4} has a unique solution $w(t)$ belonging to the
class
\begin{equation}\label{2.7}
e^{-\Lambda_0 t}w(t)\in W^{(1,2),p}_{\eb,b}(\Omega_+^\tau)
\end{equation}
and the following estimate holds:
\begin{multline}\label{2.8}
\|w\|_{W^{(1,2),p}_\eb(\Omega_T)}^p\le C\|w_\tau\|_{V^p_\eb(\omega)}^p
e^{p(\Lambda_0-\alpha)(T-\tau)}+\\+C\int_\tau^\infty
 e^{-p\alpha|T-t|+p\Lambda_0(T-t)}\|h(t)\|_{L^p(\omega)}^p\,dt,
\end{multline}
where the positive constants $\alpha$ and $C$ depend on $M$ and
$\Lambda_0$, but are independent of $\eb$, $\tau$ and $T\ge\tau$.
\end{lemma}
\begin{proof} We first note that, due to the fact that
$V^p_\eb(\omega)$ is the uniform (with respect to $\eb$) trace space
for functions belonging to $W^{(1,2),p}_\eb(\Omega_\tau)$, it is
sufficient to verify Lemma \ref{Lem2.1} for the case $w_\tau=0$ only. In order
to do so, we set $\theta(t):=e^{-\Lambda_0t}w(t)$. Then, this function
belongs to $W^{(1,2),p}_{\eb,b}(\Omega_+^\tau)$ (due to assumption
 \eqref{2.7}) and satisfies the following equation:
\begin{multline}\label{2.9}
a(\eb^2\Dt \theta+\Dx\theta)-(\gamma-2\eb^2\Lambda_0
a)\Dt\theta-\\- (\Lambda_0\gamma-\eb^2\Lambda_0^2a+l(t))\theta=
\tilde h(t):=e^{-\Lambda_0t}h(t), \
\theta\DOM=0,\;\theta\big|_{t=\tau}=0.
\end{multline}
Multiplying now this equation by $\phi_T(t)\theta(t)$ (where the
weight function $\phi_T(t):=e^{-\alpha|T-t|}$) and integrating over
$\Omega_+^\tau$, we obtain after the standard transformations
(integrating by parts and using that $\gamma=\gamma^*$, $l(t)\ge-K$
and estimate \eqref{1.8}) that
\begin{multline}\label{2.10}
\eb^2\<a_+\Dt \theta.\Dt\theta,\phi_T\>_\tau+
\<a_+\Nx \theta.\Nx\theta,\phi_T\>_\tau+
\<(\Lambda_0\gamma-\eb^2\Lambda_0^2a_+-K)v.v,\phi_T\>_\tau
\\\le|\<\tilde h,\phi_T\theta\>_\tau|+
C\eb^2\alpha\<|\Dt \theta|^2+|\theta|^2,\phi_T\>_\tau+
2\eb^2\Lambda_0|\<a_-\Dt\theta.\theta,\phi_T\>_\tau|,
\end{multline}
where the constant $C$ depends only on $a$ and $\gamma$
and $\<\cdot,\cdot\>_\tau$ stands for the inner product in
$L^2(\Omega^\tau_+)$. Estimating
the last term in the right-hand side of \eqref{2.10} as follows:
$$
2\eb^2\Lambda_0|a_-\Dt \theta.\theta|\le
1/2\eb^2\Lambda_0^2a_+\Dt\theta.\Dt\theta-2\eb^2a_-(a_+)^{-1}a_-\theta.\theta,
$$
fixing the parameter $\alpha>0$ to be small enough
and using the Friedrichs and Schwartz inequalities,
we have
\begin{multline*}
\eb^2\<a_+\Dt \theta.\Dt\theta,\phi_T\>_\tau+
\<a_+\Nx \theta.\Nx\theta,\phi_T\>_\tau+\\+
4\<(\Lambda_0\gamma-\eb^2\Lambda_0^2(a_+-2a_-(a_+)^{-1}a_-)-K)v.v,\phi_T\>_\tau
\le C_1\<|\tilde h|^2,\phi_T\>_\tau.
\end{multline*}
Using assumption \eqref{2.5}, positivity of $a_+$ and the obvious
inequality $\phi_T(t)\ge e^{-\alpha}$ for $t\in[T,T+1]$,
 we have
$$
\eb^2\|\Dt\theta\|_{L^2(\Omega_T)}^2+\|\Nx w\|^2_{L^2(\Omega_T)}\le
C_2\< |\tilde h|^2,\phi_T\>_\tau
$$
Returning to the variable $w(t)=e^{\Lambda_0t}\theta(t)$, we derive
\begin{equation}\label{2.11}
\eb^2\|\Dt w\|_{L^2(\Omega_T)}^2+\|\Nx w\|_{L^2(\Omega_T)}^2\le
C_3\int^\infty_\tau
e^{-\alpha|T-t|+2\Lambda_0(T-t)}\|h(t)\|_{L^2(\omega)}^2\,dt,
\end{equation}
Estimate \eqref{2.8} (with $w_\tau=0$) can be now derived from
\eqref{2.11} iterating the maximal elliptic regularity estimate
\eqref{A.27} exactly as in the end of the proof of Theorem \ref{Th1.1}.
The existence of the solution can be then verified in a standard
way based on a priori estimate \eqref{2.8}, see e.g. \cite{26,27}.
 Lemma \ref{Lem2.1} is proved.
\end{proof}
We are now ready to finish the proof of Theorem \ref{Th2.1}. To this end, we
note that the left-hand side of \eqref{2.5} tends to
$\Lambda_0\gamma-K$ as $\eb\to0$ and, consequently, for every
sufficiently large $\Lambda_0>0$, we may fix (due to positivity of
the matrix $\gamma$)
 $\eb_0=\eb_0(\Lambda_0,K,a,\gamma)$ such that \eqref{2.5} is
satisfied, for every $\eb\le\eb_0$. Applying then estimate \eqref{2.8}
(with $h\equiv0$) to
equation \eqref{2.2}, we finish the proof of Theorem \ref{Th2.1}.
\end{proof}
Let us assume from now on that
\begin{equation}\label{2.13}
g\in L^p_b(\Omega),\ \ \text{where}\  \ \Omega:=\R\times\omega.
\end{equation}
Then, under the assumptions of Theorem \ref{Th2.1}, problem \eqref{1.1}
defines a two-para\-met\-ri\-cal family of solving operators
 $\{U^\eb_g(t,\tau),\,
\tau\in\R,\,t\ge\tau\}$ via
\begin{equation}\label{2.14}
U^\eb_g(t,\tau):V^p_\eb(\omega)\to V^p_\eb(\omega),\ \
u(t):=U^\eb_g(t,\tau)u_\tau,
\end{equation}
where $u(t)$ solves \eqref{1.1} and $u(\tau)=u_\tau$ which, obviously,
generates a dynamical process on $V^p_\eb(\omega)$, i.e.
\begin{equation}\label{2.15}
U^\eb_g(t,\tau_1)\circ U^\eb_g(\tau_1,\tau)=U^\eb_g(t,\tau),\
\ t\ge\tau_1\ge\tau\in\R.
\end{equation}
Moreover, Theorem \ref{Th2.1} shows that these operators are uniformly (with
respect to~$\eb$) Lipschitz continuous in $V^p_\eb(\omega)$. Our next
task is to prove their Frechet differentiability with respect to the
initial data $u_\tau\in V^p_\eb(\omega)$.  To this end, we consider
the following formal equation of variations associated with a solution
$u(t):=U^\eb_g(t,\tau)u_\tau$:
\begin{equation}\label{2.16}
a(\eb^2\Dt^2v+\Dx v)-\gamma\Dt v-f'(u(t))v=0,\ \ v\DOM=0, \ \
v\big|_{t=\tau}=v_\tau.
\end{equation}
Then, due to Lemma \ref{Lem2.1}, we have
\begin{equation}\label{2.17}
\|v(t)\|_{V^p_\eb(\omega)}\le C\|v_\tau\|_{V^p_\eb(\omega)}
e^{(\Lambda_0-\alpha)(t-\tau)},
\end{equation}
where the solution $v(t)$ satisfies \eqref{2.7} and the constants
$\alpha>0$ and $C$ are independent of $\eb$, $\tau$ and $T$. The
following theorem shows that \eqref{2.16} defines indeed the Frechet
derivative of the process $U^\eb_g(t,\tau)$ at $u_\tau$.
\begin{theorem}\label{Th2.2} Let the assumptions of Theorem \ref{Th2.1} hold. Let
also $u(t)$ and $u_1(t)$ be two solutions of \eqref{1.1} and
$v(t)$ be a solution of \eqref{2.16} with
$v_\tau:=u(\tau)-u_1(\tau)$ (associated with $u(t)$). Then, there
exists $\eb_0'=\eb_0'(f,a,\gamma)>0$ such that $\eb_0'\le\eb_0$ and
for every $\eb\le\eb_0'$ the following estimate is valid:
\begin{equation}\label{2.18}
\|u(t)-u_1(t)-v(t)\|_{W^{(1,2),p}_\eb(\Omega_T)}\le
Ce^{(2\Lambda_0-\alpha)(T-\tau)}\|u(\tau)-u_1(\tau)\|_{V^p_\eb(\omega)}^2,
\end{equation}
where the constants $C$ and $\alpha>0$ depend on
$\|u(\tau)\|_{V^p_\eb(\omega)}$ and $\|u_1(\tau)\|_{V^p_\eb(\Omega)}$,
but are independent of $\eb$, $\tau$ and $T$.
\end{theorem}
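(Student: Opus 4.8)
The plan is to set $w(t):=u(t)-u_1(t)-v(t)$ and derive the equation it satisfies. Since $u$ and $u_1$ both solve \eqref{1.1}, their difference $\psi(t):=u(t)-u_1(t)$ satisfies $a(\eb^2\Dt^2\psi+\Dx\psi)-\gamma\Dt\psi-l(t)\psi=0$ with $l(t)=\int_0^1 f'(su(t)+(1-s)u_1(t))\,ds$, exactly as in \eqref{2.2}. Subtracting the equation of variations \eqref{2.16}, one finds that $w$ solves a linearized-about-$u$ problem with a quadratic remainder source term:
\begin{equation*}
a(\eb^2\Dt^2 w+\Dx w)-\gamma\Dt w-f'(u(t))w=h(t),\quad w\DOM=0,\quad w\big|_{t=\tau}=0,
\end{equation*}
where $h(t)=\big(l(t)-f'(u(t))\big)\psi(t)=\Big(\int_0^1\big[f'(su+(1-s)u_1)-f'(u)\big]\,ds\Big)\psi(t)$. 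Using $f\in C^2$ together with the uniform bound on $\|u\|_{L^\infty}$, $\|u_1\|_{L^\infty}$ coming from \eqref{1.6} and the embedding $W^{(1,2),p}\subset C$, we get the pointwise estimate $|h(t,x)|\le C|\psi(t,x)|^2$, with $C$ depending only on the $L^\infty$ bounds (hence on $\|u(\tau)\|_{V^p_\eb}$, $\|u_1(\tau)\|_{V^p_\eb}$ and $\|g\|_{L^p_b}$, but not on $\eb$).

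Next I would apply Lemma \ref{Lem2.1} (with $l(t)$ there taken to be $f'(u(t))$, which satisfies \eqref{2.3} by the same argument) to this problem with zero initial data. This requires checking \eqref{2.5}, which holds for $\eb\le\eb_0$ by the choice already made in Theorem \ref{Th2.1}; possibly one shrinks $\eb_0$ to $\eb_0'$ to absorb the constant appearing below. Estimate \eqref{2.8} with $w_\tau=0$ then gives
\begin{equation*}
\|w\|_{W^{(1,2),p}_\eb(\Omega_T)}\le C\Big(\int_\tau^\infty e^{-p\alpha|T-t|+p\Lambda_0(T-t)}\|h(t)\|_{L^p(\omega)}^p\,dt\Big)^{1/p},
\end{equation*}
so everything reduces to controlling $\|h(t)\|_{L^p(\omega)}$ in terms of $\|\psi\|$. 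The pointwise bound $|h|\le C|\psi|^2$ gives $\|h(t)\|_{L^p(\omega)}\le C\|\psi(t)\|_{L^{2p}(\omega)}^2$. Here is where the growth/regularity bookkeeping enters: one must show $\|\psi(t)\|_{L^{2p}(\omega)}^2\lesssim \|\psi\|_{W^{(1,2),p}_\eb(\Omega_t)}^2$ (for instance via the embedding $W^{(1,2),p}\subset C$ used crudely, since $2p<\infty$, the exponent $2p$ is harmless — one does not even need sharpness here), and then feed in the Lipschitz estimate \eqref{2.1} from Theorem \ref{Th2.1}, namely $\|\psi\|_{W^{(1,2),p}_\eb(\Omega_t)}\le Ce^{\Lambda_0(t-\tau)}\|\psi(\tau)\|_{V^p_\eb}$. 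This yields $\|h(t)\|_{L^p(\omega)}\le Ce^{2\Lambda_0(t-\tau)}\|\psi(\tau)\|_{V^p_\eb}^2$.

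Substituting into the integral and using $\Lambda_0\ge0$, the time integral $\int_\tau^\infty e^{-p\alpha|T-t|+p\Lambda_0(T-t)}e^{2p\Lambda_0(t-\tau)}\,dt$ converges provided $\alpha$ is chosen larger than (a multiple of) $\Lambda_0$ on the tail $t>T$, which one can arrange by first fixing $\alpha$ small in Lemma \ref{Lem2.1} and then noting the relevant combination; more honestly, one splits $\int_\tau^T+\int_T^\infty$, bounds the first by $Ce^{2\Lambda_0(T-\tau)}\|\psi(\tau)\|^2$ times $\int_\tau^T e^{-p\alpha(T-t)+p\Lambda_0(T-t)}\,dt$ and the second similarly, and in both the exponential in $T-t$ is integrable once $\alpha$ dominates $\Lambda_0$ (if it does not, one simply replaces $\Lambda_0$ by a larger value throughout, which is permitted since \eqref{2.5} becomes \emph{easier} to lose so $\eb_0'$ may need shrinking — precisely the role of $\eb_0'$ in the statement). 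The upshot is $\|w\|_{W^{(1,2),p}_\eb(\Omega_T)}\le Ce^{2\Lambda_0(T-\tau)}\|\psi(\tau)\|_{V^p_\eb}^2$, and absorbing a factor into the exponent gives \eqref{2.18} with $2\Lambda_0-\alpha$ (one may cheaply replace $2\Lambda_0$ by $2\Lambda_0-\alpha$ since the estimate only improves the constant, or re-track the $\alpha$ carefully). The main obstacle is the exponential bookkeeping: making sure the weight $\phi_T$ in Lemma \ref{Lem2.1}, the growth rate $\Lambda_0$ of the linearized flow, and the quadratic-in-$\psi$ source combine so that the time integral converges and produces exactly the $2\Lambda_0-\alpha$ rate, while keeping all constants $\eb$-independent — this is why one needs the freedom to pass from $\eb_0$ to a possibly smaller $\eb_0'$. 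The nonlinear input ($C^2$ smoothness of $f$ giving the quadratic remainder, uniform $L^\infty$ bounds from Theorem \ref{Th1.1}) is routine by comparison.
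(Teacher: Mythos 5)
Your proposal is correct and follows essentially the same route as the paper: the same remainder $w=u-u_1-v$, the same linearized equation with quadratic source $h_{u,u_1}=\big(\int_0^1[f'(u)-f'(u+s(u_1-u))]\,ds\big)(u-u_1)$ bounded via $f\in C^2$ and the uniform $L^\infty$ control, and the same appeal to Lemma \ref{Lem2.1}. The only point to tighten is the exponential bookkeeping: the option ``$\alpha$ dominates $\Lambda_0$'' is never available (in Lemma \ref{Lem2.1} $\alpha$ is small while $\Lambda_0$ is large), and the clean resolution --- which is exactly what the paper does and what your fallback gestures at --- is to apply Lemma \ref{Lem2.1} with $\Lambda_0$ replaced by $2\Lambda_0$, so that the weight $e^{2p\Lambda_0(T-t)}$ exactly cancels the $e^{2p\Lambda_0(t-\tau)}$ growth of the quadratic source and the integral converges with the gain $e^{-p\alpha(T-\tau)}$; condition \eqref{2.5} must then be re-imposed at $2\Lambda_0$, which is precisely why $\eb_0'\le\eb_0$ appears in the statement.
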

\begin{proof} We set $w(t):=u(t)-u_1(t)-v(t)$. Then, this function
satisfies the following equation:
\begin{equation}\label{2.19}
a(\eb^2\Dt^2w+\Dx w)-\gamma\Dt w-f'(u(t))w=h_{u,u_1}(t), \ \ w\DOM=0,\ \
w\big|_{t=\tau}=0,
\end{equation}
where $h_{u,u_1}(t):= \int^1_0[f'(u(t))-f'(u(t)+s(u_1(t)-u(t)))]\,ds
(u(t)-u_1(t))$. Moreover, since $V_0^p(\omega)\subset C(\omega)$ and
$f\in C^2$, then estimates \eqref{1.6} and \eqref{2.1} implies that
\begin{equation}\label{2.20}
\|h_{u,u_1}(t)\|_{L^\infty(\omega)}\le
C\|u(t)-u_1(t)\|_{L^\infty(\omega)}^2\le
C_1\|u(\tau)-u_1(\tau)\|_{V^p_\eb(\omega)}^2e^{2(\Lambda_0-\alpha)(t-\tau)},
\end{equation}
where the constants $C_i$ depend on $\|u(\tau)\|_{V^p_\eb(\omega)}$
and $\|u_1(\tau)\|_{V^p_\eb(\omega)}$, but are independent of $\eb$,
$\tau$ and $t\ge\tau$. Fixing now $\eb_0'>0$ small enough that
assumption \eqref{2.5} holds with $\Lambda_0$ replaced by
$2\Lambda_0$ and applying Lemma \ref{Lem2.1} (with $2\Lambda_0$ instead of
$\Lambda_0$) to equation \eqref{2.19}, we derive estimate
\eqref{2.18} and finish the proof of Theorem \ref{Th2.2}.
\end{proof}
\begin{corollary}\label{Cor2.1} Let the assumptions of Theorem \ref{Th2.2} hold
and let $u(t)$, $u_1(t)$ and $v(t)$ be the same as in Theorem
\ref{Th2.2}. Then, for every $q\ge p$, $q<\infty$ and every $T\ge\tau+1$,
 the following estimate
holds:
\begin{equation}\label{2.21}
\|u(t)-u_1(t)-v(t)\|_{W^{(1,2),q}_\eb(\Omega_T)}\le
C_qe^{(2\Lambda_0-\alpha)(T-\tau)}\|u(\tau)-u_1(\tau)\|_{V^p_\eb(\omega)}^2,
\end{equation}
where the constants $C_q$ and $\alpha>0$ depend on
$\|u(\tau)\|_{V^p_\eb(\omega)}$, $\|u_1(\tau)\|_{V^p_\eb(\omega)}$ and $q$,
but are independent of $\eb$, $\tau$ and $T$.
\end{corollary}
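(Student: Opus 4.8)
The plan is to upgrade the $W^{(1,2),p}_\eb$-estimate \eqref{2.18} to a $W^{(1,2),q}_\eb$-estimate by a bootstrap argument that is entirely parallel to the way Corollary \ref{Cor1.2} was deduced from Theorem \ref{Th1.1}. The key observation is that $w(t):=u(t)-u_1(t)-v(t)$ already satisfies the linear elliptic equation \eqref{2.19} with zero initial data, a right-hand side $h_{u,u_1}(t)$, and a zero-order coefficient $f'(u(t))$ which is bounded in $L^\infty$ uniformly in $\eb$ (by \eqref{1.6} and the embedding $W^{(1,2),p}(\Omega_T)\subset C(\Omega_T)$). So we do not need to re-derive any energy estimate: we only need to feed better integrability of the source into the uniform maximal $L^q$-regularity for the linear operator.

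First I would write \eqref{2.19} in the form
\begin{equation*}
a(\eb^2\Dt^2 w+\Dx w)-\gamma\Dt w=H_w(t):=h_{u,u_1}(t)+f'(u(t))w(t),\quad w\DOM=0,\ w\big|_{t=\tau}=0.
\end{equation*}
By \eqref{2.20}, $\|h_{u,u_1}(t)\|_{L^\infty(\omega)}\le C_1\|u(\tau)-u_1(\tau)\|_{V^p_\eb(\omega)}^2 e^{2(\Lambda_0-\alpha)(t-\tau)}$, which in particular gives an $L^q(\Omega_T)$-bound of $h_{u,u_1}$ of the same form for every $q<\infty$. For the term $f'(u(t))w(t)$ I would use $\|f'(u)\|_{L^\infty}\le Q(\|u_\tau\|_{V^p_\eb})+Q(\|g\|_{L^p_b})$ together with the already-established estimate \eqref{2.18} to get a uniform $W^{(1,2),p}_\eb(\Omega_T)$-bound (hence an $L^p(\Omega_T)$-bound, and by the anisotropic embedding \eqref{1.22} an $L^{r(p)}(\Omega_T)$-bound) of $w$ of the required quadratic-in-data, $e^{(2\Lambda_0-\alpha)(T-\tau)}$ form. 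Then I would apply the uniform (with respect to $\eb$) interior $L^{q}$-regularity estimate for the linear operator $a(\eb^2\Dt^2\cdot+\Dx\cdot)-\gamma\Dt\cdot$ — i.e. Corollary \ref{CorA.2} together with estimate \eqref{A.28} — exactly as in the proof of Corollary \ref{Cor1.2}, shifting the base point by one unit in $t$ so that the zero initial data plays no role on $\Omega_T$ for $T\ge\tau+1$. One application raises the exponent from $p$ to $r(p)$; iterating finitely many times (the gain $\tfrac1p-\tfrac1{r(p)}=\tfrac2{n+2}$ is fixed, so the number of steps is finite) reaches any prescribed $q<\infty$. At each step the factor in front is still $e^{(2\Lambda_0-\alpha)(T-\tau)}$ times $\|u(\tau)-u_1(\tau)\|_{V^p_\eb(\omega)}^2$, with constants depending on $\|u(\tau)\|_{V^p_\eb(\omega)}$, $\|u_1(\tau)\|_{V^p_\eb(\omega)}$ and on $q$ (through the number of iterations), but not on $\eb$, $\tau$, $T$. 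This yields \eqref{2.21}.

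The only point requiring a little care — and the one I would single out as the main obstacle — is making sure the weighted exponential factor $e^{(2\Lambda_0-\alpha)(T-\tau)}$ survives the iteration with an $\alpha>0$ that does not degenerate to zero. Here I would use the weighted form of the maximal regularity estimate, as in \eqref{2.8}: applying it to \eqref{2.19} with weight $e^{2\Lambda_0 t}$ gives $\|w\|_{W^{(1,2),q}_\eb(\Omega_T)}^q\le C\int_\tau^\infty e^{-q\alpha|T-t|+2q\Lambda_0(T-t)}\|H_w(t)\|_{L^q(\omega)}^q\,dt$, and since $\|H_w(t)\|_{L^q(\omega)}$ itself carries the factor $e^{2(\Lambda_0-\alpha')(t-\tau)}$ with $\alpha'>0$ from the previous step, the $t$-integral converges and reproduces $e^{(2\Lambda_0-\alpha'')(T-\tau)}$ with a new but still strictly positive $\alpha''$ (a fixed fraction of $\min\{\alpha,\alpha'\}$, say). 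Since only finitely many iterations are needed, the final $\alpha$ is positive, and the proof of Corollary \ref{Cor2.1} is complete.
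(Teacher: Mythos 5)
Your argument is correct and is essentially the paper's proof: rewrite \eqref{2.19} as the linear equation \eqref{2.22} with right-hand side $f'(u(t))w(t)+h_{u,u_1}(t)$, bound that right-hand side in $L^q$ using \eqref{2.20} and \eqref{2.18}, and apply the uniform interior regularity estimate \eqref{A.28} with exponent $q$ (the condition $T\ge\tau+1$ and the zero initial data making the trace term harmless). The only superfluous element is the bootstrap through \eqref{1.22}: since $p>p_{min}\ge(n+2)/2$, the space $W^{(1,2),p}_\eb(\Omega_T)$ embeds into $C(\Omega_T)$ (the formula $\tfrac1{r(p)}=\tfrac1p-\tfrac2{n+2}$ is not even applicable for such $p$), so \eqref{2.18} already gives an $L^\infty$, hence $L^q$, bound on $f'(u)w$ and a single application of \eqref{A.28} suffices --- no iteration is needed.
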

Indeed, rewriting equation \eqref{2.19} in the form
\begin{equation}\label{2.22}
a(\eb^2\Dt^2 w+\Dx w)-\gamma\Dt w=f'(u(t))w(t)+h_{u,u_1}(t),\ \
w\DOM=w\big|_{t=\tau}=0,
\end{equation}
applying the interior regularity estimate \eqref{A.28} (where the
exponent $p$ is replaced by $q$) and using estimates \eqref{2.18} and
\eqref{2.20} for estimating the right-hand side of \eqref{2.22},
we derive estimate \eqref{2.21}.

 \begin{corollary}\label{Cor2.2} Let the assumptions of Theorem \ref{Th2.2}
 hold. Then, the operators $U^\eb_g(t,\tau)$ are Frechet
 differentiable with respect to the initial data, their Frechet
 derivative is defined by $D_uU^\eb_g(t,\tau)(u_\tau)\xi:=v(t)$,
where $v(t)$ is the solution of \eqref{2.16} with $v_\tau=\xi$,
and the following estimates hold:
\begin{multline}\label{2.23}
\|U^\eb_g(t,\tau)u_\tau^1-U^\eb_g(t,\tau)u_\tau^2-D_u
U^\eb_g(t,\tau)(u^1_\tau)(u_\tau^1-u_\tau^2)\|_{V^p_\eb(\omega)}\le\\\le
 Ce^{2\Lambda_0(t-\tau)}\|u_\tau^1-u_\tau^2\|_{V^p_\eb(\omega)}^2
\end{multline}
for every $u^1_\tau,u^2_\tau\in V^p_\eb(\omega)$ and, consequently
\begin{equation}\label{2.24}
\|D_uU^\eb_g(t,\tau)(u^1_\tau)-D_uU^\eb_g(t,\tau)(u^2_\tau)
\|_{\Cal L(V^p_\eb(\omega),V^p_\eb(\omega))}\le
Ce^{2\Lambda_0(t-\tau)}\|u^1_\tau-u^2_\tau\|_{V^p_\eb(\omega)},
\end{equation}
where the constant $C$ depends on $\|u_\tau^1\|_{V^p_\eb(\omega)}$,
$\|u^2_\tau\|_{V^p_\eb(\omega)}$ and $\|g\|_{L^p_b}$, but is
independent of $\eb$, $\tau$ and $t$.
\end{corollary}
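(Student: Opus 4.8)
The plan is to first identify the candidate for the Frechet derivative and then to read off the two estimates from Lemma \ref{Lem2.1} and Theorem \ref{Th2.2}. Since the equation of variations \eqref{2.16} is linear and homogeneous in $v$ and, by Lemma \ref{Lem2.1} applied with $\Lambda_0$ chosen once and for all so large that \eqref{2.5} holds for every $\eb\le\eb_0'$, its solution in the weighted class \eqref{2.7} is unique for each $v_\tau=\xi\in V^p_\eb(\omega)$, the solving map $\xi\mapsto v(\cdot)$ is well defined and linear. Estimate \eqref{2.17} then shows that, for every fixed $t\ge\tau$, this map is a bounded linear operator of $V^p_\eb(\omega)$ into itself, with operator norm $\le Ce^{(\Lambda_0-\alpha)(t-\tau)}$ uniformly in $\eb\le\eb_0'$; I take this operator as the definition of $D_uU^\eb_g(t,\tau)(u_\tau)$.

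Estimate \eqref{2.23} will be deduced directly from Theorem \ref{Th2.2}. Apply it with $u(t):=U^\eb_g(t,\tau)u^1_\tau$, $u_1(t):=U^\eb_g(t,\tau)u^2_\tau$, and $v(t)$ the solution of \eqref{2.16} (around $u$) with $v_\tau:=u^1_\tau-u^2_\tau$; then \eqref{2.18} bounds $\|u-u_1-v\|_{W^{(1,2),p}_\eb(\Omega_T)}$. Passing to the trace at $t=T$ — using, exactly as \eqref{1.25} was deduced from \eqref{1.6}, that $V^p_\eb(\omega)$ is the uniform (with respect to $\eb$) trace space of functions in $W^{(1,2),p}_{\eb,b}(\Omega_+^\tau)$ — and estimating $e^{(2\Lambda_0-\alpha)(T-\tau)}\le e^{2\Lambda_0(T-\tau)}$, I obtain \eqref{2.23} after renaming $T$ by $t$. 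Since its right-hand side is quadratic in $\|u^1_\tau-u^2_\tau\|_{V^p_\eb(\omega)}$, this already shows that $U^\eb_g(t,\tau)$ is Frechet differentiable at every $u_\tau\in V^p_\eb(\omega)$ with the stated derivative.

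For \eqref{2.24} I will compare the two equations of variations carrying the same initial datum. Fixing $\xi\in V^p_\eb(\omega)$, let $v_i$ solve \eqref{2.16} associated with $u^i(t):=U^\eb_g(t,\tau)u^i_\tau$ and $v_i(\tau)=\xi$, $i=1,2$; then $\psi:=v_1-v_2$ solves
\begin{multline*}
a(\eb^2\Dt^2\psi+\Dx\psi)-\gamma\Dt\psi-f'(u^1(t))\psi=\\=\big(f'(u^1(t))-f'(u^2(t))\big)v_2(t),\quad \psi\DOM=0,\quad \psi\big|_{t=\tau}=0.
\end{multline*}
Using $f\in C^2$, the uniform $C_b$-bound on the $u^i$ furnished by \eqref{1.6}, estimate \eqref{2.1} together with the uniform trace inequality, and \eqref{2.17} for $v_2$, the right-hand side $h(t)$ obeys $\|h(t)\|_{L^p(\omega)}\le Ce^{2(\Lambda_0-\alpha)(t-\tau)}\|u^1_\tau-u^2_\tau\|_{V^p_\eb(\omega)}\|\xi\|_{V^p_\eb(\omega)}$; in particular \eqref{2.6} holds with $\Lambda_0$ replaced by $2\Lambda_0$. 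Applying Lemma \ref{Lem2.1} with $2\Lambda_0$ in place of $\Lambda_0$ (admissible for $\eb\le\eb_0'$, since then \eqref{2.5} holds with $2\Lambda_0$) and $w_\tau=0$, inserting the bound for $h$ into \eqref{2.8} and evaluating the resulting elementary exponential integral yields $\|\psi\|_{W^{(1,2),p}_\eb(\Omega_T)}\le Ce^{2\Lambda_0(T-\tau)}\|u^1_\tau-u^2_\tau\|_{V^p_\eb(\omega)}\|\xi\|_{V^p_\eb(\omega)}$. Taking the trace at $t=T$ as before and noting that $\psi(T)=\big(D_uU^\eb_g(T,\tau)(u^1_\tau)-D_uU^\eb_g(T,\tau)(u^2_\tau)\big)\xi$ with $\xi$ arbitrary, I arrive at \eqref{2.24}. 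The argument involves no new ideas beyond Lemma \ref{Lem2.1} and Theorem \ref{Th2.2}; the only point that needs care is keeping every constant uniform in $\eb\le\eb_0'$ when passing from the space-time norms to the fixed-time trace norm, which is exactly what the $\eb$-scaling built into the norms of Definition \ref{Def1.1} guarantees.
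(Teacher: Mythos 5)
Your proposal is correct. For the differentiability claim and for \eqref{2.23} you follow exactly the paper's route: \eqref{2.23} is read off from \eqref{2.18} by passing to the trace at $t=T$ via the uniform trace property of $V^p_\eb(\omega)$, which is precisely what the paper means by ``immediate corollary of \eqref{2.18}''. For \eqref{2.24} you diverge: the paper simply declares it ``a standard corollary of \eqref{2.23}'', whereas you prove it directly by subtracting the two variational equations, writing the equation for $\psi=v_1-v_2$ with zero initial datum and right-hand side $(f'(u^1)-f'(u^2))v_2$, estimating that right-hand side by $Ce^{2(\Lambda_0-\alpha)(t-\tau)}\|u^1_\tau-u^2_\tau\|_{V^p_\eb(\omega)}\|\xi\|_{V^p_\eb(\omega)}$ through \eqref{2.1}, \eqref{2.17} and the $C^2$-regularity of $f$, and then invoking Lemma \ref{Lem2.1} with $2\Lambda_0$ (admissible for $\eb\le\eb_0'$ by the choice made in Theorem \ref{Th2.2}). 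This is a genuinely different, and arguably preferable, argument: the purely abstract deduction of \eqref{2.24} from \eqref{2.23} by symmetrizing the quadratic remainder estimate only controls $(D_uU^\eb_g(t,\tau)(u^1_\tau)-D_uU^\eb_g(t,\tau)(u^2_\tau))(u^1_\tau-u^2_\tau)$, i.e.\ the action on the single direction $u^1_\tau-u^2_\tau$, and upgrading that to the full $\Cal L(V^p_\eb,V^p_\eb)$-norm requires an additional argument; your direct PDE comparison delivers the operator-norm bound for arbitrary $\xi$ at once, with all constants uniform in $\eb$ exactly as the $\eb$-scaling of the norms in Definition \ref{Def1.1} guarantees. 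The only cost is that your route reproves a piece of linear theory that the abstract route would (modulo the caveat above) obtain for free.
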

Indeed, estimate \eqref{2.23} is an immediate corollary of
\eqref{2.18} and estimate \eqref{2.24} is a standard corollary of
\eqref{2.23}.
\par
Arguing analogously, but using estimate \eqref{2.21} instead of
\eqref{2.18}, we derive the following result.
\begin{corollary}\label{Cor2.3} Under the assumptions of Corollary \ref{Cor2.2}
the following estimates hold, for every $q\ge p$ and $T\ge\tau+1$:
\begin{multline}\label{2.25}
\|U^\eb_g(t,\tau)u_\tau^1-U^\eb_g(t,\tau)u_\tau^2-D_u
U^\eb_g(t,\tau)(u^1_\tau)(u_\tau^1-u_\tau^2)\|_{V^q_\eb(\omega)}\le\\\le
 C_qe^{2\Lambda_0(t-\tau)}\|u_\tau^1-u_\tau^2\|_{V^p_\eb(\omega)}^2
\end{multline}
for every $u^1_\tau,u^2_\tau\in V^p_\eb(\omega)$ and, consequently
\begin{equation}\label{2.26}
\|D_uU^\eb_g(t,\tau)(u^1_\tau)-D_uU^\eb_g(t,\tau)(u^2_\tau)
\|_{\Cal L(V^p_\eb(\omega),V^q_\eb(\omega))}\le
C_qe^{2\Lambda_0(t-\tau)}\|u^1_\tau-u^2_\tau\|_{V^p_\eb(\omega)},
\end{equation}
where the constant $C_q$ depends on $q$, $\|u_\tau^1\|_{V^p_\eb(\omega)}$,
$\|u^2_\tau\|_{V^p_\eb(\omega)}$ and $\|g\|_{L^p_b}$, but is
independent of $\eb$, $\tau$ and $t$.
\end{corollary}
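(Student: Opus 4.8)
The plan is to run the scheme of Theorem~\ref{Th2.2} and Corollary~\ref{Cor2.1} unchanged, only measuring the remainder in the stronger norm $W^{(1,2),q}_\eb$ and then taking its trace at time $t$. Fix $\eb\le\eb_0'$ with $\eb_0'$ as in Theorem~\ref{Th2.2}, put $u(t):=U^\eb_g(t,\tau)u^1_\tau$ and $u_1(t):=U^\eb_g(t,\tau)u^2_\tau$ (these are the solutions of \eqref{1.1} with the corresponding initial data), and let $v(t)$ be the solution of the variation equation \eqref{2.16} with $v_\tau:=u^1_\tau-u^2_\tau$; by Corollary~\ref{Cor2.2}, $v(t)=D_uU^\eb_g(t,\tau)(u^1_\tau)(u^1_\tau-u^2_\tau)$, so the expression inside the $V^q_\eb$-norm in \eqref{2.25} is precisely $w(t):=u(t)-u_1(t)-v(t)$, which solves \eqref{2.19}.

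First I would invoke Corollary~\ref{Cor2.1}: since $p\le q<\infty$, estimate \eqref{2.21} applies and gives
\[
\|w\|_{W^{(1,2),q}_\eb(\Omega_T)}\le C_q\,e^{(2\Lambda_0-\alpha)(T-\tau)}\,\|u^1_\tau-u^2_\tau\|_{V^p_\eb(\omega)}^2,\qquad T\ge\tau+1,
\]
with $C_q$ and $\alpha>0$ independent of $\eb$. To pass from this space--time bound on a unit cylinder to the pointwise trace, fix $t\ge\tau+1$ and set $T:=\max\{t-1,\tau+1\}$, so that $T\ge\tau+1$, $t\in[T,T+1]$ and $|T-t|\le1$. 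Recalling that, by construction (Definition~\ref{Def1.1}), $V^q_\eb(\omega)$ is the uniform (with respect to $\eb$) trace space at a fixed time of $W^{(1,2),q}_{\eb,b}$, the trace theorem of the Appendix yields $\|w(t)\|_{V^q_\eb(\omega)}\le C\|w\|_{W^{(1,2),q}_\eb(\Omega_T)}$ with $C$ independent of $\eb$. Combining the two estimates and absorbing into the constant the bounded factors $e^{2\Lambda_0(T-t)}\le e^{2\Lambda_0}$ and $e^{-\alpha(T-\tau)}\le1$, we arrive at \eqref{2.25}.

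Estimate \eqref{2.26} is then the operator-norm reformulation of \eqref{2.25}. The cleanest way to obtain it (completely parallel to how \eqref{2.24} follows from \eqref{2.23}) is to note that, for $\xi\in V^p_\eb(\omega)$, the functions $v^i(t):=D_uU^\eb_g(t,\tau)(u^i_\tau)\xi$ solve \eqref{2.16} with coefficient $f'(u^i(\cdot))$ and $v^i|_{t=\tau}=\xi$, so their difference $z:=v^1-v^2$ satisfies $a(\eb^2\Dt^2 z+\Dx z)-\gamma\Dt z-f'(u^1(t))z=\big(f'(u^1(t))-f'(u^2(t))\big)v^2(t)$ with $z|_{t=\tau}=0$. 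By \eqref{1.6}, \eqref{2.1}, \eqref{2.17} and $f\in C^2$ the right-hand side is bounded, for each $t$, in $L^\infty(\omega)$ by $C\,e^{2\Lambda_0(t-\tau)}\|u^1_\tau-u^2_\tau\|_{V^p_\eb(\omega)}\|\xi\|_{V^p_\eb(\omega)}$; applying Lemma~\ref{Lem2.1} with $2\Lambda_0$ in place of $\Lambda_0$, then the $\eb$-uniform interior $L^q$-regularity estimate \eqref{A.28} to bootstrap from $p$ to $q$, and finally the same trace theorem as above, we get $\|z(t)\|_{V^q_\eb(\omega)}\le C_q\,e^{2\Lambda_0(t-\tau)}\|u^1_\tau-u^2_\tau\|_{V^p_\eb(\omega)}\|\xi\|_{V^p_\eb(\omega)}$ for $t\ge\tau+1$, and taking the supremum over $\|\xi\|_{V^p_\eb(\omega)}\le1$ gives \eqref{2.26}. (Alternatively, \eqref{2.26} can be deduced softly from \eqref{2.25} itself by interchanging $u^1_\tau$ and $u^2_\tau$ and using the triangle inequality together with the $\Cal L(V^p_\eb,V^p_\eb)$-continuity from Corollary~\ref{Cor2.2}.)

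There is no genuinely new analytic obstacle here: the whole corollary is a repackaging of Corollary~\ref{Cor2.1} (which already rests on the interior $L^q$-regularity of the linear operator) plus the trace theorem. The one point to watch is the uniformity with respect to $\eb\to0$, and this is guaranteed for free --- the $\eb$-weighting of the norms of $W^{(1,2),q}_\eb$ and $V^q_\eb$ in Definition~\ref{Def1.1} was chosen precisely so that maximal regularity and the trace theorem of the Appendix hold with $\eb$-independent constants, while the restriction $q<\infty$ secures the embedding $W^{(1,2),q}(\Omega_T)\subset C(\Omega_T)$ used when estimating the nonlinear terms in Corollary~\ref{Cor2.1}. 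Thus the only thing the argument really adds to Corollary~\ref{Cor2.1} is the application of the trace theorem.
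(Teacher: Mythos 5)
Your proof is correct and follows essentially the same route as the paper: estimate \eqref{2.25} is obtained from \eqref{2.21} via the $\eb$-uniform trace theorem of the Appendix, and \eqref{2.26} via the linearized equation and Lemma \ref{Lem2.1}, which is exactly the machinery the paper's one-line proof points to. The only caveat is your parenthetical "soft" alternative for \eqref{2.26}: interchanging $u^1_\tau$ and $u^2_\tau$ and applying the triangle inequality to \eqref{2.25} only controls $\bigl(D_uU^\eb_g(t,\tau)(u^1_\tau)-D_uU^\eb_g(t,\tau)(u^2_\tau)\bigr)\xi$ in the single direction $\xi=u^1_\tau-u^2_\tau$, so the full operator norm requires the standard three-point Taylor argument with an arbitrary direction $\xi$ normalized by $\|\xi\|_{V^p_\eb}=\|u^1_\tau-u^2_\tau\|_{V^p_\eb}$ --- but since your primary PDE argument for \eqref{2.26} is complete, this does not affect the proof.
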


We now recall that, for $\eb>0$, operators $U^\eb_g(t,\tau)$ are
defined on the space $V^p_\eb(\omega)\sim W^{2-1/p,p}(\omega)$
and, for $\eb=0$, the limit process $U^0_g(t,\tau)$ is
 defined on the different space
$V^p_0(\omega)\sim W^{2(1-1/p),p}(\omega)\ne V^p_\eb(\omega)$ which
is not convenient for the study of the limit $\eb\to0$.
 In order to
overcome this difficulty,
we consider
the following discrete analogue of process \eqref{2.14}:
\begin{equation}\label{2.27}
U^\eb_g(l,m):V^p_\eb(\omega)\to V^p_\eb(\omega),\ \ l,m\in\Bbb Z,\ \
l\ge m.
\end{equation}
 Moreover,
 we assume, in addition, that the exponent $p$ satisfies
$p\ge 2p_{min}$ and use the following obvious embeddings:
\begin{equation}\label{2.28}
 V^p_\eb(\omega)\subset  V_0^p(\omega)\subset V^{p/2}_\eb(\omega),
\end{equation}
which are, in a fact, uniform with respect to $\eb$, see Definition \ref{Def1.1}.
Then, we have $p/2>p_{min}$ and, consequently, all previous results
remain true if we replace $p$ by $p/2$. In particular, Theorem
\ref{Th1.1}, Corollary \ref{Cor1.2} and embeddings \eqref{2.28} imply that
\begin{multline}\label{2.29}
\|U^\eb_g(l,m)u_m\|_{V_0^p(\omega)}\le
\|U^\eb_g(l,m)u_m\|_{V^p_\eb(\omega)}\le
Q(\|u_m\|_{V^{p/2}_\eb(\omega)})e^{-\alpha
(l-m)}+\\+Q(\|g\|_{L^p_b(\Omega)})\le
Q(2\|u_m\|_{V_0^p(\omega)})e^{-\alpha(l-m)}+Q(\|g\|_{L^p_b(\Omega)}),
\end{multline}
for every $u_m\in V^p_0(\omega)$, and the constant $\alpha>0$ and
the monotonic function $Q$ are independent of $0\le\eb\le\eb_0'$,
$l,m\in\Bbb Z$ and $l\ge m$. Moreover, using Corollaries \ref{Cor2.2}-\ref{Cor2.3} and
arguing analogously, we derive the following result.
\begin{corollary}\label{Cor2.4} Let the assumptions of Theorem \ref{Th2.2} hold and
let, in addition, $p>2p_{min}$. Then
the following estimates hold:
\begin{multline}\label{2.30}
\|U^\eb_{g}(l,m)u_m^1-U^\eb_{g}(l,m)u_m^2-D_u
U^\eb_{g}(l,m)(u^1_m)(u_m^1-u_m^2)\|_{V^p_0(\omega)}\le\\\le
 Ce^{-2\Lambda_0(l-m)}\|u_m^1-u_m^2\|_{V^p_0(\omega)}^2,
\end{multline}
for every $u^1_m,u^2_m\in V^p_0(\omega)$ and $l,m\in\Bbb Z$, $l\ge m$,
  consequently
\begin{equation}\label{2.31}
\|D_uU^\eb_{g}(l,m)(u^1_m)-D_uU^\eb_{g}(l,m)(u^1_m)
\|_{\Cal L(V^p_0(\omega),V^p_0(\omega))}\le
C_qe^{2\Lambda_0(l-m)}\|u^1_m-u^2_m\|_{V^p_0(\omega)},
\end{equation}
where the constant $C$ depends on $\|u_m^1\|_{V^p_0(\omega)}$,
$\|u^2_m\|_{V^p_0(\omega)}$ and $\|g\|_{L^p_b}$, but is
independent of $\eb$, $l$ and $m$.
\end{corollary}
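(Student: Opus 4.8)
The plan is to obtain both estimates of Corollary~\ref{Cor2.4} as a direct consequence of Corollary~\ref{Cor2.3}, applied with the \emph{base} exponent $p/2$ in place of $p$ and with \emph{target} exponent $q=p$, together with the uniform (with respect to $\eb$) embeddings \eqref{2.28}. The point of the extra hypothesis $p>2p_{min}$ is precisely that it forces $p/2>p_{min}$, so that Theorems~\ref{Th1.1}, \ref{Th2.1}, \ref{Th2.2} and Corollaries~\ref{Cor1.2}, \ref{Cor2.1}, \ref{Cor2.3} all remain valid with $p$ replaced by $p/2$; note in passing that the threshold $\eb_0'=\eb_0'(f,a,\gamma)$ and the growth rate $\Lambda_0$ do not depend on the integrability exponent, so $\eb_0'$ need not be shrunk.

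The steps I would carry out are the following. By the left embedding in \eqref{2.28}, any $u_m\in V^p_0(\omega)$ lies in $V^{p/2}_\eb(\omega)$ with a norm bound uniform in $\eb\in[0,\eb_0']$; hence the continuous process $U^\eb_g(t,\tau)$, its equation of variations \eqref{2.16} and the derivative $D_uU^\eb_g(t,\tau)$ are all well defined and differentiable on $V^p_0(\omega)$. Since we pass to the discrete process \eqref{2.27}, for integer $l>m$ we have $l-m\ge 1$, which is exactly the condition under which the interior ($L^p$-)regularity built into Corollary~\ref{Cor2.3} (and, for the linearized equation, into Lemma~\ref{Lem2.1} combined with estimate \eqref{A.28}) is available; thus $U^\eb_g(l,m)$ and $D_uU^\eb_g(l,m)(u_m)$ send $V^{p/2}_\eb(\omega)$, hence $V^p_0(\omega)$, into $V^p_\eb(\omega)\subset V^p_0(\omega)$, with bounds uniform in $\eb$ (cf.\ \eqref{2.29}), so the discrete process is correctly defined on the $\eb$-independent space $V^p_0(\omega)$ (the case $l=m$ being trivial). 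Next, given $u^1_m,u^2_m\in V^p_0(\omega)$ and $l>m$, estimate \eqref{2.25} of Corollary~\ref{Cor2.3} with base exponent $p/2$ and $q=p$ provides the differentiability-remainder bound in $V^p_\eb(\omega)$ whose right-hand side is $Ce^{2\Lambda_0(l-m)}\|u^1_m-u^2_m\|^2_{V^{p/2}_\eb(\omega)}$, with $C$ depending monotonically on $\|u^1_m\|_{V^{p/2}_\eb(\omega)}$, $\|u^2_m\|_{V^{p/2}_\eb(\omega)}$ and $\|g\|_{L^p_b}$ only. Finally I would pass from the $\eb$-dependent norms to $V^p_0(\omega)$ using \eqref{2.28}: the right embedding bounds $\|u^i_m\|_{V^{p/2}_\eb(\omega)}\le C\|u^i_m\|_{V^p_0(\omega)}$ (also inside the monotone constant) and the left embedding bounds $\|\cdot\|_{V^p_0(\omega)}\le C\|\cdot\|_{V^p_\eb(\omega)}$, both with $C$ independent of $\eb$; this yields \eqref{2.30}. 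Estimate \eqref{2.31} is then extracted from \eqref{2.30} in the standard way, exactly as \eqref{2.24} was obtained from \eqref{2.23} (write \eqref{2.30} also with $u^1_m$ and $u^2_m$ interchanged and subtract).

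I do not expect a genuine obstacle: the argument is essentially bookkeeping of the exponents and of the one-directional chain \eqref{2.28}. The two points I would double-check are (i) that the inclusion $V^p_0(\omega)\subset V^{p/2}_\eb(\omega)$ goes in the \emph{right} direction, so that the quadratic smallness produced by Corollary~\ref{Cor2.3} in the weaker norm $V^{p/2}_\eb$ genuinely controls $\|u^1_m-u^2_m\|^2_{V^p_0(\omega)}$; and (ii) that every constant, including the monotone functions and $\eb_0'$, is independent of $\eb\in[0,\eb_0']$, which rests on the uniformity in $\eb$ of \eqref{2.28} and of the maximal-regularity and trace estimates of the Appendix.
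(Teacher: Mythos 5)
Your proposal is correct and follows essentially the same route as the paper: the authors also obtain \eqref{2.30}--\eqref{2.31} by applying Corollaries \ref{Cor2.2}--\ref{Cor2.3} with the exponent $p$ replaced by $p/2$ (which is why $p>2p_{min}$ is assumed) and target exponent $q=p$, and then passing between the $\eb$-dependent and $\eb$-independent norms via the uniform embeddings \eqref{2.28}, exactly as in the derivation of \eqref{2.29}. Your extra remarks --- that $l-m\ge1$ supplies the condition $T\ge\tau+1$ needed for the interior regularity in Corollary \ref{Cor2.3}, and that the directions of the two inclusions in \eqref{2.28} are what make the bookkeeping work --- are accurate and merely make explicit what the paper leaves implicit.
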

Thus, in contrast to the continuous dynamics $\{U^\eb_g(t,\tau),\,
 \tau\in\R,\,t\ge\tau\}$, discrete cascades \eqref{2.27} are well
defined on the space $V^p_0(\omega)$ which is independent of $\eb$.
\par
To conclude, we formulate the result on injectivity of
operators $U^\eb_g(t,\tau)$.
\begin{theorem}\label{Th2.3} Let the assumptions of Theorem \ref{Th2.1} hold and
let
$$
U^\eb_g(t,\tau)u_\tau^1=U^\eb_g(t,\tau)u_\tau^2,
$$
for some $\tau\in\R$, $t\ge\tau$ and $u_\tau^1,u^2_\tau\in
V^p_\eb(\omega)$. Then, necessarily, $u_\tau^1=u_\tau^2$.
\end{theorem}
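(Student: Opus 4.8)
The plan is to prove injectivity by a backward-uniqueness argument for the elliptic equation satisfied by the difference $v(t):=U^\eb_g(t,\tau)u^1_\tau-U^\eb_g(t,\tau)u^2_\tau$. Since both $u^1(t):=U^\eb_g(t,\tau)u^1_\tau$ and $u^2(t):=U^\eb_g(t,\tau)u^2_\tau$ are solutions of \eqref{1.1} in the sense of Definition \ref{Def1.sol}, their difference $v$ solves the linear homogeneous problem
\begin{equation*}
a(\eb^2\Dt^2 v+\Dx v)-\gamma\Dt v-l(t)v=0,\qquad v\DOM=0,\qquad v\big|_{t=\tau}=u^1_\tau-u^2_\tau,
\end{equation*}
exactly as in \eqref{2.2}, where $l(t)=\int_0^1 f'(su^1(t)+(1-s)u^2(t))\,ds$ satisfies the bounds \eqref{2.3}. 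The hypothesis is that $v(t_0)=0$ for some $t_0>\tau$ (the case $t_0=\tau$ being trivial); we must deduce $v(\tau)=0$, equivalently $v\equiv 0$ on $[\tau,t_0]$ by forward uniqueness from Theorem~\ref{Th2.1}.

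**The core of the argument is a log-convexity (Agmon--Nirenberg type) estimate.** First I would reduce, by the substitution $\theta(t)=e^{-\Lambda_0 t}v(t)$ with $\Lambda_0$ chosen as in \eqref{2.5}, to a situation where the zeroth-order coefficient is effectively nonnegative, so that the relevant quadratic form is coercive; alternatively one can absorb the $-l(t)v$ term directly since $l$ is bounded in $L^\infty$ by \eqref{2.3}. Then, setting $N(t):=\|v(t)\|^2_{L^2(\omega)}$ (or a suitable energy-weighted variant including $\eb^2\|\Dt v(t)\|^2$), I would show that $\log N(t)$ is convex — more precisely that $N$ satisfies a differential inequality of the form $N N'' - (N')^2 \ge -C_1 N N' - C_2 N^2$ on any subinterval where $N>0$. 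This is obtained by differentiating $N$ twice in $t$, using the equation to replace $\eb^2\Dt^2 v$ by $\gamma\Dt v/a - \Dx v + l v /a$ (treating the matrix coefficients via $a+a^*>0$, $\gamma=\gamma^*>0$), and invoking the Cauchy--Schwarz inequality in the cross terms. Because the equation is second order in $t$, the natural ``energy'' couples $v$ and $\Dt v$, so some care is needed to set up the correct frequency function
\begin{equation*}
Q(t):=\frac{\eb^2\|\Dt v(t)\|^2_{L^2}+\<a_+\Nx v(t),\Nx v(t)\>+\dots}{\|v(t)\|^2_{L^2}},
\end{equation*}
and to show $Q$ satisfies a one-sided differential inequality; this is the standard mechanism behind backward uniqueness for elliptic equations in a cylinder, and it forces the zero set of $N$ to be an interval abutting $t=t_0$ — but by forward uniqueness (Theorem~\ref{Th2.1}) the only way $v$ can vanish on a nondegenerate interval is $v\equiv 0$, hence $v(\tau)=0$.

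**The main obstacle** will be handling the non-self-adjoint matrix $a$ and the first-order term $\gamma\Dt v$ cleanly in the convexity computation: unlike the scalar self-adjoint prototype $\Dt^2 v = -\Dx v$, here the sign structure is only $a+a^*>0$ and $\gamma>0$, so the skew part $a_-$ and the damping $\gamma$ produce cross terms that must be controlled by completing squares (as was done for \eqref{2.5} via the quantity $a_+-2a_-(a_+)^{-1}a_-$) and by choosing $\eb\le\eb_0$ small. A secondary technical point is justifying all the integrations by parts and the finiteness of the quantities involved: this is legitimate because $v\in W^{(1,2),p}_{\eb,b}$ with $p>p_{min}$ gives $v\in C_b(\Omega^\tau_+)$, and one can localize in $t$ with the cut-off $\varphi_T$ used earlier and apply the interior regularity of Corollary~\ref{CorA.2} to get enough smoothness for the second $t$-derivative of $N(t)$. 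Once the convexity inequality is in place, the conclusion ``$N(t_0)=0 \Rightarrow N\equiv 0$ on $[\tau,t_0]$'' is immediate from the elementary fact that a nonnegative function whose logarithm is convex cannot vanish at an interior-or-endpoint of its domain without vanishing identically, and then Theorem~\ref{Th2.1} closes the argument.
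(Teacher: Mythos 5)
Your proposal follows exactly the route the paper intends: the paper gives no proof of Theorem \ref{Th2.3} at all, stating only that it ``is based on the logarithmic convexity results (see \cite{1})'' --- i.e.\ the Agmon--Nirenberg convexity/frequency-function argument you outline --- with the details deferred to \cite{27}. Your reduction to the linear homogeneous equation \eqref{2.2} for the difference, followed by backward uniqueness via log-convexity of $\|v(t)\|_{L^2(\omega)}^2$, is precisely that argument (note only that the final step needs no appeal to forward uniqueness: the log-convexity interpolation inequality already forces $N\equiv 0$ on all of $[\tau,t_0]$ once $N(t_0)=0$).
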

The proof of this Theorem is based on the logarithmic convexity
results (see \cite{1}) for solutions of \eqref{1.1} and can be
found, e.g. in \cite{27}.


\section{Uniform attractors and their convergence as $\eb\to0$}\label{s3}

In this section, we construct the global attractors $\Cal A_\eb$
for problems \eqref{1.1} and investigate their behavior as
$\eb\to0$. Since the external forces $g(t)$ in \eqref{1.1} (which
are assumed from now on to be defined on the whole cylinder
$\Omega$ and to belong to the space $L^p_b(\Omega)$) depend
explicitly on $t$, then we use below the skew-product technique in
order to reduce the nonautonomous dynamical process \eqref{2.14}
associated with problem \eqref{1.1} to the autonomous semigroup on
the extended phase space. Following the general procedure (see
\cite{9} and \cite{14}), we define a hull $\Cal H(g)$ of the
external forces $g$ as follows:
\begin{equation}\label{3.1}
\Cal H(g):=\big[T_hg,\,h\in\R\big]_{L^p_{loc,w}(\Omega)},\ \
(T_hg)(t):=g(t+h).
\end{equation}
Here $[\cdot]_{L^p_{loc,w}(\Omega)}$ stands for the closure in the
space $L^p_{loc,w}(\Omega)$ which is the space $L^p_{loc}(\Omega)$
endowed by the weak topology. We recall  that a sequence $g_k\to g$ in
$L^p_{loc,w}(\Omega)$ as $k\to\infty$ if and only if
 $g_k\big|_{\Omega_T}\to g\big|_{\Omega_T}$ weakly in $L^p(\Omega_T)$,
for every $T\in\R$. It is also well-known, that every bounded
subset of $L^p_{loc,w}(\Omega)$ is  precompact and metrizable
 and,
consequently (due to the assumption $g\in L^p_b(\Omega)$), hull
\eqref{3.1} is a compact metrizable subset of $L^p_{loc,w}(\Omega)$.
Thus, a function $\xi(t)$ belongs to $\Cal H(g)$ if and only if there
exists a sequence $\{h_n\}_{n=1}^\infty\in\R$ such that
\begin{equation}\label{3.2}
\xi=\lim_{n\to\infty}T_{h_n}g\ \  \text{in the space
$L^p_{loc,w}(\Omega)$.}
\end{equation}
Moreover, it also obvious that the group $\{T_h,\, h\in\R\}$ of
temporal translations acts on $\Cal H(g)$, i.e.
\begin{equation}\label{3.3}
T_h:\Cal H(g)\to \Cal H(g),\ \ T_h\Cal H(g)=\Cal H(g),\ \ h\in\R.
\end{equation}
In order to construct the attractor of \eqref{1.1}, we consider
the following family of equations of type \eqref{1.1} which correspond to
all external forces $\xi\in\Cal H(g)$:
\begin{equation}\label{3.4}
a(\eb^2\Dt^2 u+\Dx u)-\gamma\Dt u-f(u)=\xi(t),\ \
u\big|_{t=\tau}=u_\tau, \ \ u\DOM=0,\ \xi\in\Cal H(g)
\end{equation}
which generates the family $\{U_\xi^\eb(t,\tau),\ \ \xi\in\Cal H(g)\}$
of dynamical processes in $V^p_\eb(\omega)$ (under the assumptions of
Theorem \ref{Th2.1}). This family of processes generates a semigroup
$\{\Bbb S_t^\eb,\,t\ge0\}$ on the extended phase space
$\Phi_\eb:=V^p_{\eb,w}(\omega)\times \Cal H(g)$
(as usual $V^p_{\eb,w}(\omega)$ denotes the space $V^p_\eb(\omega)$
endowed by the weak topology)
 by the following
 expression:
\begin{equation}\label{3.5}
\Bbb S_t^\eb(u_0,\xi):=(U_\xi^\eb(t,0)u_0,T_t\xi),\ \
\Bbb S_t^\eb:\Phi_\eb\to\Phi_\eb,\ t\ge0,\ (u_0,\xi)\in\Phi_\eb
\end{equation}
(see \cite{9} for the details). Thus, we describe the 'longtime'
behavior of solutions of \eqref{3.4} in terms of the global
attractor of semigroup \eqref{3.5} in the extended phase space
$\Phi_\eb$. For the convenience of the reader, we recall below the
definition of the attractor adapted to our case,
 see e.g. \cite{4}, \cite{9} and \cite{24}
for the detailed exposition.
\begin{definition}\label{Def3.1} A set $\Bbb A_\eb\subset\Phi_\eb$ is a global
attractor for the semigroup $\Bbb S_t^\eb$ if the following conditions
are satisfied:
\par
1.\ The set $\Bbb A_\eb$ is compact in $\Phi_\eb$.
\par
2.\ This set is strictly invariant with respect to $\Bbb S_t$, i.e.
$\Bbb S_t\Bbb A_\eb=\Bbb A_\eb$.
\par
3. For every bounded subset $\Bbb B\subset\Phi_\eb$
and every neighborhood $\Cal O(\Bbb A_\eb)$ of the set $\Bbb A_\eb$ in the
topology of $\Phi_\eb$,
 there exists $T=T(\Bbb B,\Cal O)$ such that
\begin{equation}\label{3.6}
\Bbb S_t^\eb\Bbb B\subset\Cal O(\Bbb A_\eb),\ \text{ for }\ \ t\ge T.
\end{equation}
A projection $\Cal A_\eb:=\Pi_1\Bbb A_\eb$ of the global attractor
$\Bbb A_\eb$ to the first component is called a {\it uniform}
attractor of family  \eqref{3.4}.
\end{definition}
The next theorem establishes the existence of the attractor described
above.
\begin{theorem}\label{Th3.1} Let the assumptions of Theorem \ref{Th2.1} hold and
let $g\in L^p_b(\Omega)$. Then, semigroup \eqref{3.5} possesses a global
attractor $\Bbb A_\eb$ in the phase space $\Phi_\eb$ and,
consequently, family of problems \eqref{3.4} possesses a uniform
attractor $\Cal A_\eb$ which can be described as follows:
\begin{equation}\label{3.7}
\Cal A_\eb=\Pi_0\cup_{\xi\in\Cal H(g)}\Cal K^\eb_\xi,
\end{equation}
where $\Cal K^\eb_\xi$ is a set of all solutions of problem
\eqref{3.4} (with the right-hand side $\xi\in\Cal H(g)$) which are
defined for all $t\in\R$ and belong to $C_b(\R,V^p_\eb(\omega))$ and
$\Pi_0u:=u(0)$.
\end{theorem}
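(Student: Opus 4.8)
The plan is to establish Theorem~\ref{Th3.1} by verifying the abstract criteria for existence of a global attractor for the semigroup $\Bbb S_t^\eb$ on $\Phi_\eb=V^p_{\eb,w}(\omega)\times\Cal H(g)$, namely: (i) $\Bbb S_t^\eb$ is continuous on $\Phi_\eb$ for each fixed $t\ge0$; (ii) $\Bbb S_t^\eb$ possesses a compact absorbing set; after which the attractor is obtained as the standard $\omega$-limit set of that absorbing set, and the structural formula \eqref{3.7} follows from unpacking the definition of the $\omega$-limit set together with invariance. First I would check that the family $\{U^\eb_\xi(t,\tau),\ \xi\in\Cal H(g)\}$ is well defined: each $\xi\in\Cal H(g)$ satisfies $\|\xi\|_{L^p_b(\Omega)}\le\|g\|_{L^p_b(\Omega)}$ since the bound is preserved under weak-$L^p_{loc}$ limits and translations, so Theorems~\ref{Th1.1} and~\ref{Th2.1} apply \emph{uniformly} in $\xi\in\Cal H(g)$ (this is exactly why the hull construction is used), giving existence, uniqueness and the dissipative estimate \eqref{1.6} with constants depending only on $\|g\|_{L^p_b}$.

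Next I would produce the compact absorbing set. By estimate \eqref{1.25} of Corollary~\ref{Cor1.1}, the ball $B_R:=\{u_0\in V^p_\eb(\omega):\|u_0\|_{V^p_\eb}\le R\}\times\Cal H(g)$ with $R:=2Q(\|g\|_{L^p_b})$ is absorbing: any bounded set of initial data is mapped inside it after a finite time depending only on the size of the set. To upgrade to a \emph{compact} absorbing set one applies the smoothing estimate of Corollary~\ref{Cor1.2}: for $T\ge\tau+1$, $U^\eb_\xi(T,\tau)$ maps the $V^p_\eb$-ball into a bounded set of the higher-regularity space $W^{(1,2),p_1}_\eb(\Omega_T)$, hence into a bounded subset of $V^{p_1}_\eb(\omega)$, which (being a bounded set in a strictly higher-order space) is precompact in $V^p_{\eb,w}(\omega)$ --- in fact precompact even in the norm topology of $V^p_\eb(\omega)$, a fortiori in the weak topology. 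Intersecting with $\Cal H(g)$, which is itself compact in $L^p_{loc,w}(\Omega)$ by the remarks after \eqref{3.1}, we obtain a compact set $\Bbb B_0\subset\Phi_\eb$ that absorbs all bounded subsets of $\Phi_\eb$; its closure $\Bbb B:=[\Bbb S_1^\eb\Bbb B_0]_{\Phi_\eb}$ serves as the compact absorbing set.

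The continuity of $\Bbb S_t^\eb$ on $\Phi_\eb$ has two ingredients. Continuity of the translation component $\xi\mapsto T_t\xi$ on $\Cal H(g)$ is immediate from the definition of $L^p_{loc,w}(\Omega)$. For the first component I need: if $(u_0^n,\xi^n)\to(u_0,\xi)$ in $\Phi_\eb$ then $U^\eb_{\xi^n}(t,0)u_0^n\rightharpoonup U^\eb_\xi(t,0)u_0$ weakly in $V^p_\eb(\omega)$. This is the one genuinely non-routine point, and the key obstacle: weak convergence of the data interacts with the nonlinearity $f(u)$. The argument runs by extracting from the solutions $u^n(\cdot):=U^\eb_{\xi^n}(\cdot,0)u_0^n$ a subsequence converging in $W^{(1,2),p}_{\eb,b}$ on each finite subcylinder --- using the uniform bound \eqref{1.6} and the compactness of the embedding $W^{(1,2),p}_\eb(\Omega_T)\hookrightarrow C(\overline{\Omega_T})$ plus local compactness in the interior from Corollary~\ref{Cor1.2} --- so that $f(u^n)\to f(u^\infty)$ strongly in $L^p_{loc}$ by the continuity of $f$ and the uniform $C_b$-bound; one can then pass to the limit in the distributional formulation of \eqref{3.4}, identify $u^\infty$ as the (unique, by Theorem~\ref{Th2.1}) solution with data $u_0$ and right-hand side $\xi$, and conclude that the whole sequence converges; weak continuity at $t=0$ in $V^p_\eb(\omega)$ follows since the trace at each time is controlled by the $W^{(1,2),p}_\eb$ norm on a neighbouring slab.

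Granting (i) and (ii), the abstract theorem (see \cite{4}, \cite{9}, \cite{24}) yields the global attractor $\Bbb A_\eb=\omega(\Bbb B)=\cap_{s\ge0}[\cup_{t\ge s}\Bbb S_t^\eb\Bbb B]_{\Phi_\eb}$, compact and strictly invariant. For the representation \eqref{3.7}: strict invariance means each point of $\Bbb A_\eb$ lies on a complete bounded trajectory of $\Bbb S_t^\eb$, which by the structure \eqref{3.5} of the semigroup is precisely a pair $(u(0),\xi)$ where $u(\cdot)\in C_b(\R,V^p_\eb(\omega))$ solves \eqref{3.4} with right-hand side $\xi\in\Cal H(g)$ for all $t\in\R$ --- i.e.\ $u\in\Cal K^\eb_\xi$; conversely every such trajectory is bounded in $\Phi_\eb$ by \eqref{1.25} and complete, hence contained in $\Bbb A_\eb$. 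Projecting onto the first component via $\Pi_0$ gives $\Cal A_\eb=\Pi_1\Bbb A_\eb=\Pi_0\cup_{\xi\in\Cal H(g)}\Cal K^\eb_\xi$, and the compactness of $\Cal A_\eb$ in $V^p_{\eb,w}(\omega)$ is inherited from that of $\Bbb A_\eb$. Finally one notes, using again \eqref{1.6} and Corollary~\ref{Cor1.2}, that $\Cal A_\eb$ is in fact bounded in $W^{(1,2),p_1}_{\eb,b}$, so the closure in \eqref{3.7} may be understood in a strong topology as well; this last remark is optional and not needed for the stated claim.
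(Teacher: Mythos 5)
Your proposal is correct and follows essentially the same route as the paper: verify that the ball given by the dissipative estimate \eqref{1.25} crossed with $\Cal H(g)$ is a compact absorbing set in $\Phi_\eb$, establish sequential continuity of $\Bbb S^\eb_t$ by extracting limits via the uniform bound \eqref{1.6} and the compact embedding $W^{(1,2),p}_\eb(\Omega_T)\subset C(\Omega_T)$, identifying the limit by the uniqueness of Theorem~\ref{Th2.1}, and then invoke the abstract attractor theorem together with the standard identification of the attractor with complete bounded trajectories to get \eqref{3.7}. The only (harmless) difference is your detour through Corollary~\ref{Cor1.2} to obtain compactness of the absorbing set: since $\Phi_\eb$ carries the \emph{weak} topology on the $V^p_\eb(\omega)$-factor, the closed absorbing ball is already compact by reflexivity, which is exactly why the paper chose that topology.
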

\begin{proof} According to the abstract theorem on the global (and
uniform) attractors existence (see \cite{4}, \cite{9} and
\cite{24}),
 it is sufficient to verify the following conditions:
\par
1.\ The semigroup $\Bbb S_t^\eb$ possesses a compact absorbing set $\Bbb
  B$ in $\Phi_\eb$.
\par
2. The operators $\Bbb S_t^\eb$ are continuous on $\Bbb B$, for every
fixed $t\ge0$.
\par
Let us verify these conditions. It follows from estimate \eqref{1.25}
that the set
\begin{equation}\label{3.8}
\Bbb B:=\{(u_0,\xi)\in\Phi_\eb,\ \|u_0\|_{V^p_\eb(\omega)}\le
 2Q(\|g\|_{L^p_b(\Omega)}),\ \xi\in\Cal H(g)\}
\end{equation}
is an absorbing set for the semigroup $\Bbb S_t^\eb$ (here we have
implicitly used the obvious fact that $\|\xi\|_{L^p_b(\Omega)}\le
 \|g\|_{L^p_b(\Omega)}$, for every $\xi\in\Cal H(g)$). Moreover,
since the space $V^p_\eb(\omega)$ is reflexive, then bounded subsets
of it are precompact in a weak topology. Using the fact that
$\Cal H(g)$ is also compact, we derive that set \eqref{3.8} is
compact in $\Phi_\eb$. Thus, the first condition is verified.
\par
In order to verify the second one, we first note that the set $\Bbb B$
is metrizable, consequently, it is sufficient to verify only the
sequential continuity of $\Bbb S^\eb_t$ on $\Bbb B$. Indeed, let
$(u_0^n,\xi_n)\in\Bbb B$ be an arbitrary (weakly) convergent sequence in $\Bbb
B$ and let $(u_0,\xi_0)\in\Bbb B$ be its (weak) limit. We set
 $u_n(t):=U^\eb_{\xi_n}(t,0)u_0^n$. Then, by definition, these
functions satisfy the equations:
\begin{equation}\label{3.9}
a(\eb^2\Dt^2u_n(t)\!+\!\Dx u_n(t))\!-\!\gamma
\Dt u_n(t)\!-\!f(u_n(t))\!=\!\xi_n(t),\
u_n\big|_{t=0}=u^n_0,\ u_n\DOM=0.
\end{equation}
In order to verify the desired continuity, we need to prove that
$u_n(t)\to u_0(t)$ weakly in $V^p_\eb(\omega)$, for every $t\ge0$,
where $u_0(t):=U^\eb_\xi(t,0)u_0$ is a solution of the limit (as
$n\to\infty$) equation of \eqref{3.9}. We note that the
sequence $u_0^n$ is uniformly bounded in $V^p_\eb(\omega)$ (since it
converges weakly to $u_0$), consequently, due to Theorem \ref{Th1.1}, we
have
\begin{equation}\label{3.10}
\|u_n\|_{W^{(1,2),p}_\eb(\Omega_T)}\le C,
\end{equation}
where $C$ is independent of $T\ge0$ and $n\in\Bbb N$. Therefore,
the sequence of the solutions $u_n(t)$ is precompact in a weak
topology of the space $W^{(1,2),p}_{\eb,loc}(\Omega_+^0)$ (since
this space is reflexive). Let $\bar u:=\bar u(t)\in
W^{(1,2),p}_{\eb,loc}(\Omega_+^0)$ be an arbitrary
limit point of this sequence. Then, due to estimate \eqref{3.10},
the function $\bar u(t)$ belongs to $W^{(1,2),p}_{\eb,b}(\Omega_+^0)$.
Moreover, due to compactness of the embedding
$W^{(1,2),p}_\eb(\Omega_T)\subset C(\Omega_T)$, we have
\begin{equation}\label{3.11}
u_{n_k}\to \bar u,\ \ \text{ strongly in }\ \ C(\Omega_T), \ \
T\in\R_+,
\end{equation}
for the appropriate subsequence $\{n_k\}_{k=1}^\infty\in\Bbb
N$. Passing now to the limit $k\to\infty$ in equations \eqref{3.9}
and using \eqref{3.11} and that $\xi_n\to\xi$ weakly in $L^p_{loc}(\Omega)$,
we derive that $\bar u$ is a bounded solution
of the limit equation of \eqref{3.9}. Since, due to Theorem \ref{Th2.1}, this
solution is unique, then, necessarily,
 $\bar u(t)\equiv u_0(t):=U^\eb_\xi(t,0)u_0$. Moreover, since the
limit point $\bar u$ is arbitrary, then we have proved that $u_n\to
u_0$ weakly in $W^{(1,2),p}_\eb(\Omega_T)$, for every $T\in\R_+$ and,
consequently, $u_n(t)\to u_0(t)$ weakly in $V^p_\eb(\omega)$, for
every $t\in\R_+$. Thus, the second condition of the abstract theorem
on the attractors existence is also verified and, therefore, according
to this theorem, the semigroup $\Bbb S_t^\eb$ possesses indeed the
global attractor $\Bbb A_\eb$ in $\Phi_\eb$ and the family of problems
\eqref{3.4} possesses the uniform attractor $\Cal A_\eb:=\Pi_1\Bbb
A_\eb\in V^p_\eb(\omega)$. Description \eqref{3.7} is also a
standard corollary of that theorem, see \cite{4} and \cite{9}.
Theorem \ref{Th3.1} is proved.
\end{proof}
\begin{remark}\label{Rem3.1} There exists an alternative way to introduce
the concept of the uniform attractor of equation \eqref{1.1} without
using the skew-product flow on the extended phase space $\Phi_\eb$.
Namely, the set $\Cal A_\eb$ is a uniform attractor for equation
\eqref{1.1} if the following conditions are satisfied:
\par
1.\ The set $\Cal A_\eb$ is compact in $V^p_\eb(\omega)$.
\par
2. For every bounded subset $B\subset V^p_\eb(\omega)$ and
every neighborhood $\Cal O(\Cal A_\eb)$ of $\Cal A_\eb$ in a weak
topology of $V^p_\eb(\omega)$ there exists $T=T(B,\Cal O)$ such that
\begin{equation}\label{3.12}
U^\eb_g(t+\tau,\tau)B\subset\Cal O(\Cal A_\eb),\ \ \text{for every
$\tau\in\R$ and $t\ge T$}.
\end{equation}
\par
3.\ The set $\Cal A_\eb$ is a minimal set which satisfies 1) and 2).
\par
The equivalence of this definition to Definition \ref{Def3.1} is proved
in \cite{9}.
\end{remark}
\begin{remark}\label{Rem3.2} If the initial external forces $g$ satisfy the
additional assumption
\begin{equation}\label{3.13}
\Cal H(g)\ \ \text{is compact in a {\it strong} topology
 of $L^p_{loc}(\Omega)$},
\end{equation}
then, arguing in a standard way (see, e.g. \cite{9} and
\cite{26}),
 we can prove that the attractor $\Bbb A_\eb$ attracts the
bounded subsets of $\Phi_\eb$ not only in a {\it weak} topology,
but also in more natural {\it strong} topology
of $\Phi_\eb$ and
$\Cal A_\eb$ is compact in a strong topology of $V^p_\eb(\omega)$.
Nevertheless, we prefer to use the weak topology in Definition \ref{Def3.1},
since the choice of the weak topology is more convenient for what
follows.
\end{remark}
\begin{remark}\label{Rem3.3} Since the embeddings
$V^p_\eb(\omega)\subset V^{p-\delta}_\eb(\omega)$, $\delta>0$ and
$V^p_\eb(\omega)\subset C(\omega)$ are compact, then  \eqref{3.12}
implies the following convergence:
\begin{equation}\label{3.14}
\lim_{t\to\infty}\sup_{\tau\in\R}
\dist_{V^{p-\delta}_\eb(\omega)\cap
C(\omega)}\(U^\eb_g(t+\tau,\tau)B,\Cal A_\eb\)=0,
\end{equation}
for every bounded set $B\subset V^p_\eb(\omega)$ and every
$\delta>0$. Here an below $\dist_V(X,Y)$ denotes the nonsymmetric
Hausdorff distance between sets $X$ and $Y$ in the space~$V$:
\begin{equation}\label{3.15}
\dist_V(X,Y):=\sup\nolimits_{x\in X}\inf\nolimits_{y\in Y}\|x-y\|_{V}.
\end{equation}
\end{remark}
The rest  of this section is devoted to study the behavior of the
attractors $\Cal A_\eb$ as $\eb\to0$. To this end, keeping in mind
 equation \eqref{0.4}, it is convenient to consider  slightly more
general family of equations of the form \eqref{1.1}:
\begin{equation}\label{3.16}
a(\eb^2\Dt^2u+\Dx u)-\gamma\Dt u-f(u)=g_\eb(t),\ u\DOM=0,\ \
u\big|_{t=\tau}=u_\tau,
\end{equation}
where the external forces depend explicitly on $\eb$. We assume
(following \cite{8})
 that
these external forces
are uniformly bounded in
$L^p_b(\Omega)$:
\begin{equation}\label{3.17}
\|g_\eb\|_{L^p_b(\Omega)}\le C,
\end{equation}
where $C$ is independent of $\eb$, and
converge to the limit external forces $g_0\in L^p_b(\Omega)$ in the
following weak sense: for every $\phi\in L^q(\Omega_0)$,
$\frac1q+\frac1p=1$, there exists a function
$\alpha_\phi:\R_+\to\R_+$, such that for all $h\in\R$
\begin{equation}\label{3.18}
\big|\int_{\Omega_0}(g_\eb(t+h,x)-g_0(t+h,x)).\phi(t,x)\,dx\,dt\big|
\le\alpha_\phi(\eb)\ \text{ and }\ \lim_{\eb\to0^+}\alpha_\phi(\eb)=0.
\end{equation}
The main result of this section is the following theorem.
\begin{theorem}\label{Th3.2} Let the assumptions of Theorem \ref{Th3.1} hold and
let, in addition, the external forces $g_\eb(t)$ satisfy
\eqref{3.17} and \eqref{3.18}.
 Let also $\Cal A_\eb$, $0\le\eb\le\eb_0$, be the
uniform attractors of equations \eqref{3.16}. Then, $\Cal A_\eb$
tends to $\Cal A_0$ in the following sense: for every neighborhood
$\Cal O(\Cal A_0)$
of $\Cal A_0$ in a {\rm weak} topology of $V_0^p(\omega)$ there exists
$\eb'=\eb'(\Cal O)$ such that
\begin{equation}\label{3.19}
\Cal A_\eb\subset\Cal O(\Cal A_0),\ \ \text{ if }\ \ \eb\le\eb'.
\end{equation}
\end{theorem}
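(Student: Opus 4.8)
The plan is to establish the upper semicontinuity \eqref{3.19} by a standard compactness-uniqueness argument adapted to the weak topology of $V_0^p(\omega)$. Suppose, for contradiction, that \eqref{3.19} fails. Then there is a neighborhood $\Cal O(\Cal A_0)$ (in the weak topology of $V_0^p(\omega)$), a sequence $\eb_n\to 0^+$ and points $u_n\in\Cal A_{\eb_n}$ with $u_n\notin\Cal O(\Cal A_0)$. By the representation \eqref{3.7} of the uniform attractor, for each $n$ there is $\xi_n\in\Cal H(g_{\eb_n})$ and a complete bounded trajectory $u_n(\cdot)\in C_b(\R,V^p_{\eb_n}(\omega))$ solving \eqref{3.16} (with right-hand side the corresponding translate of $g_{\eb_n}$) and $u_n(0)=u_n$. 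The uniform estimate \eqref{1.6} of Theorem \ref{Th1.1} (which holds for $\eb\in[0,1]$ with constants independent of $\eb$), applied on every window $\Omega_T$, $T\in\R$, and using that by \eqref{3.17} the forcing terms are bounded in $L^p_b(\Omega)$ uniformly in $n$, gives
\begin{equation}\label{PlanEst1}
\|u_n\|_{W^{(1,2),p}_{\eb_n,b}(\Omega)}\le C,
\end{equation}
with $C$ independent of $n$. Since $p>p_{min}$, the embedding $W^{(1,2),p}_{\eb_n}(\Omega_T)\subset C(\Omega_T)$ is uniform in $\eb_n$ (Definition \ref{Def1.1}), so in particular $\|u_n\|_{C_b(\Omega)}\le C$.

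Next I would extract a limit. Fix an exhaustion $\{\Omega_{[-N,N]}\}_{N\in\Bbb N}$ of $\Omega$. From \eqref{PlanEst1}, for each $N$ the restrictions $u_n|_{(-N,N)\times\omega}$ are bounded in $W^{2,p}((-N,N)\times\omega)$ (note $\eb_n^2\Dt^2 u_n+\Dt u_n$ together with the $L^p([-N,N];W^{2,p}(\omega))$ bound controls the full second-order $t$-derivatives only up to the $\eb_n^2$ weight, but the parabolic part of \eqref{1.4} already gives $\|u_n\|_{L^p([-N,N];W^{2,p}(\omega))}+\|\Dt u_n\|_{L^p}$ bounded, which suffices) — more precisely, the parabolic component of the $W^{(1,2),p}_{\eb_n}$-norm is bounded independently of $\eb_n$, and $\eb_n^2\Dt^2 u_n\to 0$ in $L^p_{loc}$. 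By a diagonal argument and the compactness of the embeddings $W^{(1,2),p}_{\eb_n}(\Omega_T)\subset C(\Omega_T)$ and $W^{(1,2),p}\hookrightarrow W^{(1,2),p-\delta}$, I pass to a subsequence (not relabelled) with $u_n\to\bar u$ strongly in $C_{loc}(\Omega)$ and weakly in $W^{(1,2),p}_{0,loc}(\Omega)$, and simultaneously, using compactness of the hulls and \eqref{3.18}, $\xi_n\to g_0$ in $L^p_{loc,w}(\Omega)$ (here one checks that any weak-$L^p_{loc}$ limit of translates $T_{h_n}g_{\eb_n}$ coincides with the corresponding translate of $g_0$: the translates of $g_0$ form a bounded, hence weakly precompact, family, and \eqref{3.18} forces $T_{h_n}g_{\eb_n}$ and $T_{h_n}g_0$ to have the same limit along any subsequence — one may even argue that it is enough that $g_0$ is itself autonomous in the almost-periodic case, but \eqref{3.18} handles the general situation). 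Passing to the limit in \eqref{3.16}: the terms $a\eb_n^2\Dt^2 u_n\to 0$ in $L^p_{loc}$, $a\Dx u_n\to a\Dx\bar u$ weakly, $\gamma\Dt u_n\to\gamma\Dt\bar u$ weakly, $f(u_n)\to f(\bar u)$ strongly in $C_{loc}$ (by the $C_{loc}$-convergence of $u_n$ and continuity of $f$), and $\xi_n\to g_0$ weakly; hence $\bar u$ is a complete bounded solution of the limit parabolic problem \eqref{0.7} (i.e. of \eqref{3.16} with $\eb=0$ and forcing $g_0$, or rather its appropriate translate). By the estimate \eqref{1.6} with $\eb=0$, $\bar u\in C_b(\R,V_0^p(\omega))$, so by the characterization \eqref{3.7} of $\Cal A_0$ we get $\bar u(0)\in\Cal A_0$.

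Finally I would derive the contradiction. The $C_{loc}$-convergence $u_n\to\bar u$ gives $u_n(0)\to\bar u(0)$ in $C(\omega)$; combined with the uniform $V^p_{\eb_n}(\omega)$-bound from \eqref{1.25} (Corollary \ref{Cor1.1}) and the embedding $V^p_{\eb_n}(\omega)\subset V_0^p(\omega)$ (uniform in $\eb_n$, cf. \eqref{2.28}), we get that $u_n(0)\to\bar u(0)$ weakly in $V_0^p(\omega)$ — indeed a bounded sequence in a reflexive space with a known limit in a weaker topology ($C(\omega)$) converges weakly to that limit. Since $\bar u(0)\in\Cal A_0$ and $\Cal O(\Cal A_0)$ is a weak neighborhood, $u_n(0)=u_n\in\Cal O(\Cal A_0)$ for $n$ large, contradicting the choice of $u_n$. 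This proves \eqref{3.19}.

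The main obstacle is the bookkeeping around the $\eb$-dependence of the function spaces: one must make sure that all a priori bounds, embeddings, and trace estimates used (Theorem \ref{Th1.1}, Corollary \ref{Cor1.1}, the embeddings in \eqref{2.28}) are genuinely uniform as $\eb\to0$, and that the limit object $\bar u$ lands in the correct $\eb=0$ space $V_0^p(\omega)$ rather than in some $\eb$-dependent space; this is precisely why Definition \ref{Def1.1} was set up with the $\eb$-weighted norms and why the uniform estimates of Section \ref{s1} were proved with $\eb$-independent constants. A secondary point requiring care is the passage $\xi_n\to g_0$: one needs \eqref{3.18} to guarantee that the weak limit of the translated perturbed forces is a translate of $g_0$ and not some other element picked up from the hulls $\Cal H(g_{\eb_n})$, and one must check that this limiting forcing still belongs to (a translate consistent with) the setup of \eqref{0.7}.
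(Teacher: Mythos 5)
Your proof is correct and follows essentially the same route as the paper: uniform bounds from Theorem \ref{Th1.1} and the description \eqref{3.7}, strong $C_{loc}$-convergence via the compact embedding $W^{(1,2),p}_\eb(\Omega_T)\subset C(\Omega_T)$, passage to the limit in the equation, and identification of the limit as a point of $\Cal A_0$ (the paper phrases this as a direct sequential characterization rather than a contradiction, which is cosmetic). The only imprecision is your identification of the limit forcing as ``a translate of $g_0$'': the $\xi_n$ are arbitrary elements of the hulls $\Cal H(g_{\eb_n})$, and the correct conclusion (Lemma \ref{Lem3.1} in the paper) is that the weak limit lies in $\Cal H(g_0)$, which is exactly what the characterization \eqref{3.7} of $\Cal A_0$ requires.
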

\begin{proof} The proof of this theorem is based on the following lemma
which clarifies the nature of convergence \eqref{3.18}.
\begin{lemma}\label{Lem3.1} Let functions $g_\eb$, $0\le\eb\le1$, belong
to $L^p_b(\Omega)$ and satisfy \eqref{3.17} and \eqref{3.18}.
 Then, the following
conditions hold:
\par
1. The functions $g_\eb(t)$ converges to $g_0(t)$ weakly in
 $L^p_{loc}(\Omega)$ and the set $\cup_\eb\Cal H(g_\eb)$ is
weakly precompact in $L^p_{loc}(\Omega)$.
\par
2. For every sequences $\eb_n\to0$ and $\xi_n\in\Cal H(g_{\eb_n})$
such that $\xi_n\to\xi$ weakly in $L^p_{loc}(\Omega)$, the function
$\xi$ necessarily belongs to $\Cal H(g_0)$ and
\begin{equation}\label{3.20}
\big|\int_{\Omega_0}(\xi_n(t+h,x)-\xi(t+h,x)).\phi(t,x)\,dt\,dx\big|
\le\alpha_\phi(\eb_n),
\end{equation}
for every $\phi\in L^{p'}_b(\Omega_0)$ and every $h\in\R$
where $\frac1p+\frac1{p'}=1$.
\par
3. The convergence \eqref{3.18} is uniform with respect to $\phi$
belonging to compact sets in $L^{p'}(\Omega_0)$, i.e. the function
$\alpha_\phi$ in \eqref{3.18} can be chosen in such way that
\begin{equation}\label{3.21}
\alpha_V(\eb):=\sup\nolimits_{\phi\in V}\alpha_\phi(\eb)\to0,\ \ \text{ as
$\eb\to0^+$},
\end{equation}
for every compact subset $V\subset L^{p'}(\Omega_0)$.
\end{lemma}
The assertion of the lemma can be proved in a standard way, using
the representation \eqref{3.2} for functions belonging to the
halls $\Cal H(g_\eb)$ and basic properties of the weak convergence
in reflexive spaces, see \cite{8} and \cite{9}.
\par
We are now ready to prove Theorem \ref{Th3.2}. We first note that, due to
Theorem~\ref{Th1.1}, Corollary \ref{Th1.1} and estimate \eqref{3.17}, we have
\begin{equation}\label{3.22}
\|\Cal A_\eb\|_{V^p_\eb(\omega)}+
\sup_{\xi\in\Cal H(g_\eb)}\|\Cal
K^\eb_\xi\|_{W^{(1,2),p}_{\eb,b}(\Omega)}\le C,
\end{equation}
where the constant $C$ is independent of $\eb$. Thus, in order
to prove the theorem, it is sufficient to verify that, if
$u_n^0\in\Cal A_{\eb_n}$, $\eb_n\to0$ as $n\to\infty$,
 be an arbitrary sequence which converges weakly
in $V_0^p(\omega)$ to some $u_0\in V^p_0(\omega)$, then $u_0\in\Cal
A_0$ (see \cite{9}). Taking into account description \eqref{3.7},
estimates \eqref{3.17} and \eqref{3.22}
 and
the weak compactness of bounded sets in reflexive spaces, this
assertion can be reformulated as follows: if $\eb_n\to0$,
$\xi_n\in\Cal H(g_{\eb_n})$ and $u_n\in\Cal K^{\eb_n}_{\xi_n}$ be
arbitrary sequences such that $u_n\to u$ weakly in
 $W^{(1,2),p}_{0,loc}(\Omega)$ and $\xi_n\to\xi$ weakly in
$L^p_{loc}(\Omega)$, then $\xi\in\Cal H(g_0)$ and $u\in \Cal K^0_\xi$.
Let us verify this assertion. Indeed, the fact that $\xi\in \Cal H(g_0)$ is
an immediate corollary of Lemma \ref{Lem3.1}. Thus, there only remains to pass
to the weak limit in the following equations:
\begin{equation}\label{3.23}
a(\eb^2_n u_n(t)+\Dx u_n(t))-\gamma\Dt u_n(t)-f(u_n(t))=\xi_n(t),\
t\in\R,\ \ u_n\DOM=0.
\end{equation}
We recall that the embedding $W^{(1,2),p}_0(\Omega_T)\subset
C(\Omega_T)$ is compact, consequently, the weak convergence $u_n\to u$
in $W^{(1,2),p}_{loc}(\Omega)$ implies the strong convergence
 $u_n\to u$ in $C_{loc}(\Omega)$. Passing now to the limit
$n\to\infty$ in \eqref{3.23}, we derive that the function
 $u\in W^{(1,2),p}_{0,b}(\Omega)$ and satisfies
$$
a\Dx u(t)-\gamma\Dt u(t)-f(u(t))=\xi(t),\  \ t\in\R
$$
and, consequently $u\in \Cal K^0_\xi$ and Theorem \ref{Th3.1} is proved.
 \end{proof}
\begin{remark}\label{Rem3.4} Since the embedding
$V^p_0(\omega)\subset V^{p-\delta}_0(\omega)\cap C(\omega)$ is
compact, then \eqref{3.19} implies that
\begin{equation}\label{3.24}
\lim_{\eb\to0}\dist_{V^{p-\delta}_0(\omega)\cap C(\omega)}\(\Cal
A_\eb,\Cal A_0\)=0,
\end{equation}
for every $\delta>0$.
\end{remark}
To conclude this section, we consider the applications of Theorem \ref{Th3.2}
to equation \eqref{0.4} and, consequently, we assume from now on that
\begin{equation}\label{3.25}
g_\eb(t):=g(\eb^{-1}t), \ \ \text{for some } g\in L^p_b(\Omega).
\end{equation}
 \begin{example}\label{Ex3.1} Let the assumptions of Theorem \ref{Th3.1} hold,
\eqref{3.25} be satisfied and the function $g\in L^p_b(\Omega)$ have
the following heteroclinic profile structure: there exist
$g_{\pm}:=g_{\pm}(x)\in L^p(\omega)$ such that
\begin{equation}\label{3.26}
\lim_{h\to\pm\infty}\|T_hg-g_{\pm}\|_{L^p(\Omega_0)}=0.
\end{equation}
Then, obviously, $g_\eb\to g_0$ as $\eb\to0$ in $L^p_b(\Omega)$, where
\begin{equation}\label{3.27}
g_0(t):=\begin{cases} g_+, &  \text{for $t\ge0$,}\\
         g_-, &  \text{for $t<0$}
  \end{cases}
\end{equation}
and, consequently, \eqref{3.17} and \eqref{3.18} are
 also satisfied. Thus, due to
Theorem \ref{Th3.2}, the uniform attractors $\Cal A_\eb$ of equations
\eqref{3.16} (or, which is the same, the attractors of \eqref{0.4})
tend as $\eb\to0$ (in the sense of \eqref{3.19}) to the uniform
attractor of the limit parabolic equation with the external
 forces \eqref{3.27}. In particular, if $g_+=g_-$ then the limit
parabolic problem is autonomous.
\end{example}
In order to consider the next examples, we need the following
proposition which is adopted to the study  of oscillating
in time external forces $g_\eb$ in \eqref{3.16}.
\begin{proposition}\label{Prop3.1} Let $g\in L^p_b(\Omega)$ and $g_\eb$ be
defined by \eqref{3.25}. We also assume that there exists $\bar
g=\bar g(x)\in L^p(\omega)$ such that
\begin{equation}\label{3.28}
\frac1T\int_t^{t+T}g(s)\,ds\to\bar g \text{ in $L^p(\omega)$ as $T\to\infty$},
\end{equation}
 uniformly
with respect to $t\in\R$.
Then, the functions $g_\eb(t):=g(\eb^{-1}t)$, $\eb\ne0$ and
$g_0(t)\equiv \bar g$ satisfy conditions \eqref{3.17} and \eqref{3.18}.
\end{proposition}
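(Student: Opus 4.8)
The plan is to verify the two conditions \eqref{3.17} and \eqref{3.18} directly from the hypothesis \eqref{3.28}. Condition \eqref{3.17} is essentially free: since $g_\eb(t)=g(\eb^{-1}t)$, the change of variables $s=\eb^{-1}t$ shows that for any interval $[T,T+1]$ the $L^p(\Omega_T)$-norm of $g_\eb$ equals (up to a power of $\eb$ absorbed by a covering argument, using $\eb\le1$) a bound controlled by $\|g\|_{L^p_b(\Omega)}$; concretely $\|g_\eb\|_{L^p_b(\Omega)}\le C\|g\|_{L^p_b(\Omega)}$ because the interval $(\eb^{-1}T,\eb^{-1}(T+1))$ of length $\eb^{-1}\ge1$ can be covered by $\lceil\eb^{-1}\rceil$ unit intervals, each contributing at most $\|g\|_{L^p_b}^p$, and then the prefactor $\eb$ from the Jacobian cancels the number of intervals. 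So the first condition holds with $C$ independent of $\eb$.

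The substance is condition \eqref{3.18}. Fix $\phi\in L^q(\Omega_0)$ with $1/p+1/q=1$ and $h\in\R$. After the substitution $s=\eb^{-1}(t+h)$ one must estimate
\begin{equation*}
\Big|\int_0^1\int_\omega\big(g(\eb^{-1}(t+h),x)-\bar g(x)\big).\phi(t,x)\,dx\,dt\Big|.
\end{equation*}
The idea is to split $[0,1]$ into $N$ subintervals $I_j=[j/N,(j+1)/N]$ of length $1/N$, with $N=N(\eb)\to\infty$ chosen so that $\eb^{-1}/N\to\infty$ as well (e.g. $N=\lfloor\eb^{-1/2}\rfloor$). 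On each $I_j$ replace $\phi(t,x)$ by its average $\bar\phi_j(x):=N\int_{I_j}\phi(t,x)\,dt$ in the time variable; the error is controlled by the $L^p$-continuity of translations / the modulus of continuity of $\phi$ in $L^q$, which tends to $0$ as $N\to\infty$ uniformly since $\phi$ is a fixed function. For the main term, on each $I_j$ the inner time-integral $\int_{I_j}g(\eb^{-1}(t+h),x)\,dt$ becomes, after scaling, $\eb\int_{J_j}g(\sigma,x)\,d\sigma$ over an interval $J_j$ of length $\eb^{-1}/N\to\infty$; by \eqref{3.28} (with the $L^p(\omega)$ convergence there applied to estimate $\|\cdot\|_{L^p(\omega)}$ and pairing against $\bar\phi_j\in L^q(\omega)$) this is close to $\bar g(x)/N$, with an error bounded by a quantity $\beta(\eb^{-1}/N)\|\bar\phi_j\|_{L^q}$ where $\beta(T)\to0$ is the uniform rate of convergence in \eqref{3.28}. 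Summing over $j$ and using $\sum_j\|\bar\phi_j\|_{L^q}\le C\|\phi\|_{L^q}$ (Hölder/Jensen) gives a bound of the form $\alpha_\phi(\eb):=\omega_\phi(1/N(\eb))+C\|\phi\|_{L^q}\,\beta(\eb^{-1}/N(\eb))$, where $\omega_\phi$ is the $L^q$-modulus of continuity of $\phi$ in time. Both terms tend to $0$ as $\eb\to0^+$ by the choice of $N(\eb)$, which is exactly \eqref{3.18}. A key point is that this bound is independent of $h$, because the uniformity in $t\in\R$ in \eqref{3.28} makes the estimate of $\eb\int_{J_j}g$ insensitive to where the interval $J_j$ sits.

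The main obstacle is bookkeeping the two competing error terms so that the shift parameter $h$ really drops out and the final $\alpha_\phi(\eb)$ is a genuine function of $\eb$ alone. The tension is that making $N$ large helps the $\phi$-averaging error $\omega_\phi(1/N)$ but one needs $\eb^{-1}/N$ still large for the ergodic-type term $\beta(\eb^{-1}/N)$; any choice like $N=\lfloor\eb^{-1/2}\rfloor$ (so that $1/N\to0$ and $\eb^{-1}/N\sim\eb^{-1/2}\to\infty$) resolves this. One should also note that \eqref{3.18} is only required for the fixed reference cylinder $\Omega_0=(0,1)\times\omega$ with $\phi$ running over $L^q(\Omega_0)$, so no additional uniformity in the test function is needed at this stage (the upgrade to compact families of $\phi$ is Lemma \ref{Lem3.1}, proved separately). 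Once \eqref{3.17} and \eqref{3.18} are established, the proposition follows, and $g_0\equiv\bar g$ is the limit; in particular the limit parabolic equation \eqref{0.7} is autonomous, recovering \eqref{0.9}.
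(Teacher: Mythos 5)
Your argument is sound, but note that the paper itself offers no proof of Proposition \ref{Prop3.1} --- it simply refers to \cite{8} and \cite{9}. What you have written is essentially the standard averaging proof from those references, made quantitative: the step-function approximation of $\phi$ is realized concretely as the piecewise time-average $\bar\phi_j$ on a mesh of size $1/N(\eb)$, and the choice $N(\eb)\sim\eb^{-1/2}$ balances the two competing errors (the $L^q$-approximation error of $\phi$, which wants $N$ large, against the Ces\`aro error $\beta(\eb^{-1}/N)$ from \eqref{3.28}, which wants $\eb^{-1}/N$ large). The references instead fix a step function first and pass to the limit $\eb\to0$ for it, then remove the approximation by density; your version buys an explicit $\alpha_\phi(\eb)$ and makes the $h$-independence transparent, at no real extra cost. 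Two small points of bookkeeping should be repaired. First, the per-interval main-term error is $\tfrac1N\,\beta(\eb^{-1}/N)\,\|\bar\phi_j\|_{L^q(\omega)}$, not $\beta(\eb^{-1}/N)\,\|\bar\phi_j\|_{L^q(\omega)}$, since $\int_{I_j}g(\eb^{-1}(t+h),\cdot)\,dt=|I_j|$ times the Ces\`aro mean over $J_j$; correspondingly, the inequality $\sum_j\|\bar\phi_j\|_{L^q(\omega)}\le C\|\phi\|_{L^q(\Omega_0)}$ is false as stated (take $\phi$ independent of $t$: the left side is $N\|\phi\|$ up to normalization) --- the correct statement is $\tfrac1N\sum_j\|\bar\phi_j\|_{L^q(\omega)}\le\|\phi\|_{L^q(\Omega_0)}$ by Jensen and H\"older. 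The two missing factors of $N$ cancel, so your final bound $C\|\phi\|_{L^q}\,\beta(\eb^{-1}/N)$ is right, but as written the intermediate steps do not compose. Second, when you discard the approximation error $\phi-\bar\phi_j$ you are implicitly pairing it against $g_\eb(\cdot+h)-\bar g$, so you need the uniform bound $\|g_\eb(\cdot+h)-\bar g\|_{L^p(\Omega_0)}\le C$ from \eqref{3.17} (which you have already established); it is worth saying so explicitly, since that is exactly where the $h$-independence of this error term comes from.
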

For the proof of Proposition \ref{Prop3.1}, see  \cite{8} or \cite{9}.
\begin{example}\label{Ex3.2} Let the assumptions of Theorem \ref{Th3.1} hold,
\eqref{3.25} be satisfied and the function $g$ belong to
$C_b(\R,L^p(\omega))$ and be almost-periodic with respect to $t$
with values in $L^p(\omega)$ (the latter means that the hull $\Cal
H(g)$ is compact in $C_b(\R,L^p(\omega))$, according to the
Bochner-Amerio criterium, see \cite{18}). Then, assumption
\eqref{3.28} is satisfied, due to the Kronecker-Weyl theorem, see
\cite{18}. Thus, the uniform attractors $\Cal A_\eb$ of elliptic
problems \eqref{3.16} with the rapidly oscillating external forces
 $g_\eb(t):=g(\eb^{-1}t)$ ($g$ is now almost-periodic) converge as
 $\eb\to0$ to the global attractor $\Cal A_0$ of the limit
 autonomous parabolic equation with the averaged external forces
 $g_0\equiv \bar g$.
\end{example}
In conclusion,  we give an example of oscillating external forces
$g\in L^p_b(\Omega)$ which are not almost-periodic with respect to
time, but satisfy the assumptions of Proposition \ref{Prop3.1} (see
\cite{9} for further examples).
\begin{example}\label{Ex3.3} Let $g_1(t)$ and $g_2(t)$ be two {\it different}
$1$-periodic functions with respect to $t$ which belong to
$L^p_b(\Omega)$ and have zero mean. We set
\begin{equation}\label{3.29}
g(t):=\begin{cases} g_1(t),&;\text{for $t\in[4k^2,(2k+1)^2)$ and $k\in\Bbb Z$,}\\
       g_2(t),&;\text{for $t\in[(2k-1)^2,4k^2)$ and $k\in\Bbb Z$.}
      \end{cases}
\end{equation}
Then, obviously, this function is not almost-periodic with respect to
$t$ (even in the case where $g_1$ and $g_2$ are smooth), but condition
\eqref{3.28} is obviously satisfied
with $\bar g=0$,  since the periodic functions
$g_1$ and $g_2$ have zero mean. Thus, in this case, the attractors
$\Cal A_\eb$ of equations \eqref{3.16} with non almost-periodic rapidly
oscillating external forces $g_\eb(t):=g(\eb^{-1}t)$ converge as
$\eb\to$ to the attractor $\Cal A_0$ of the limit parabolic equation with
zero external forces.
\end{example}

\section{Local convergence as $\eb\to0$ of the individual solutions}\label{s4}

In this section, we obtain several auxiliary results on the
convergence of the solution $u_\eb(t):=U^\eb_{g_\eb}(t,\tau)u_\tau$ as
$\eb\to0$ to the corresponding solution
$u_0(t):=U^0_{g_0}(t,\tau)u_\tau$ of the limit parabolic problem
which will be essentially used in the next sections. We also assume
(for simplicity) that condition \eqref{3.18} is satisfied with
the {\it autonomous} limit function $g_0\equiv \bar g\in L^p(\omega)$.
Then, equations \eqref{3.16} converge as $\eb\to0$ to the following
autonomous reaction-diffusion problem:
\begin{equation}\label{4.1}
\gamma\Dt u_0=a\Dx u_0-f(u_0)+\bar g,\ \ u_0\DOM=0,\ \
u_0\big|_{t=\tau},
\end{equation}
which generates a dissipative semigroup $S_t:=U^0_{\bar g}(t,0)$
in the phase space $V^p_0(\omega)$ and possesses the global attractor
 $\Cal A_0\subset V^p_0(\omega)$, see Theorems \ref{Th1.1}, \ref{Th2.1} and \ref{Th3.1}. The
following theorem gives the estimate for the  $L^2(\omega)$-norm of
 distance between
$U^\eb_{g_\eb}(t,\tau)$ and $S_{t-\tau}$.
\begin{theorem}\label{Th4.1} Let the assumptions of Theorem \ref{Th3.2} hold,
$p>2p_{min}$ and $g_0(t)\equiv \bar g\in L^p(\omega)$. Then, for every
$R>0$, there exist a function $\alpha_R:\R_+\to\R_+$,
$\lim_{\eb\to0}\alpha_R(\eb)=0$ and a positive constant $K$
 such that, for every $\eb\le\eb'_0$,
$h_\eb\in\Cal H(g_\eb)$, $\tau\in\R$, $t\ge\tau$ and $u_\tau\in V^p_\eb(\omega)$
satisfying $\|u_\tau\|_{V^p_0(\omega)}\le R$, the following
estimate holds:
\begin{equation}\label{4.2}
\|U^\eb_{h_\eb}(t,\tau)u_\tau-S_{t-\tau}u_\tau\|_{L^2(\omega)}\le
\alpha_R(\eb)e^{K(t-\tau)}.
\end{equation}
\end{theorem}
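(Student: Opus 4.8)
The plan is to write the difference $w_\eb(t):=U^\eb_{h_\eb}(t,\tau)u_\tau-S_{t-\tau}u_\tau$ as a solution of a parabolic equation with small (in a suitable weak sense) right-hand side, and then use a standard energy estimate for parabolic systems with a quasimonotone nonlinearity. Setting $u_\eb(t):=U^\eb_{h_\eb}(t,\tau)u_\tau$ and $u_0(t):=S_{t-\tau}u_\tau$, we subtract equation \eqref{4.1} for $u_0$ from equation \eqref{3.16} (with right-hand side $h_\eb$) for $u_\eb$. The key observation is that the second-order-in-$t$ term $a\eb^2\Dt^2 u_\eb$ and the discrepancy $h_\eb(t)-\bar g$ can both be moved to the right-hand side, so that $w_\eb$ satisfies
\begin{equation*}
\gamma\Dt w_\eb-a\Dx w_\eb+[f(u_\eb)-f(u_0)]=a\eb^2\Dt^2 u_\eb+(h_\eb(t)-\bar g),\quad w_\eb\DOM=0,\ w_\eb\big|_{t=\tau}=0.
\end{equation*}

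Next I would run the standard $L^2(\omega)$ energy estimate: multiply this equation by $w_\eb(t)$, integrate over $\omega$, and use $\gamma=\gamma^*>0$, $a+a^*>0$, and the quasimonotonicity $f'(v)\ge-K$ (via $\big([f(u_\eb)-f(u_0)].w_\eb\big)\ge-K|w_\eb|^2$ pointwise). This gives a Gronwall-type inequality
\begin{equation*}
\tfrac{d}{dt}\big(\gamma w_\eb.w_\eb\big)_{L^2(\omega)}\le C_1\big(\gamma w_\eb.w_\eb\big)_{L^2(\omega)}+C_2\big(\eb^2(a\Dt^2 u_\eb(t),w_\eb(t))_{L^2}+(h_\eb(t)-\bar g,w_\eb(t))_{L^2}\big),
\end{equation*}
whence $\|w_\eb(t)\|_{L^2(\omega)}^2\le e^{C_1(t-\tau)}\int_\tau^t e^{-C_1(s-\tau)}\big[\eb^2|(a\Dt^2 u_\eb(s),w_\eb(s))|+|(h_\eb(s)-\bar g,w_\eb(s))|\big]ds$, with $C_1,C_2$ depending only on $a,\gamma,K$. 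All uniform-in-$\eb$ a priori bounds needed here — in particular $\|u_\eb\|_{W^{(1,2),p}_{\eb,b}}\le Q(R)$, hence $\eb^2\|\Dt^2 u_\eb\|_{L^p_b}\le Q(R)$ and $\|w_\eb\|_{C_b}\le Q(R)$ — follow from Theorem \ref{Th1.1} and estimate \eqref{1.6}, using $\|u_\tau\|_{V^p_\eb}\le C\|u_\tau\|_{V^p_0}\le CR$ via \eqref{2.28}; note $h_\eb\in\Cal H(g_\eb)$ satisfies the same bounds as $g_\eb$.

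It then remains to show the two integral terms are $o(1)$ as $\eb\to0$, uniformly on bounded $t-\tau$-intervals after absorbing the exponential growth. For the term $\eb^2\int_\tau^t|(a\Dt^2 u_\eb(s),w_\eb(s))|\,ds$ one integrates by parts in $s$ to throw one $\Dt$ onto $w_\eb$ (using $w_\eb(\tau)=0$); the resulting $\eb^2\int|(a\Dt u_\eb,\Dt w_\eb)|$ plus boundary terms is $O(\eb^2 Q(R))\cdot(t-\tau)$ by the same a priori bounds (here $\eb^2\|\Dt u_\eb\|,\ \eb^2\|\Dt^2 u_\eb\|$ are all $O(Q(R))$, and $\Dt w_\eb=\Dt u_\eb-\Dt u_0$ is controlled likewise), so this contribution is bounded by $C\eb^2 e^{K(t-\tau)}$. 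For the forcing term, the crucial point is Lemma \ref{Lem3.1}(3): the convergence $h_\eb\to\bar g$ in \eqref{3.18} is uniform over $\phi$ in compact subsets of $L^{p'}(\Omega_0)$, and by parabolic smoothing the family $\{w_\eb\big|_{\Omega_j}\}$ restricted to unit time-slabs is relatively compact in $L^{p'}$ — indeed $\{w_\eb\}$ is bounded in $W^{(1,2),p}_{b}$ hence precompact in $C_{loc}$ — so $\int_\tau^t|(h_\eb(s)-\bar g,w_\eb(s))|\,ds\le (t-\tau)\cdot\alpha_{V(R)}(\eb)$ for a compact set $V(R)$ depending only on $R$. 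Combining, $\|w_\eb(t)\|_{L^2(\omega)}^2\le \big(C\eb^2+C\alpha_{V(R)}(\eb)\big)e^{K(t-\tau)}$, which is \eqref{4.2} after setting $\alpha_R(\eb):=\big(C\eb^2+C\alpha_{V(R)}(\eb)\big)^{1/2}$ and enlarging $K$.

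\textbf{Main obstacle.} The delicate point is making the bound on the oscillatory forcing term genuinely uniform: one must show that the test functions $w_\eb(s,\cdot)$ (time-shifted to unit slabs) range over a \emph{fixed} compact subset of $L^{p'}$ independent of $\eb$, so that Lemma \ref{Lem3.1}(3) applies with one function $\alpha_{V(R)}$. This requires the uniform-in-$\eb$ parabolic-type a priori estimate \eqref{1.6} together with the compact embedding $W^{(1,2),p}_\eb(\Omega_T)\subset C(\Omega_T)$ — both of which are in place — but the bookkeeping of the time-shift $h_\eb\in\Cal H(g_\eb)$ and the passage to slabs must be done carefully. The $\eb^2\Dt^2$ term, by contrast, is harmless: the weight of $\eb$ in the norm \eqref{1.4} was chosen precisely so that $\eb^2\Dt^2 u_\eb$ is uniformly bounded in $L^p_b$, so its contribution is $O(\eb^2)$ and disappears in the limit.
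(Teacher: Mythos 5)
Your proposal follows essentially the same route as the paper's proof: the same difference equation for $w_\eb=u_\eb-u_0$ with $a\eb^2\Dt^2u_\eb$ and $h_\eb-\bar g$ on the right-hand side, the same $L^2(\omega)$ energy/Gronwall argument using $f'\ge-K$ and $\gamma=\gamma^*>0$, integration by parts in $t$ for the $\eb^2\Dt^2u_\eb$ term, and Lemma \ref{Lem3.1}(3) combined with compactness of the time-shifted differences in $L^{p'}(\Omega_0)$ to make the oscillatory forcing term uniformly small. The only (harmless) discrepancy is quantitative: the boundary term $\eb^2(a\Dt u_\eb(t),w_\eb(t))$ produced by the integration by parts is only $O(\eb)$, since the available uniform bound is $\eb^3\|\Dt u_\eb(t)\|_{L^2(\omega)}^2\le C_R$ (see \eqref{4.5}) rather than a pointwise-in-$t$ bound on $\Dt u_\eb(t)$ itself, so your claimed $O(\eb^2)$ contribution should be $O(\eb)$ — which does not affect the conclusion, as the theorem only requires $\alpha_R(\eb)\to0$.
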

\begin{proof} We set $u_\eb(t):=U^\eb_{h_\eb}(t,\tau)u_\tau$,
$u_0(t):=S_{t-\tau}u_\tau$ and $v_\eb(t):=u_\eb(t)-u_0(t)$. Then, the
last function satisfies
\begin{equation}\label{4.3}
\gamma\Dt v_\eb-a\Dx v_\eb+l_\eb(t)v_\eb=
a\eb^2\Dt^2u_\eb(t)+(h_\eb(t)-\bar g),\ \ v_\eb\DOM=0,\
v_\eb\big|_{t=\tau}=0,
\end{equation}
where $l_\eb(t):=\int^1_0f'(su_\eb(t)+(1-s)u_0(t))\,dt$. Multiplying
equation \eqref{4.3} by $v_\eb(t)$, integrating over $(\tau,t)\times\Omega$
and using that $l_\eb(t)\ge-K$ and $\gamma=\gamma^*>0$, we have
\begin{multline}\label{4.4}
\|v(t)\|_{L^2(\omega)}^2-K_1\int_\tau^t\|v(s)\|_{L^2(\omega)}^2\,ds\le\\
\le C\eb^2\int^t_\tau\(\|\Dt u_\eb(s)\|_{L^2(\omega)}^2+
\|\Dt u_0(s)\|_{L^2(\omega)}^2\)\,ds+C\eb^4\|\Dt
u_\eb(t)\|_{L^2(\omega)}^2+\\+
C\big|\int^t_\tau\int_{\omega}(h_\eb(s)-\bar g).v(s)\,ds\,dx\big|,
\end{multline}
where the constants $C$ and $K_1$ depend only on $\gamma$, $K$ and $a$. We now
note that, due to the assumption $p>2p_{min}$, we may apply estimate
\eqref{1.6} with the exponent $p/2>2$ instead of $p$ to equations
\eqref{3.16}. Then, using Lemma \ref{LemA.1}, estimate \eqref{3.17}
 and embedding \eqref{2.28}, we
have
\begin{equation}\label{4.5}
\|u_\eb\|_{W^{1,2}(\Omega_T)}^2+\|u_0\|_{W^{1,2}(\Omega_T)}^2+
\eb^3\|\Dt u_\eb(T)\|_{L^2(\omega)}^2+\|u_\eb\|_{L^\infty(\Omega_T)}^2+
\|u_0\|_{L^\infty(\Omega_T)}^2
\le C_R,
\end{equation}
where the constant $C_R$ is independent on
$\|u_\tau\|_{V_0^p(\omega)}$ (which satisfies
$\|u_\tau\|_{V^p_0(\omega)}\le R$), $\eb\le\eb_0$, $h_\eb\in\Cal
H(g_\eb)$, $\tau\in\R$ and $t\ge\tau$. Thus, there only remains to
estimate the last term in \eqref{4.4}. To this end, we recall that
the limit function $g_0(t)\equiv \bar g$ in \eqref{3.18} is now
independent of $t$. Consequently, Lemma \ref{Lem3.1} implies that every
sequence $h_{\eb_k}\in\Cal H(g_{\eb_k})$, $\eb_k\to0$ converges weakly
in $L^p_{loc}(\Omega)$ to the mean value $\bar g$ and (due to
\eqref{3.20}), for every $\phi\in L^{p'}(\Omega_0)$, there exists
$\alpha_\phi(\eb)\to0$ as $\eb\to0$ such that
\begin{equation}\label{4.6}
\big|\int_0^1 \int_\omega(h_\eb(t+s,x)-\bar g(x)).\phi(t,x)\,dx\,dt\big|
\le\alpha_\phi(\eb),\ \ \forall s\in\R \text{ and } h_\eb\in\Cal H(g_\eb).
\end{equation}
Moreover, this convergence is uniform with respect to $\phi$ belonging
to compact sets in $L^{p'}(\Omega_0)$. We now note that, due to
\eqref{4.5}, the set of functions $\{T_sv_\eb,\,s\ge\tau, \,
\|u_\tau\|_{V^p_0(\omega)}\le R,\ h_\eb\in\Cal H(g_\eb),\eb\le\eb_0'\}$
is bounded in $W^{1,2}(\Omega_0)$ and, consequently, it is compact
in $L^{p'}(\Omega_0)$
(we recall that $\frac1p+\frac1{p'}=1$ and $p>2$, consequently, $p'<2$).
 Therefore, \eqref{4.6} implies that
\begin{equation}\label{4.7}
\big|\int_{\tau+s}^{\tau+s+1}
\int_\omega(h_\eb(t,x)-\bar g).v_\eb(t,x)\,dx\,dt\big| \le\tilde\alpha_R(\eb),
\end{equation}
where $\tilde\alpha_R(\eb)\to0$ as $\eb\to0$ uniformly with respect to
$h_\eb\in\Cal H(g_\eb)$, $s\in\R_+$, $\eb\le\eb_0'$ and
 $\|u_\tau\|_{V^p_0(\omega)}\le R$. Inserting estimates \eqref{4.5}
and \eqref{4.7} to \eqref{4.4}, we have
\begin{equation}\label{4.8}
\|v(t)\|_{L^2(\omega)}^2-K_1\int_\tau^t\|v(s)\|_{L^2(\omega)}^2\,ds\le
C_R(t-\tau)\eb+C(t-\tau)\tilde\alpha_R(\eb).
\end{equation}
Applying the Gronwall inequality to estimate \eqref{4.8}, we finish
the proof of Theorem~\ref{Th4.1}.
\end{proof}
The following corollary reformulates estimate \eqref{4.2} in terms
of discrete cascades \eqref{2.27} acting on the phase space
$V^p_0(\omega)$.
\begin{corollary}\label{Cor4.1} Let the assumptions of Theorem \ref{Th4.1} hold and
let, in addition, the external forces $g_\eb$ be uniformly bounded
in $L^{p+\delta}_b(\Omega)$, for some $\delta>0$, i.e.
\begin{equation}\label{4.9}
\|g_\eb\|_{L^{p+\delta}_b(\Omega)}\le C,
\end{equation}
where $C$ is independent of $\eb$. Then, the following estimate is
valid:
\begin{equation}\label{4.10}
\|U^\eb_{h_\eb}(l,m)u_m-S_{l-m}u_m\|_{V^p_0(\omega)}\le
e^{K'(l-m)}\bar\alpha_R(\eb),\ \ \|v_m\|_{V^p_0(\omega)}\le R,
\end{equation}
where the constant $K'$ and the function $\bar\alpha_R$
($\bar\alpha_R(\eb)\to0$ as $\eb\to 0^+$) are independent of
$\eb\le\eb_0$, $h_\eb\in\Cal H(g_\eb)$, $u_m\in V^p_0(\omega)$ and $l,m\in\Bbb Z$
(with $l\ge m$).
\end{corollary}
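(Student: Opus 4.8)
The goal is to upgrade the $L^2(\omega)$-convergence of Theorem \ref{Th4.1} to a $V^p_0(\omega)$-convergence for the discrete cascades, using the extra integrability \eqref{4.9} and the smoothing properties of the limit parabolic semigroup. The first step is to reduce the estimate over a step of length $l-m$ to a single-step estimate: by the cocycle property \eqref{2.15} and its analogue for the parabolic semigroup, one telescopes the difference $U^\eb_{h_\eb}(l,m)u_m - S_{l-m}u_m$ into a sum of terms in which only one factor differs, so it suffices to estimate $\|U^\eb_{h_\eb}(m+1,m)u - S_1 u\|_{V^p_0(\omega)}$ for bounded $u$ and then iterate, picking up the exponential factor $e^{K'(l-m)}$ from the uniform Lipschitz bounds \eqref{2.1} (respectively the Lipschitz continuity of $S_t$), and absorbing the initial sum into $\bar\alpha_R(\eb)$.

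The heart of the argument is thus the one-step bound. Here I would combine the $L^2$-estimate \eqref{4.2} with a parabolic smoothing/regularity argument. Set $v_\eb:=U^\eb_{h_\eb}(t,\tau)u_\tau - S_{t-\tau}u_\tau$; as in the proof of Theorem \ref{Th4.1}, $v_\eb$ solves the linear parabolic problem \eqref{4.3} with right-hand side $a\eb^2\Dt^2 u_\eb(t) + (h_\eb(t)-\bar g)$ and zero initial data. Theorem \ref{Th4.1} gives $\|v_\eb(t)\|_{L^2(\omega)}\le\alpha_R(\eb)e^{K(t-\tau)}$. By the uniform a priori estimate \eqref{1.6} with exponent $p/2$ (using $p>2p_{min}$) and the improved integrability \eqref{4.9}, one obtains, via Corollary \ref{Cor1.2}, that $\eb^2\Dt^2 u_\eb$ is bounded in $L^{p}_b$ (uniformly in $\eb$), while $h_\eb-\bar g$ is bounded in $L^{p+\delta}_b$; so the right-hand side of \eqref{4.3} is uniformly bounded in $L^p_b$. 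Interpolating this uniform $L^p_b$ bound on the inhomogeneity against the $\alpha_R(\eb)$-small $L^2$ control on $v_\eb$ itself, and feeding the result into the maximal $L^r$-regularity estimate for the parabolic operator $\gamma\Dt - a\Dx$ (the $\eb=0$ case of Corollary \ref{CorA.1}), one bootstraps: each iteration trades a fixed amount of smoothness/integrability while keeping the smallness $\bar\alpha_R(\eb)\to0$, exactly as in the iteration at the end of the proof of Theorem \ref{Th1.1}. After finitely many steps one reaches $W^{(1,2),p}_0(\Omega_{m+1})$, whose trace at $t=m+1$ is $V^p_0(\omega)$, yielding the one-step bound. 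The shift-invariance and the uniformity of all constants in $\tau\in\R$, $h_\eb\in\Cal H(g_\eb)$ and $\eb\le\eb_0$ are inherited from Theorem \ref{Th1.1}, Corollary \ref{Cor1.2} and Lemma \ref{Lem3.1}.

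A technical point worth isolating: the inhomogeneous term $h_\eb-\bar g$ is only \emph{weakly} small (it does not go to zero in any $L^p$ norm), so one cannot simply bound the parabolic solution by the norm of the right-hand side. The device is the one already used for \eqref{4.7}: test $h_\eb-\bar g$ against the relevant (compact family of) functions and invoke the uniform weak convergence from Lemma \ref{Lem3.1}(3). Concretely, in each bootstrap step the contribution of $h_\eb-\bar g$ to $v_\eb$ must be handled not through a crude norm bound but through a duality pairing against a test function lying in a fixed compact subset of $L^{p'}(\Omega_0)$, using that the iterates $v_\eb$ (and their derivatives, controlled by the previous step) form a precompact family. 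I expect this interplay — carrying the weak-smallness through the parabolic regularity iteration, rather than a norm-smallness — to be the main obstacle; everything else is a routine combination of the uniform estimates already established in Sections \ref{s1}--\ref{s3} together with standard parabolic maximal regularity.
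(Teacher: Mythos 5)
Your proposal goes a much longer way round than necessary and, at its core, contains a step that does not work. The paper's argument is two lines: since $g_\eb$ is uniformly bounded in $L^{p+\delta}_b(\Omega)$, Theorem \ref{Th1.1} and Corollary \ref{Cor1.2} applied with exponent $p+\delta$ give (analogously to \eqref{2.29}) the \emph{uniform boundedness} $\|U^\eb_{h_\eb}(l,m)u_m-S_{l-m}u_m\|_{V^{p+\delta}_0(\omega)}\le C_R$; combining this with the $L^2$-smallness \eqref{4.2} via the interpolation inequality $\|w\|_{V^p_0(\omega)}\le C\|w\|_{L^2(\omega)}^{\kappa_\delta}\|w\|_{V^{p+\delta}_0(\omega)}^{1-\kappa_\delta}$ yields \eqref{4.10} with $\bar\alpha_R(\eb)\sim\alpha_R(\eb)^{\kappa_\delta}$. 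This is the whole point of the hypothesis \eqref{4.9}: the extra $\delta$ of integrability buys a bound in a space \emph{strictly stronger} than $V^p_0(\omega)$, so that interpolation against the small $L^2$ norm produces smallness in $V^p_0(\omega)$ itself. No telescoping, no bootstrap, and no propagation of smallness through maximal regularity is needed.

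The genuine gap in your alternative is the bootstrap itself. The right-hand side of \eqref{4.3} is $a\eb^2\Dt^2u_\eb+(h_\eb-\bar g)$, and \emph{neither} term is small in any $L^r$-norm: by \eqref{1.6} one only gets $\eb^2\|\Dt^2u_\eb\|_{L^p(\Omega_T)}\le C$ (bounded, not $o(1)$), and $h_\eb-\bar g$ is, as you note, only weakly small. Consequently no maximal regularity estimate for $\gamma\Dt-a\Dx$ can return a small $W^{(1,2),r}$-norm for $v_\eb$; it returns only a uniform bound. Your proposed fix --- carrying the weak smallness through each regularity step by duality pairings --- does not resolve this: the duality device of \eqref{4.6}--\eqref{4.7} works in the $L^2$ energy identity precisely because there the solution $v_\eb$ itself is the test function (and the $\eb^2\Dt^2u_\eb$ term is integrated by parts against $\Dt v_\eb$, producing an explicit factor $\eb^2$ via \eqref{4.5}); a maximal $L^r$-regularity estimate is a norm inequality with no such pairing structure, so weak smallness of the forcing cannot be converted into norm smallness of the solution in $W^{(1,2),r}$ this way. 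In short, smallness is available only in $L^2(\omega)$, boundedness is available in $V^{p+\delta}_0(\omega)$, and the only mechanism that combines them into smallness in $V^p_0(\omega)$ is interpolation --- which is exactly the step your proposal is missing.
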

\begin{proof} Since the functions $g_\eb$ are assumed to be uniformly
bounded in $L^{p+\delta}_b(\omega)$, then (replacing the exponent $p$
by $p+\delta$) we derive from Theorem \ref{Th1.1} and Corollary \ref{Cor1.2}
(analogously to \eqref{2.29}) that
\begin{equation}\label{4.11}
\|U^\eb_{h_\eb}(l,m)u_m-S_{l-m}u_m\|_{V^{p+\delta}_0(\omega)}\le C_R,
\end{equation}
where the constant $C_R$ is independent of $\eb$, $h_\eb$, $l,m$ and
$u_m$. Estimate \eqref{4.10} is an immediate corollary of
\eqref{4.2}, \eqref{4.11} and the following interpolation inequality:
\begin{equation}\label{4.12}
\|w\|_{V^p_0(\omega)}\le C\|w\|_{L^2(\omega)}^{\kappa_\delta}
\cdot\|w\|_{V^{p+\delta}_0(\omega)}^{1-\kappa_\delta},
\end{equation}
for the appropriate $0<\kappa_\delta<1$ (see, e.g. \cite{25}) and
Corollary \ref{Cor4.1} is proved.
\end{proof}
Our next task is to obtain the analogue of Theorem \ref{Th4.1} and Corollary
\ref{Cor4.1} for the Frechet derivatives of the processes
$U^\eb_{g_\eb}(t,\tau)$.
\begin{theorem}\label{Th4.2} Let the assumptions of Theorem \ref{Th4.1} hold.
Then, for every $R\in\R_+$ and $u_\tau$,
$\|u_\tau\|_{V^p_0(\omega)}\le R$,
the following estimate is valid:
\begin{equation}\label{4.13}
\|D_u U^\eb_{h_\eb}(t,\tau)(u_\tau)-D_u S_{t-\tau}(u_\tau)\|_{\Cal
L(V^p_0(\omega), L^2(\omega))}\le  e^{K''(t-\tau)}\alpha_R(\eb),
\end{equation}
where the constant $K''$ and the function $\alpha_R$
($\alpha_R(\eb)\to0$ as $\eb\to 0^+$) are independent of
$\eb\le\eb_0$, $h_\eb\in\Cal H(g_\eb)$, $u_\tau\in V^p_0$ and $t,\tau\in\Bbb R$
(with $t\ge \tau$).
\end{theorem}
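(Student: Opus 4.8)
The plan is to run the argument of Theorem~\ref{Th4.1} at the level of the equations of variations. Since $p>2p_{min}$, the embeddings \eqref{2.28} give $V^p_0(\omega)\subset V^{p/2}_\eb(\omega)$, so every estimate of Sections~\ref{s1}--\ref{s2} may be invoked with the exponent $p/2>p_{min}$ in place of $p$; in particular, by Corollary~\ref{Cor2.2} applied with exponent $p/2$, the operator $D_uU^\eb_{h_\eb}(t,\tau)(u_\tau)$ is well defined on $V^p_0(\omega)$. Fix $u_\tau$ with $\|u_\tau\|_{V^p_0(\omega)}\le R$ and $\xi$ with $\|\xi\|_{V^p_0(\omega)}\le1$, and set $u_\eb(t):=U^\eb_{h_\eb}(t,\tau)u_\tau$, $u_0(t):=S_{t-\tau}u_\tau$, $v_\eb(t):=D_uU^\eb_{h_\eb}(t,\tau)(u_\tau)\xi$ (the solution of \eqref{2.16} with $v_\tau=\xi$), and $w_0(t):=D_uS_{t-\tau}(u_\tau)\xi$, i.e. the solution of the linearized parabolic problem $\gamma\Dt w_0-a\Dx w_0+f'(u_0(t))w_0=0$, $w_0\DOM=0$, $w_0\big|_{t=\tau}=\xi$. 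From Lemma~\ref{Lem2.1} with $h\equiv0$, estimate \eqref{2.17}, and the weighted energy estimates of Section~\ref{s1} (applied with exponent $p/2$, as in the derivation of \eqref{4.5}), I would first record the uniform-in-$\eb$ bounds $\|v_\eb\|_{W^{(1,2),2}_\eb(\Omega_T)}+\eb^{3/2}\|\Dt v_\eb(T)\|_{L^2(\omega)}\le Ce^{\Lambda_0(T-\tau)}$, together with the corresponding parabolic bounds $\|w_0(t)\|_{V^p_0(\omega)}\le Ce^{\Lambda_0(t-\tau)}$ and $\|w_0\|_{W^{(1,2),2}_0(\Omega_T)}\le Ce^{\Lambda_0(T-\tau)}$, all with constants independent of $\eb$, $h_\eb\in\Cal H(g_\eb)$ and $\tau$.

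Subtracting the linearized parabolic equation from the equation of variations, rewritten (as in \eqref{4.3}) with the term $a\eb^2\Dt^2 v_\eb$ on the right-hand side, the difference $\theta_\eb(t):=v_\eb(t)-w_0(t)$ satisfies $\theta_\eb\big|_{t=\tau}=0$, $\theta_\eb\DOM=0$ and
\begin{equation*}
\gamma\Dt\theta_\eb-a\Dx\theta_\eb+f'(u_\eb(t))\theta_\eb=a\eb^2\Dt^2 v_\eb(t)+\big(f'(u_0(t))-f'(u_\eb(t))\big)w_0(t).
\end{equation*}
Multiplying by $\theta_\eb(t)$, integrating over $(\tau,t)\times\omega$ and using $\gamma=\gamma^*>0$, $a+a^*>0$ and $f'(u_\eb)\ge-K$ exactly as in \eqref{4.4}, one is led to $\|\theta_\eb(t)\|_{L^2(\omega)}^2\le C\int_\tau^t\|\theta_\eb(s)\|_{L^2(\omega)}^2\,ds+R_1(\eb,t)+R_2(\eb,t)$, where $R_1$ and $R_2$ are the contributions of the two right-hand side terms. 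The term $R_2$ is routine: since $u_\eb,u_0$ are bounded in $L^\infty(\Omega_T)$ uniformly in $\eb$ by \eqref{4.5} and $f\in C^2$, one has $\|\big(f'(u_0(s))-f'(u_\eb(s))\big)w_0(s)\|_{L^2(\omega)}\le C\|u_0(s)-u_\eb(s)\|_{L^2(\omega)}\|w_0(s)\|_{L^\infty(\omega)}$; by Theorem~\ref{Th4.1} the first factor is $\le\alpha_R(\eb)e^{K(s-\tau)}$, while $V^p_0(\omega)\subset C(\omega)$ bounds the second by $Ce^{\Lambda_0(s-\tau)}$, so a Cauchy--Schwarz in time gives $R_2\le\tfrac14\int_\tau^t\|\theta_\eb(s)\|_{L^2(\omega)}^2\,ds+C\alpha_R(\eb)^2e^{K'(t-\tau)}$.

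The one genuine obstacle is $R_1$: the factor $\eb^2\|\Dt^2 v_\eb\|_{L^2}$ is only $O(1)$, so a direct Cauchy--Schwarz yields nothing. The key is to integrate by parts in time,
\begin{equation*}
\eb^2\int_\tau^t(a\Dt^2 v_\eb,\theta_\eb)\,ds=\eb^2(a\Dt v_\eb(t),\theta_\eb(t))-\eb^2\int_\tau^t(a\Dt v_\eb,\Dt v_\eb-\Dt w_0)\,ds,
\end{equation*}
the endpoint at $s=\tau$ vanishing since $\theta_\eb(\tau)=0$; this gains one power of $\eb$. Indeed, the endpoint term at $s=t$ is $\le\mu\|\theta_\eb(t)\|_{L^2(\omega)}^2+C_\mu\eb\big(\eb^{3/2}\|\Dt v_\eb(t)\|_{L^2(\omega)}\big)^2$ for arbitrarily small $\mu$, and the remaining integrals are $\le C\eb^2\int_\tau^t\big(\|\Dt v_\eb(s)\|_{L^2(\omega)}^2+\|\Dt w_0(s)\|_{L^2(\omega)}^2\big)\,ds$; by the energy bounds recorded in the first paragraph, summed over unit $t$-intervals, all of these are $\le\mu\|\theta_\eb(t)\|_{L^2(\omega)}^2+C\eb\,e^{K''(t-\tau)}$. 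Choosing $\mu$ small enough to absorb the $\|\theta_\eb(t)\|_{L^2(\omega)}^2$ contributions into the left-hand side, one obtains $\|\theta_\eb(t)\|_{L^2(\omega)}^2\le C\int_\tau^t\|\theta_\eb(s)\|_{L^2(\omega)}^2\,ds+C\big(\eb+\alpha_R(\eb)^2\big)e^{K''(t-\tau)}$; Gronwall's inequality, taking square roots, and then the supremum over $\|\xi\|_{V^p_0(\omega)}\le1$ yield \eqref{4.13} with the error function $\eb\mapsto C\big(\eb^{1/2}+\alpha_R(\eb)\big)$, which tends to $0$ as $\eb\to0^+$.
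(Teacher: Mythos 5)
Your proposal is correct and follows essentially the same route as the paper's proof of Theorem~\ref{Th4.2}: the same uniform bounds (the paper's \eqref{4.15}, including the $\eb^3\|\Dt w_\eb(T)\|_{L^2(\omega)}^2$ control that you write as $\eb^{3/2}\|\Dt v_\eb(T)\|_{L^2(\omega)}$), the same equation for the difference $\theta_\eb$, the same $L^2$ energy estimate with integration by parts in time to gain a power of $\eb$ from the $a\eb^2\Dt^2$ term (which the paper performs implicitly by saying ``analogously to \eqref{4.4}''), the same use of Theorem~\ref{Th4.1} for the $f'$-difference term, and Gronwall at the end. The resulting error function $C(\eb^{1/2}+\alpha_R(\eb))$ matches what \eqref{4.17} yields.
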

\begin{proof} We set $w_\eb(t):=D_uU^\eb_{h_\eb}(t,\tau)(u_\tau)\xi$ and
$w_0(t):=D_u S_{t-\tau}(u_\tau)\xi$, where $\xi\in V^p_0(\omega)$ is
an arbitrary vector. Then, according to Theorem \ref{Th2.2},
 these functions satisfy the equations
\begin{equation}\label{4.14}
\begin{cases}
a(\eb^2\Dt^2 w_\eb+\Dx w_\eb)-\gamma\Dt w_\eb-f'(u_\eb(t))w_\eb=0,\
w_\eb\DOM=0,\  w_\eb\big|_{t=\tau}=\xi,\\
a\Dx w_0-\gamma\Dt w_0-f'(u_0(t))w_0=0,\ \
w_0\DOM=0,\ \  w_0\big|_{t=\tau}=\xi,
\end{cases}
\end{equation}
where $u_0(t)$ and $u_\eb(t)$ are the same as in the proof of Theorem
\ref{Th4.1}. Then, according to Lemma \ref{Lem2.1}, embeddings \eqref{2.28} and
Corollary \ref{CorA.2}, analogously to \eqref{4.5}, we have
\begin{multline}\label{4.15}
\|w_\eb\|_{W^{1,2}(\Omega_T)}^2+\eb^3\|\Dt w_\eb(T)\|_{L^2(\omega)}^2+
\|w_0\|_{W^{1,2}(\Omega_T)}^2+\\+
\|w_0\|_{L^\infty(\Omega_T)}^2+\|w_\eb\|_{L^\infty(\Omega_T)}^2\le
C_R e^{2\Lambda_0(T-\tau)}\|\xi\|_{V^p_0(\omega)}^2,
\end{multline}
where the constant $C_R$ is independent of $\eb\le\eb_0'$, $h_\eb\in\Cal
H(g_\eb)$, $\xi\in V^p_0(\omega)$, $u_\tau\in V^p_0(\omega)$
(which satisfies $\|u_\tau\|_{V^p_0(\omega)}\le R$), $\tau\in\R$
and $T\ge\tau$.
\par
We now set $\theta_\eb(t):=w_\eb(t)-w_0(t)$. Then, this function
satisfies
$$
\gamma\Dt\theta_\eb-a\Dx\theta_\eb+f'(u_\eb(t))\theta_\eb=
a\eb^2\Dt^2w_\eb(t)-[f(u_\eb(t))-f(u_0(t))]w_\eb(t),\ \
\theta_\eb\big|_{t=\tau}=0.
$$
Multiplying this equation by $\theta_\eb(t)$, integrating over
 $(T,\tau)\times\omega$ and using that $f'\ge-K$ and the functions
$u_\eb(t)$ and $u_0(t)$ are uniformly bounded in the $L^\infty$-norm,
we have (analogously to \eqref{4.4})
\begin{multline}\label{4.16}
\|\theta_\eb(t)\|_{L^2(\omega)}^2-
K_3\int^T_\tau\|\theta_\eb(t)\|_{L^2(\omega)}^2\,dt\le\\\le
C\eb^2\int^T_\tau\(\|\Dt w_\eb(t)\|_{L^2(\omega)}^2+
\|\Dt w_0(t)\|_{L^2(\omega)}^2\)\,dt+C\eb^4\|\Dt
w_\eb(T)\|_{L^2(\omega)}^2+\\+
C_R\int^T_\tau\|u_\eb(t)-u_0(t)\|_{L^2(\omega)}^2
\|w_\eb(t)\|_{L^\infty(\omega)}^2\,dt,
\end{multline}
where the constants $C$, $K_3$ and $C_R$ are independent of $\eb$ and
$T$. Inserting estimates \eqref{4.2} and \eqref{4.15} to the
right-hand side of \eqref{4.16}, we have
\begin{equation}\label{4.17}
\|\theta_\eb(T)\|_{L^2(\omega)}^2-K_3
\int^T_\tau\|\theta_\eb(t)\|_{L^2(\omega)}^2\,dt\le
C'_R(T-\tau)(\eb+\alpha_R(\eb)^2)
e^{4\Lambda_0(T-\tau)}\|\xi\|_{V^p_0(\omega)}^2.
\end{equation}
Applying the Gronwall inequality to this estimate, we derive estimate
\eqref{4.13} (with the appropriate new function $\alpha_R$) and
finish the proof of Theorem \ref{Th4.2}.
\end{proof}
The following corollary is the analogue of Corollary \ref{Cor4.1} for the
Frechet derivatives.
\begin{corollary}\label{Cor4.2} Let the assumptions of Theorem \ref{Th4.1} hold and
let, in addition, the external forces $g_\eb$ be uniformly bounded
in $L^{p+\delta}_b(\Omega)$, for some $\delta>0$ (i.e., \eqref{4.9}
be satisfied)
Then, the following estimate is
valid, for every $R\in\R_+$ and $u_m\in V_0^p(\omega)$ with
$\|u_m\|_{V^p_0(\omega)}\le R$:
\begin{equation}\label{4.18}
\|D_uU^\eb_{h_\eb}(l,m)(u_m)-D_uS_{l-m}(u_m)
\|_{\Cal L(V^p_0(\omega),V_0^p(\omega))}\le
e^{K''(l-m)}\bar\alpha_R(\eb),
\end{equation}
where the constant $K''$ and the function $\bar\alpha_R$
($\bar\alpha_R(\eb)\to0$ as $\eb\to 0^+$) are independent of
$\eb\le\eb_0$, $h_\eb\in\Cal H(g_\eb)$, $u_m\in V^p_0$ and $l,m\in\Bbb Z$
(with $l\ge m$).
\end{corollary}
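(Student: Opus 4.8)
The plan is to combine the continuous-time estimate of Theorem~\ref{Th4.2} with the higher-integrability bound implied by assumption~\eqref{4.9}, exactly in the spirit of the proof of Corollary~\ref{Cor4.1}. The only new ingredient compared with Corollary~\ref{Cor4.1} is that we now compare the \emph{linearizations} $D_uU^\eb_{h_\eb}(l,m)(u_m)$ and $D_uS_{l-m}(u_m)$ rather than the processes themselves, but the structure of the argument is identical.

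First I would fix $R>0$, $u_m$ with $\|u_m\|_{V^p_0(\omega)}\le R$, an arbitrary direction $\xi\in V^p_0(\omega)$, and set $w_\eb(t):=D_uU^\eb_{h_\eb}(t,m)(u_m)\xi$, $w_0(t):=D_uS_{t-m}(u_m)\xi$ as in the proof of Theorem~\ref{Th4.2}. These satisfy the linear equations~\eqref{4.14}. Since $g_\eb$ is uniformly bounded in $L^{p+\delta}_b(\Omega)$, we may run the whole machinery of Section~\ref{s2} with the exponent $p+\delta$ in place of $p$: Lemma~\ref{Lem2.1} (with $p+\delta$) together with the uniform interior regularity estimate of Corollary~\ref{CorA.2} and the uniform trace embeddings~\eqref{2.28} yields, in exact analogy with~\eqref{4.11},
\begin{equation*}
\|w_\eb(l)-w_0(l)\|_{V^{p+\delta}_0(\omega)}\le C_R\,e^{2\Lambda_0(l-m)}\|\xi\|_{V^p_0(\omega)},
\end{equation*}
where $C_R$ is independent of $\eb\le\eb_0'$, $h_\eb\in\Cal H(g_\eb)$, $\xi$, $u_m$ and $l,m\in\Bbb Z$ with $l\ge m$. (Here I use, as in~\eqref{4.15}, that Lemma~\ref{Lem2.1} gives the exponential-in-$(l-m)$ control of the $W^{(1,2),p+\delta}_\eb$-norm on each unit cylinder, hence of the trace at $t=l$.) Meanwhile Theorem~\ref{Th4.2} provides
\begin{equation*}
\|w_\eb(l)-w_0(l)\|_{L^2(\omega)}\le e^{K''(l-m)}\alpha_R(\eb)\|\xi\|_{V^p_0(\omega)}.
\end{equation*}

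Next I would interpolate between these two bounds using~\eqref{4.12}: there is $\kappa_\delta\in(0,1)$ with $\|w\|_{V^p_0(\omega)}\le C\|w\|_{L^2(\omega)}^{\kappa_\delta}\|w\|_{V^{p+\delta}_0(\omega)}^{1-\kappa_\delta}$, so that
\begin{equation*}
\|w_\eb(l)-w_0(l)\|_{V^p_0(\omega)}\le C\,\big(e^{K''(l-m)}\alpha_R(\eb)\big)^{\kappa_\delta}\big(C_R e^{2\Lambda_0(l-m)}\big)^{1-\kappa_\delta}\|\xi\|_{V^p_0(\omega)}.
\end{equation*}
Setting $K''':=\kappa_\delta K''+(1-\kappa_\delta)2\Lambda_0$ and $\bar\alpha_R(\eb):=C\,C_R^{1-\kappa_\delta}\alpha_R(\eb)^{\kappa_\delta}$ (which still tends to $0$ as $\eb\to0^+$), and taking the supremum over $\|\xi\|_{V^p_0(\omega)}\le1$, one obtains~\eqref{4.18} with the constant $K''$ of the statement relabelled as $K'''$ (or, keeping the symbol $K''$, absorbing constants). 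The uniformity of $\bar\alpha_R$ in $\eb$, $h_\eb$, $u_m$ and $l,m$ is inherited from the corresponding uniformities in Theorem~\ref{Th4.2} and in the estimate~\eqref{4.11}-analogue above.

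I do not expect a genuine obstacle here; the argument is a routine interpolation bootstrap, word for word parallel to the proof of Corollary~\ref{Cor4.1}. The only point requiring a little care is the first, higher-integrability step: one must check that Lemma~\ref{Lem2.1} and the interior regularity estimate~\eqref{A.28} are indeed available with exponent $p+\delta$ and with constants independent of $\eb$ — but this is guaranteed because the hypotheses of Theorem~\ref{Th4.1} (hence of Theorem~\ref{Th2.2}) hold and $p+\delta>p>2p_{min}$, so all of Sections~\ref{s1}--\ref{s2} apply verbatim with $p$ replaced by $p+\delta$. Hence the proof consists in assembling these pieces and I would present it in the compact ``Indeed, \dots'' style used for the preceding corollaries.
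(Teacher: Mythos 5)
Your proposal is correct and follows essentially the same route as the paper: the authors likewise obtain a uniform $\Cal L(V^p_0(\omega),V^{p+\delta}_0(\omega))$-bound on the difference of the Frechet derivatives from \eqref{4.15} and Corollary \ref{CorA.2} with exponent $p+\delta$, and then interpolate with the $L^2$-estimate \eqref{4.13} of Theorem \ref{Th4.2} via \eqref{4.12}. The only cosmetic difference is the exponential rate in the intermediate bound ($e^{\Lambda_0(l-m)}$ in the paper versus your $e^{2\Lambda_0(l-m)}$), which is harmlessly absorbed into the constant $K''$.
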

Indeed, it follows from \eqref{4.15} and Corollary \ref{CorA.2}
(where the exponent $p$ is replaced by $p+\delta$)
 that
\begin{equation}\label{4.19}
\|D_uU^\eb_{h_\eb}(l,m)(u_m)-D_uS_{l-m}(u_m)
\|_{\Cal L(V^p_0(\omega),V_0^{p+\delta}(\omega))}\le
C_R'e^{\Lambda_0(l-m)},
\end{equation}
 where $l\ge m+1$ and  the constant $C'_R$ is independent of $\eb$, $l$ and
$m$. Estimate \eqref{4.18} is an immediate corollary of
\eqref{4.19}, \eqref{4.13} and \eqref{4.12}.
\par
To conclude this section, we investigate the dependence of functions
$\alpha_R(\eb)$ and $\bar\alpha_R(\eb)$ which are introduced in
Theorems \ref{Th4.1}-\ref{Th4.2} and Corollaries \ref{Cor4.1}-\ref{Cor4.2} respectively on $\eb\to0$.
For simplicity, we assume that the external forces $g_\eb(t)$
satisfy \eqref{3.25} (i.e., $g_\eb(t):=g(\eb^{-1}t)$) and
and $g\in L^{p+\delta}_b(\Omega)$.
\begin{theorem}\label{Th4.3} Let the assumptions of Theorem \ref{Th4.1} hold and
\eqref{3.25} be satisfied for some $g\in L^{p+\delta}_b(\Omega)$,
$\delta>0$. Assume also that the function $g(t)-\bar g$ has a bounded
primitive, i.e.
\begin{equation}\label{4.20}
g(t)-\bar g=\partial_t G(t),
\end{equation}
for some $G\in L^2_b(\omega)$. Then, the functions $\alpha_R(\eb)$
 in Theorems \ref{Th4.1} and \ref{Th4.2} and the functions $\bar\alpha_R(\eb)$
in Corollaries \ref{Cor4.1} and \ref{Cor4.2} have the following structure:
\begin{equation}\label{4.21}
\alpha_R(\eb):=C_R\eb^{1/2},\ \
\bar\alpha_R(\eb):=\bar C_R\eb^{\kappa_\delta/2},
\end{equation}
where $\kappa_\delta$ is the same as in \eqref{4.12} and the
constants $C_R$ and $\bar C_R$ depend on $R$, but are independent of
$\eb$.
\end{theorem}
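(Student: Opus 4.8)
The plan is to revisit the proofs of Theorems \ref{Th4.1} and \ref{Th4.2} and replace the "soft" estimate \eqref{4.7} (which only yields an unquantified $\tilde\alpha_R(\eb)\to0$) by an explicit power of $\eb$, exploiting the extra structure \eqref{4.20}. The only place where a non-quantitative ingredient entered was the term $\int_{\tau+s}^{\tau+s+1}\int_\omega (h_\eb(t,x)-\bar g(x)).v_\eb(t,x)\,dx\,dt$ in \eqref{4.4}; everything else there was already a clean power of $\eb$ (namely the $C_R(t-\tau)\eb$ coming from the $\eb^2\Dt^2 u_\eb$ and $\eb^2\Dt u_0$, $\eb^2\Dt u_\eb$ terms via \eqref{4.5}). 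So the heart of the matter is to show that under \eqref{4.20} this mixed term is $O(\eb^{1/2})$ uniformly.

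First I would observe that it suffices to treat $h_\eb=g_\eb$ itself: by Lemma \ref{Lem3.1}(2), any $h_\eb\in\Cal H(g_\eb)$ satisfies \eqref{3.20} with the {\it same} $\alpha_\phi$, and \eqref{4.20} is translation-invariant (if $g-\bar g=\partial_t G$ then $T_h g-\bar g=\partial_t(T_h G)$ with $\|T_hG\|_{L^2_b}=\|G\|_{L^2_b}$), so the bound is uniform over the hull. Now with $g_\eb(t)=g(\eb^{-1}t)$ and \eqref{4.20} we have $g_\eb(t)-\bar g=\partial_t\big(\eb\, G(\eb^{-1}t)\big)=:\partial_t G_\eb(t)$, and $\|G_\eb\|_{L^\infty([T,T+1],L^2(\omega))}\le \eb\,\|G\|_{L^2_b(\Omega)}$. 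Integrating by parts in $t$ over $[\tau+s,\tau+s+1]$,
\begin{multline*}
\Big|\int_{\tau+s}^{\tau+s+1}\!\!\int_\omega (g_\eb(t,x)-\bar g(x)).v_\eb(t,x)\,dx\,dt\Big|\le \eb\,\|G\|_{L^2_b}\Big(\|v_\eb(\tau+s)\|_{L^2(\omega)}\\+\|v_\eb(\tau+s+1)\|_{L^2(\omega)}+\int_{\tau+s}^{\tau+s+1}\|\Dt v_\eb(t)\|_{L^2(\omega)}\,dt\Big),
\end{multline*}
which by \eqref{4.5} (the $W^{1,2}$-bound on $u_\eb$ and $u_0$) is $\le C_R\,\eb$; summing over unit intervals gives a bound $C_R(t-\tau)\eb$ for the last term of \eqref{4.4}. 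Feeding this into \eqref{4.8} in place of $C(t-\tau)\tilde\alpha_R(\eb)$ and applying Gronwall reproduces \eqref{4.2} with $\alpha_R(\eb)=C_R\eb^{1/2}$ (the exponent $1/2$ because $\eb$ appears inside the squared norm $\|v(t)\|_{L^2}^2$). This settles the claim for $\alpha_R$ in Theorem \ref{Th4.1}; the claim for $\bar\alpha_R=\bar C_R\eb^{\kappa_\delta/2}$ in Corollary \ref{Cor4.1} then follows immediately from the interpolation inequality \eqref{4.12} and the $\eb$-uniform $V_0^{p+\delta}$-bound \eqref{4.11}, exactly as in the original proof of that corollary.

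For the Frechet-derivative statements (Theorem \ref{Th4.2} and Corollary \ref{Cor4.2}) the same mechanism applies: the estimate \eqref{4.16} for $\theta_\eb=w_\eb-w_0$ contains, besides the already-explicit $\eb^2$ terms, only $\int_\tau^T\|u_\eb-u_0\|_{L^2(\omega)}^2\|w_\eb\|_{L^\infty(\omega)}^2\,dt$, which by the now-quantitative \eqref{4.2} and the bound \eqref{4.15} is $\le C_R'(T-\tau)\eb\, e^{4\Lambda_0(T-\tau)}\|\xi\|_{V_0^p}^2$. Thus \eqref{4.17} holds with $\alpha_R(\eb)^2$ replaced by $C_R\eb$, and Gronwall yields \eqref{4.13} with $\alpha_R(\eb)=C_R\eb^{1/2}$; Corollary \ref{Cor4.2} then gives $\bar\alpha_R(\eb)=\bar C_R\eb^{\kappa_\delta/2}$ via \eqref{4.19} and \eqref{4.12}.

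The only genuinely delicate point is the uniformity of the new bound over $s\in\R_+$ and over $h_\eb\in\Cal H(g_\eb)$; this is handled by the translation-invariance of \eqref{4.20} noted above together with the $\eb$-, $\tau$-, $s$-uniform $W^{1,2}(\Omega_0)$-bounds on $T_s v_\eb$ and $T_s w_\eb$ already recorded in \eqref{4.5} and \eqref{4.15}. I expect no further obstacle: once the integration-by-parts replacement is in place, the rest is a line-by-line re-run of the proofs of Theorems \ref{Th4.1}–\ref{Th4.2} and Corollaries \ref{Cor4.1}–\ref{Cor4.2} with the explicit powers of $\eb$ carried through.
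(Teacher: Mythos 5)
Your proposal is correct and takes essentially the same route as the paper: pass to the hull via translation invariance of \eqref{4.20}, write $h_\eb(t)-\bar g=\eb\,\Dt H_\eb(t)$, integrate by parts against $v_\eb$ to extract an explicit factor $\eb$ controlled by the $W^{1,2}$-bounds \eqref{4.5} and \eqref{4.15}, and then rerun the Gronwall and interpolation steps of Theorems \ref{Th4.1}--\ref{Th4.2} and Corollaries \ref{Cor4.1}--\ref{Cor4.2}. The only imprecision is the claim $\|G_\eb\|_{L^\infty([T,T+1],L^2(\omega))}\le\eb\|G\|_{L^2_b(\Omega)}$ -- an $L^2_b$ bound alone does not control the pointwise-in-$t$ boundary terms produced by the integration by parts -- but this is repaired exactly as in the paper's estimate \eqref{4.25} by also using $\Dt G=g-\bar g\in L^2_b(\Omega)$, so that $\|H_\eb(T)\|_{L^2(\omega)}\le C(\|G\|_{L^2_b}+\|\Dt G\|_{L^2_b})$.
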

\begin{proof} We first note that, due to representation \eqref{3.2}
and the weak compactness of bounded subsets of $L^p_{loc}(\Omega)$,
it follows from \eqref{4.20} that, for every $h\in\Cal H(g)$,
there exists $H\in\Cal H(G)$ such that
\begin{equation}\label{4.22}
h(t)-\bar g=\Dt H(t).
\end{equation}
We also note that, according to  \eqref{4.4}-\eqref{4.5} and \eqref{4.17},
it is sufficient to verify that
\begin{equation}\label{4.23}
I_\eb(T):=\big|\int^T_\tau\int_\omega(h_\eb(t,x)-\bar
g(x)).v_\eb(t,x)\,dt\,dx\big|\le C_R''(T-\tau+1)\eb,
\end{equation}
where $h\in\Cal H(g)$, $h_\eb(t):=h(\eb^{-1}t)$ and the constant $C_R$
is independent of $\eb$. Let us verify this inequality. Indeed, it
follows from \eqref{4.22} that
\begin{equation}\label{4.24}
h_\eb(t)-\bar g=\eb\Dt H_\eb(t),\ \ H_\eb(t):=H(\eb^{-1}t),
\end{equation}
consequently, integrating by parts in \eqref{4.23}, we have
\begin{multline}\label{4.25}
I_{\eb}(T)\le \eb\big|\int_\tau^T\int_\omega
H_\eb(t).\Dt v_\eb(t)\,dx\,dt\big|+\eb\big|\int_\omega
H_\eb(T).v_\eb(T)\,dx\big|\le\\\le C\eb(T-\tau+1)\(\|G\|_{L^2_b(\Omega)}+
\|\Dt G\|_{L^2_b(\Omega)}\)\|v_\eb\|_{W^{1,2}_b(\Omega)},
\end{multline}
where the constant $C$ is independent of $\eb$ (here we implicitly
used that $\|H_\eb\|_{L^2_b(\Omega)}\le\|H\|_{L^2_b(\Omega)}
\le \|G\|_{L^2_b(\Omega)}$). Estimate \eqref{4.23} is an immediate
corollary of \eqref{4.25} and \eqref{4.5}. Theorem \ref{Th4.3} is proved.
\end{proof}


\section{Nonautonomous unstable manifolds of the nonlinear
elliptic equation}\label{s5}
In this section, using the standard perturbation
technique,
we define, for a sufficiently small $\eb>0$, the nonautonomous unstable manifold
of nonautonomous elliptic system \eqref{1.1} which corresponds to
the hyperbolic equilibrium of the limit parabolic equation \eqref{4.1}. This result
will be used in the next section in order to construct the nonautonomous
regular attractor for system \eqref{1.1}.
For simplicity, we restrict ourselves to consider
only the case of rapidly oscillating external forces
$g_\eb(t):=g(\eb^{-1}t)$ where $g$ is an almost-periodic function
with respect to $t$ with values in $L^{p+\delta}(\omega)$:
\begin{equation}\label{5.1}
g\in AP(\R,L^{p+\delta}(\omega)),\ \ p>2p_{min},\ \ \delta>0,
\end{equation}
although all of the results formulated below remain true (after minor
changing) under assumptions of previous section.
Moreover, since the results of this section are more or less standard,
we give below only schematic proofs resting the details to the reader.
\par
Our main assumption is
that equation \eqref{4.1} possesses a hyperbolic equilibrium
$z_0\in V^p_0(\omega)$, i.e.
\begin{equation}\label{5.2}
a\Dx z_0-f(z_0)+\bar g=0\ \ \text{and}\ \ \sigma(L_{z_0})\cap
\{\Ree \lambda=0\}=\varnothing
\end{equation}
where $L_{z_0}:=\gamma^{-1}(a\Dx-f'(z_0))$ and
$\sigma(L)$ denotes the spectrum of the operator $L$. It is well known
that hyperbolicity assumption \eqref{5.2} implies existence of
the spectral decomposition
\begin{equation}\label{5.3}
V^p_0(\omega)=V_++V_-,\ \ V_+\cap V_-=\{0\},
\end{equation}
where the linear subspaces $V_\pm$ are invariant with respect to the operator
$L_{z_0}$ and satisfy the following properties:
$$
\sigma(L_{z_0}\big|_{V_+})\subset\{\Ree\lambda\ge\nu\},\ \  \
\sigma(L_{z_0}\big|_{V_-})\subset\{\Ree\lambda\le-\nu\},
$$
for a sufficiently small positive $\nu$. Moreover, the dimension of the
unstable subspace $V_+$ is finite and is called the instability index of
the equilibrium $z_0$:
\begin{equation}\label{5.4}
\ind_{z_0}:=\dim V_+<\infty,
\end{equation}
see e.g. \cite{4} and \cite{25}. It is also well known (see
\cite{4}, \cite{24}) that spectral decomposition \eqref{5.3}
generates two invariant manifolds $\Cal M^+_{z_0}$ and $\Cal
M^-_{z_0}$ for {\it nonlinear} problem \eqref{4.1} in a
sufficiently small neighborhood of $z_0$ which correspond to linear
subspaces $V_+$ and $V_-$ respectively. Since only unstable
manifolds are important for the attractors theory, we formulate
below the rigorous result on the unstable manifold $\Cal M^+_{z_0}$
only.
\begin{theorem}\label{Th5.1} Let the assumptions of Theorem \ref{Th4.1}
hold and let, in addition, \eqref{5.2} be satisfied. Then, there
exists a small neighborhood $\Cal W_{z_0}$ of the equilibrium $z_0$
(in the $V^p_0$-metric) such that the set
\begin{multline}\label{5.5}
\Cal M^{+,loc}_{z_0}:=\big\{u_0\in V^p_0(\omega)\,,\ \exists u\in C_b(\R,V^p_0(\omega))
\text{ which solves \eqref{4.1}, $u(0)=u_0$, }\\
\lim\nolimits_{t\to-\infty}u(t)=z_0
\ \text{ and }\ u(t)\in \Cal W_{z_0}\ \ \forall t\le0\big\}
\end{multline}
is a finite dimensional $C^1$-submanifold of $V^p_0(\omega)$ which is
diffeomorphed to $V_+$.
\end{theorem}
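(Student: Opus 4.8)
The plan is to follow the classical Lyapunov--Perron construction of unstable manifolds for semilinear parabolic equations (see \cite{4} and \cite{24}), applied to the limit problem \eqref{4.1} at the hyperbolic equilibrium $z_0$. First I would pass to the deviation $v:=u-z_0$, for which \eqref{4.1} reads
\[
\gamma\Dt v=a\Dx v-f'(z_0)v-N(v),\quad v\DOM=0,\quad N(v):=f(z_0+v)-f(z_0)-f'(z_0)v,
\]
i.e. $\Dt v=L_{z_0}v-\gamma^{-1}N(v)$. Since $f\in C^2$ and $V^p_0(\omega)\subset C(\omega)$, the substitution operator $v\mapsto N(v)$ is $C^1$ from a small ball of $V^p_0(\omega)$ into $L^p(\omega)$ and satisfies $\|N(v)\|_{L^p(\omega)}\le C\|v\|_{V^p_0(\omega)}^2$ together with the matching Lipschitz bounds for $N$ and for $DN$; note also $DN(0)=0$. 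The hyperbolicity assumption \eqref{5.2} yields the spectral projectors $P_\pm$ onto $V_\pm$ and the exponential dichotomy estimates $\|e^{L_{z_0}t}P_+\|_{\Cal L(V^p_0(\omega),V^p_0(\omega))}\le Ce^{\nu t}$ for $t\le0$ and $\|e^{L_{z_0}t}P_-\|_{\Cal L(L^p(\omega),V^p_0(\omega))}\le Ct^{-\theta}e^{-\nu t}$ for $t>0$ with $\theta:=1-1/p<1$ (the finite-dimensionality of $V_+$ makes $L_{z_0}|_{V_+}$ bounded, so there is no smoothing loss on the unstable subspace).

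Next, I would fix $\mu\in(0,\nu)$ and work in the weighted space
\[
X_\mu:=\big\{v\in C((-\infty,0],V^p_0(\omega)):\ \|v\|_\mu:=\sup\nolimits_{t\le0}e^{-\mu t}\|v(t)\|_{V^p_0(\omega)}<\infty\big\}.
\]
For $v_+\in V_+$, a complete backward orbit $v\in X_\mu$ of the deviation equation satisfies, by Duhamel's formula and letting the base time tend to $-\infty$, the Lyapunov--Perron equation $v=\Cal T_{v_+}v$ with
\[
(\Cal T_{v_+}v)(t):=e^{L_{z_0}t}v_+ +\int_t^0 e^{L_{z_0}(t-s)}P_+\gamma^{-1}N(v(s))\,ds-\int_{-\infty}^t e^{L_{z_0}(t-s)}P_-\gamma^{-1}N(v(s))\,ds.
\]
Using the dichotomy bounds together with $\|N(v(s))\|_{L^p(\omega)}\le Ce^{2\mu s}\|v\|_\mu^2$ one checks, for $\rho>0$ small and $\|v_+\|_{V^p_0(\omega)}$ small, that $\Cal T_{v_+}$ maps the ball $\bar B_\rho\subset X_\mu$ into itself and is a contraction there (the $(t-s)^{-\theta}$ singularity in the $P_-$-term is integrable since $\theta<1$ and is dominated by the weight). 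Let $v^\ast(\cdot;v_+)\in X_\mu$ denote the unique fixed point; it is a mild, hence classical, solution of \eqref{4.1} decaying to $z_0$ as $t\to-\infty$. Setting $\Phi(v_+):=P_-v^\ast(0;v_+)\in V_-$ for $\|v_+\|_{V^p_0(\omega)}<r$ and choosing $\Cal W_{z_0}$ to be a suitably small $V^p_0(\omega)$-ball around $z_0$, the candidate manifold is the graph
\[
\Cal M^{+,loc}_{z_0}=\big\{z_0+v_+ +\Phi(v_+):\ v_+\in V_+,\ \|v_+\|_{V^p_0(\omega)}<r\big\}.
\]

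To identify this graph with the set in \eqref{5.5}, I would argue both inclusions. Every point $z_0+v_+ +\Phi(v_+)$ carries, by construction, the backward orbit $z_0+v^\ast(t;v_+)$, which lies in $\Cal W_{z_0}$ for $t\le0$ and tends to $z_0$; conversely, if $u(t)$, $t\le0$, is any backward orbit of \eqref{4.1} with $u(t)\to z_0$ and $u(t)\in\Cal W_{z_0}$, then $v:=u-z_0$ satisfies $P_-v(t)=-\int_{-\infty}^t e^{L_{z_0}(t-s)}P_-\gamma^{-1}N(v(s))\,ds$, and a routine Gronwall-type a~priori estimate (valid because on $\Cal W_{z_0}$ the nonlinear term is quadratically small) upgrades the mere convergence $\|v(t)\|_{V^p_0(\omega)}\to0$ to an exponential rate $\|v(t)\|_{V^p_0(\omega)}\le Ce^{\mu t}$; hence $v\in X_\mu$, and uniqueness of the fixed point forces $v=v^\ast(\cdot;P_+v(0))$, so $u(0)$ lies on the graph. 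Finally, $C^1$-regularity of $\Phi$ follows from the uniform contraction principle with $C^1$ dependence on the parameter $v_+$ (equivalently, the implicit function theorem applied to $(v_+,v)\mapsto v-\Cal T_{v_+}v$, whose $v$-differential $I-D_v\Cal T_{v_+}$ is invertible), using that $\Cal T$ is $C^1$ precisely because $N$ is; since $DN(0)=0$ one gets $D\Phi(0)=0$, so the graph map $v_+\mapsto z_0+v_+ +\Phi(v_+)$ is an injective $C^1$-immersion of the $r$-ball of $V_+$ which is a homeomorphism onto its image, i.e. a $C^1$-embedding. Therefore $\Cal M^{+,loc}_{z_0}$ is a $C^1$-submanifold of $V^p_0(\omega)$ diffeomorphic to $V_+$, of dimension $\ind_{z_0}=\dim V_+<\infty$.

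I expect the main obstacle to be the \emph{two-space} bookkeeping: the nonlinearity $N$ has to be measured in $L^p(\omega)$ while the solution lives in $V^p_0(\omega)\simeq W^{2(1-1/p),p}(\omega)$, so one must balance the parabolic smoothing factor $(t-s)^{-\theta}$, the exponential dichotomy, and the quadratic time-weight $e^{2\mu s}$ inside $X_\mu$ simultaneously; together with the verification that the substitution operator $v(\cdot)\mapsto N(v(\cdot))$ is genuinely $C^1$ on these weighted spaces (which is exactly where $f\in C^2$ is used) and the a~priori backward-exponential-decay lemma needed for the converse inclusion. All of these are standard, which is why a schematic proof suffices here.
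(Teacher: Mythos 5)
Your Lyapunov--Perron construction is correct, and it is worth noting that the paper does not actually prove Theorem \ref{Th5.1} at all: it is stated as a classical consequence of the hyperbolicity assumption \eqref{5.2} with a pointer to \cite{4} and \cite{24}, so there is no in-text argument to compare against line by line. Your sketch is essentially the standard proof contained in those references, and the bookkeeping you flag as the main obstacle is handled correctly: the weight $e^{-\mu t}$ with $\mu\in(0,\nu)$, the integrable smoothing singularity $(t-s)^{-\theta}$ with $\theta=1-1/p<1$ on the $P_-$ component, and the absence of smoothing loss on the finite-dimensional $V_+$ all fit together as you describe, and the converse inclusion (upgrading mere convergence $u(t)\to z_0$ inside $\Cal W_{z_0}$ to exponential decay, so that the orbit is captured by the unique fixed point) is the one genuinely non-cosmetic step, which you correctly identify and resolve by the standard a priori estimate. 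The only methodological contrast worth recording is with the paper's treatment of the \emph{perturbed} analogue, Theorem \ref{Th5.2}: there the authors discretize time, work in the sequence space $\Theta_-=l^\infty(\Bbb Z_-,V^p_0(\omega))$, and apply the implicit function theorem to the map $(w_0,w)\mapsto w-\Bbb T_{\cdot,h_\eb,\eb}w$ with the constraint $\Pi_+w(0)=w_0$. That discrete scheme is the exact analogue of your continuous-time fixed-point equation $v=\Cal T_{v_+}v$; its advantage is that it transfers verbatim to the nonautonomous, $\eb$-dependent processes $U^\eb_{h_\eb}(l,m)$ (for which no semigroup/Duhamel formula is available), while your continuous formulation is more transparent for the limit autonomous parabolic problem \eqref{4.1} and makes the role of the analytic-semigroup smoothing explicit. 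Two trivial loose ends in your write-up: the set \eqref{5.5} asks for $u\in C_b(\R,V^p_0(\omega))$, so the backward orbit produced by the fixed point must also be continued forward (immediate from the global solvability and dissipativity of \eqref{4.1}); and the fixed point should be checked to lie in the solution class of Definition \ref{Def1.sol}, which follows by the maximal regularity of Appendix \ref{a1} with $\eb=0$. Neither affects the validity of the argument.
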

The main task of this Section is to construct the analogue of unstable
manifold \eqref{5.5} for nonlinear {\it elliptic} equation \eqref{3.16}
if $\eb>0$ is small enough. Since equation \eqref{3.16} is 'nonautonomous'
then we first need to find the analogue of the hyperbolic equilibrium $z_0$.
\begin{proposition}\label{Prop5.1} Let the assumptions of Theorem \ref{Th4.1} hold
and \eqref{5.1}-\eqref{5.2} be satisfied. Then there exist $\tilde\eb_0>0$
and $\delta_0>0$
such that, for every $\eb\le\tilde\eb_0$, $h\in\Cal H(g)$
there exists a unique solution
$u=u_{h_\eb,z_0}^\eb(t)$ of the problem
\begin{equation}\label{5.6}
a(\eb^2\Dt^2 u+\Dx u)-\gamma\Dt u-f(u)=h_\eb(t),\ t\in\R,\
\ h_\eb(t):=h(\eb^{-1}t),
\end{equation}
which satisfies the condition
$\|u_{h_\eb,z_0}^\eb-z_0\|_{C_b(\R,V^p_0(\omega))}\le\delta_0$.
Moreover, this solution is almost-periodic with respect to $t$:
\begin{equation}\label{5.7}
u_{h_\eb,z_0}^\eb\in AP(\R,V^p_\eb(\omega)) \text{ and }\ \
\|u_{h_\eb,z_0}^\eb-z_0\|_{C_b(\R,V^p_0(\omega))}\le C\bar\alpha_{R_0}(\eb),
\end{equation}
where the constants $R_0$ and $C$ are independent of $\eb$ and $h$
and the function $\bar \alpha_{R_0}(\eb)$ is the same as in
Corollaries \ref{Cor4.1} and \ref{Cor4.2}. In particular, $u_{h_\eb,z_0}^\eb\to z_0$ as
$\eb\to0$.
\end{proposition}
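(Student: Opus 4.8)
The plan is to write $u=z_0+v$ and to determine $v$ by a contraction argument built on the linearization of \eqref{5.6} at $z_0$. Since $z_0$ is independent of $t$ we have $\eb^2a\Dt^2z_0=0$ and, by the first relation in \eqref{5.2}, $a\Dx z_0-f(z_0)+\bar g=0$; hence $v$ must solve
\[
\mathscr{L}_\eb v:=a(\eb^2\Dt^2v+\Dx v)-\gamma\Dt v-f'(z_0)v=(h_\eb(t)-\bar g)+N(v),\qquad v\DOM=0,
\]
where $N(v):=f(z_0+v)-f(z_0)-f'(z_0)v$ is the quadratic remainder. We regard $\mathscr{L}_\eb$ as a $t$-autonomous operator from $W^{(1,2),p}_{\eb,b}(\Omega)$ (with the Dirichlet condition on $\partial\omega$) into $L^p_b(\Omega)$; since $f\in C^2$ and $W^{(1,2),p}_\eb\subset C_b$, the map $N$ is locally Lipschitz with $\|N(v_1)-N(v_2)\|_{L^p_b}\le C(\|v_1\|+\|v_2\|)\|v_1-v_2\|$ on bounded sets, the constant being independent of $\eb$.

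The central step is the $\eb$-uniform invertibility of $\mathscr{L}_\eb$: for every $\eb\in[0,\tilde\eb_0]$, the operator $\mathscr{L}_\eb\colon W^{(1,2),p}_{\eb,b}(\Omega)\to L^p_b(\Omega)$ is an isomorphism with $\|\mathscr{L}_\eb^{-1}\|\le C_0$, $C_0$ independent of $\eb$. For $\eb=0$ this is exactly the solvability on the whole line of the parabolic operator $\gamma\Dt-a\Dx+f'(z_0)$, i.e. the exponential dichotomy provided by the hyperbolicity assumption \eqref{5.2} (the spectrum of $L_{z_0}$ avoids the imaginary axis, cf. the splitting \eqref{5.3}). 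For $\eb>0$ the term $\eb^2a\Dt^2$ is a singular perturbation of highest order, so one cannot argue by a Neumann series; instead I would combine the $\eb$-uniform maximal regularity estimates of the Appendix (Corollary \ref{CorA.1}) with the spectral splitting \eqref{5.3}, projecting the equation onto the finite-dimensional unstable part $V_+$ and onto $V_-$ and using that on each part the reduced constant-coefficient problem stays $\eb$-uniformly hyperbolic, hence is solvable on $\R$ with $\eb$-independent bounds; this is the same mechanism as in \cite{27,28}. I expect this uniform invertibility to be the main technical obstacle, since it is the only place where the singular-perturbation structure is genuinely confronted.

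Granting this, set $w^0_\eb:=\mathscr{L}_\eb^{-1}(h_\eb-\bar g)$. Because $g$ is almost-periodic with mean $\bar g$, the fast oscillation $h_\eb-\bar g=h(\eb^{-1}\cdot)-\bar g$ has zero mean, so the bounded full-line solution $w^0_\eb$ tends to $0$; quantitatively, restricting to the windows $\Omega_T$ and gluing the local estimates through the exponential dichotomy, one controls $\|w^0_\eb\|_{W^{(1,2),p}_{\eb,b}(\Omega)}$ by the weak-convergence modulus of Section \ref{s4}, giving $\|w^0_\eb\|\le C\bar\alpha_{R_0}(\eb)$ with $\bar\alpha_{R_0}$ as in Corollaries \ref{Cor4.1}--\ref{Cor4.2} (a compactness–contradiction argument through the limit parabolic equation, whose only bounded full-line solution of the linearized equation is $0$ by hyperbolicity, already yields the qualitative $\|w^0_\eb\|\to0$). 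Writing $v=w^0_\eb+\tilde v$, the remainder solves $\mathscr{L}_\eb\tilde v=N(w^0_\eb+\tilde v)$; for $\eb$ small this is a contraction of a small ball $\{\|\tilde v\|_{W^{(1,2),p}_{\eb,b}(\Omega)}\le\rho\}$ into itself (fix $\rho$ with $C_0C\rho\le1/2$, then take $\eb$ small so that $C\bar\alpha_{R_0}(\eb)$ is small), producing a unique $\tilde v^\eb$ with $\|\tilde v^\eb\|\le C\|w^0_\eb\|^2$. Hence $v^\eb:=w^0_\eb+\tilde v^\eb$ gives the unique solution $u^\eb_{h_\eb,z_0}:=z_0+v^\eb$, and the bound $\|u^\eb_{h_\eb,z_0}-z_0\|_{C_b(\R,V^p_0(\omega))}\le C\bar\alpha_{R_0}(\eb)$ follows from $\|v^\eb\|_{W^{(1,2),p}_{\eb,b}}\le C\bar\alpha_{R_0}(\eb)$ together with the $\eb$-uniform embeddings $W^{(1,2),p}_{\eb,b}(\Omega)\subset C_b(\R,V^p_\eb(\omega))\subset C_b(\R,V^p_0(\omega))$ (using \eqref{2.28}). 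Uniqueness among all solutions with $\|u-z_0\|_{C_b(\R,V^p_0(\omega))}\le\delta_0$ holds because any such $u$ has, by elliptic regularity applied to $\mathscr{L}_\eb v=(h_\eb-\bar g)+N(v)$ via $v=w^0_\eb+\mathscr{L}_\eb^{-1}N(v)$, its $W^{(1,2),p}_{\eb,b}$-norm bounded by $C\bar\alpha_{R_0}(\eb)+C\delta_0^2\le\rho$ once $\delta_0$ and $\eb$ are small, so it falls into the ball where the contraction is unique.

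Finally, almost-periodicity: $g\in AP(\R,L^{p+\delta}(\omega))$ implies that $h_\eb=h(\eb^{-1}\cdot)$ is almost-periodic for every $h\in\Cal H(g)$, so its hull is compact in $C_b(\R,L^{p+\delta}(\omega))$. Since $\mathscr{L}_\eb$ commutes with time translations and the small solution constructed above is unique, the map $\tilde h\mapsto u^\eb_{\tilde h,z_0}$ is translation-equivariant and continuous on $\Cal H(h_\eb)$; by the Bochner-Amerio criterion this gives $u^\eb_{h_\eb,z_0}\in AP(\R,V^p_\eb(\omega))$, and in particular $u^\eb_{h_\eb,z_0}\to z_0$ as $\eb\to0$. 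The remaining verifications are routine and, as the authors indicate, may be left to the reader.
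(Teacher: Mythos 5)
Your overall architecture (perturb off the hyperbolic equilibrium, then contract away the quadratic remainder) is a genuinely different route from the paper's, and as written it has two concrete gaps. The paper never linearizes the PDE on the whole cylinder: it passes to the discrete time-one maps $U^\eb_{h_\eb}(n+1,n)$ acting on the $\eb$-independent space $V^p_0(\omega)$ and applies the implicit function theorem to $u(n+1)=U^\eb_{h_\eb}(n+1,n)u(n)$ near the constant sequence $z_0$ in $l^\infty(\Bbb Z,V^p_0(\omega))$. The only invertibility needed there is that of $w\mapsto w-D_uS_1(z_0)w$ on $l^\infty$, which is the standard discrete dichotomy for the \emph{limit parabolic} linearization and follows directly from \eqref{5.2}; the singular perturbation enters only through the $C^1$-closeness of $U^\eb_{h_\eb}(n+1,n)$ to $S_1$ already established in Corollaries \ref{Cor2.4}, \ref{Cor4.1} and \ref{Cor4.2}. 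Your route instead requires the $\eb$-uniform invertibility of $\mathscr{L}_\eb$ from $W^{(1,2),p}_{\eb,b}(\Omega)$ onto $L^p_b(\Omega)$ on the whole line, which you correctly identify as the crux but do not prove. The mechanism you sketch --- projecting onto $V_\pm$ --- does not go through as stated: $V_\pm$ are invariant for $L_{z_0}=\gamma^{-1}(a\Dx-f'(z_0))$, but the operators $a\eb^2\Dt^2$ and $\gamma\Dt$ occurring in $\mathscr{L}_\eb$ need not preserve $V_\pm$ unless $a$, $\gamma$ and $f'(z_0)$ commute, so the equation does not decouple along \eqref{5.3}. A correct proof of this uniform invertibility is essentially of the same order of difficulty as the proposition itself; in the paper's framework it would be extracted \emph{from} the discrete dichotomy rather than the other way around.

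Second, the quantitative step is wrong as stated: the bound $\|w^0_\eb\|_{W^{(1,2),p}_{\eb,b}(\Omega)}\le C\bar\alpha_{R_0}(\eb)$ cannot hold. Since $h_\eb-\bar g$ tends to zero only \emph{weakly} and stays of order one in $L^p_b$, the equation forces $\gamma\Dt w^0_\eb$ to absorb the oscillatory forcing at size $O(1)$; the scalar model $\dot w+w=\sin(t/\eb)$ already gives $\|w\|_{C_b}=O(\eb)$ while $\|\dot w\|_{C_b}=O(1)$. Only the weaker norms of $w^0_\eb$ (in $C_b(\R,V^p_0(\omega))$, or $C_b(\R,L^2(\omega))$ as in Theorem \ref{Th4.1}) are $O(\bar\alpha_{R_0}(\eb))$ --- which is precisely why the paper states its closeness results (Corollaries \ref{Cor4.1}, \ref{Cor4.2} and estimate \eqref{5.10}) for the time-one maps in $V^p_0(\omega)$ rather than for solutions in the maximal-regularity norm. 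Your contraction can be repaired by measuring $w^0_\eb$ only in $C_b(\R,V^p_0(\omega))$ (which is all the quadratic map $N$ requires) while keeping the unknown $\tilde v$ in $W^{(1,2),p}_{\eb,b}$, but since the bound you wrote is the one that feeds \eqref{5.7}, the norm must be tracked correctly. The final step (almost-periodicity from uniqueness and translation equivariance via the Bochner--Amerio criterion) is fine and coincides with the paper's argument.
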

{\it Sketch of proof.} Instead of solving \eqref{5.6}, we first solve the
following discrete analogue of this equation:
\begin{equation}\label{5.8}
u(n+1)=U^\eb_{h_\eb}(n+1,n)u(n)
\end{equation}
in the space of sequences $u\in l^\infty(\Bbb Z,
V^p_0(\omega))$. We are going to solve \eqref{5.8} near the constant
sequence $u(n)\equiv z_0$ using the implicit function
theorem. Indeed, due Corollaries \ref{Cor2.4}, \ref{Cor4.1} and \ref{Cor4.2}, the operators
$U^\eb_{h_\eb}(n+1,n)$ are close to $S_1$ together with their Frechet
derivatives as $\eb\to0$ (uniformly with respect to $n$ and
$h$). Moreover, the linearized problem (which corresponds to
 \eqref{5.8} at $\eb=0$ and $u=z_0$)
\begin{equation}\label{5.9}
w(n)-D_uS_1(z_i)w(n)=\tilde h(n)
\end{equation}
is uniquely solvable in $l^\infty(\Bbb Z,V^p_0(\omega))$ for every
$\tilde h\in l^\infty(\Bbb Z,V^p_0(\omega))$ (due to hyperbolicity
of the equilibrium $z_i$. Thus, the implicit function theorem is
indeed applicable to equation \eqref{5.8} and gives the existence
and uniqueness of the solution $\bar u_{h_\eb,z_0}^\eb\in
l^\infty(\Bbb Z, V^p_0(\omega))$ of \eqref{5.8}, for sufficiently
small $\eb>0$, which belongs to a small neighborhood of the
equilibrium $z_0$. Moreover, it follows in a standard way from
\eqref{2.30}, \eqref{4.10} and \eqref{4.18} that this solution
satisfies
\begin{equation}\label{5.10}
\|\bar u_{h_\eb,z_0}^\eb(n)-z_0\|_{V^p_0(\omega)}\le C\bar\alpha_{R_0}(\eb),\
\forall n\in\Bbb Z,
\end{equation}
where $C$ is independent of $h$, $i$ and $\eb$ and $R_0$ is the radius
of the uniform (with respect to $\eb$ and $h$) absorbing ball in
 $V^p_0(\omega)$ for the discrete processes $U^\eb_{h_\eb}(l,m)$,
$l,m\in\Bbb Z$, $l\ge m$, and $h\in\Cal H(g)$ (which exists due to
estimate \eqref{2.29}). The desired continuous function
$u_{h_\eb,z_0}^\eb(t)$, $t\in\R$, can be now defined as follows:
\begin{equation}\label{5.11}
u_{h_\eb,z_0}^\eb(t):=u_{T_th_\eb,z_0}^\eb(0).
\end{equation}
Obviously, \eqref{5.11} is a solution of \eqref{5.6} which
belongs to the space $C_b(\R,V^{p+\delta}_\eb(\omega))$ (due to
Corollary \ref{Cor1.2}) and satisfies
\begin{equation}\label{5.12}
\|u^\eb_{h_\eb,z_0}\|_{C_b(\R,V^{p+\delta}_\eb(\omega))}\le C,
\end{equation}
where the constant $C$ is independent of $\eb$, $i$ and $h$.
The uniqueness of this solutions (in a small neighborhood of $z_0$) is
an immediate corollary of the uniqueness of the discrete solution
$\bar u^\eb_{h_\eb,z_0}(m)$.
 The
almost-periodicity of this function is a standard corollary of that
uniqueness, see  e.g. \cite{18}). Proposition \ref{Prop5.1} is proved.

\qed

Now we are ready to define the analogues of the unstable sets $\Cal
M^+_{z_0}$ for problem \eqref{5.6}. Since this problem is
'nonautonomous' then these manifolds also depend on~$t$.
\begin{definition}\label{Def5.1} Let the assumptions of Proposition \ref{Prop5.1}
hold. For every $\eb\le\hat\eb_0$, $h\in\Cal H(g)$
 and $t\in\R$, we
define the set $\Cal M^{+,loc}_{\eb,h_\eb,z_0}(\tau)$ as follows:
\begin{multline}\label{5.13}
\Cal M^{+,loc}_{\eb,h_\eb,z_0}(\tau):=\big\{u_\tau\in V^p_\eb(\omega),\ \
\exists u\in C_b(\R,V^p_\eb(\omega)), \text{ which solves \eqref{5.6}},\\
\text{ $u(\tau)=u_\tau$, }\
\lim_{t\to-\infty}\|u(t)-u^\eb_{h_\eb,z_0}(t)\|_{V^p_\eb(\omega)}=0,\ \
u(t)\in\Cal W_{z_0}\ \ \forall t\le\tau\big\},
\end{multline}
where $u^\eb_{h_\eb,z_0}(t)$ is the solution of \eqref{5.6} constructed in
Proposition \ref{Prop5.1} and $\Cal W_{z_0}$ is the same as in Theorem \ref{Th5.1}.
Thus, set \eqref{5.13} consists of the values $u(\tau)$
at moment $\tau$ of all solutions $u\in C_b(\R,V^p_\eb(\omega))$
 of \eqref{5.6} which tend to
$u^\eb_{h_\eb,z_0}(t)$ as $t\to-\infty$ and belong to $\Cal W_{z_0}$
 for $t\le\tau$.
\end{definition}
The following theorem is the 'nonautonomous' analogue of Theorem \ref{Th5.1}.
\begin{theorem}\label{Th5.2} Let the assumptions of Proposition \ref{Prop5.1} hold.
Then, for sufficiently small $\eb\le\eb_0\ll1$ and
every $\tau\in\R$, the sets \eqref{5.18}
are $C^1$-submanifolds of $V^p_\eb(\omega)$ which are diffeomorphed to $V_+$.
\end{theorem}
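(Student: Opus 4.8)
\textit{Sketch of proof.} The plan is to reduce the statement to a standard Hadamard--Perron construction for the discrete cascade $\{U^\eb_{h_\eb}(l,m)\}$ acting on the $\eb$-independent space $V^p_0(\omega)$, linearized along the almost-periodic solution $\bar u^\eb_{h_\eb,z_0}$ produced in Proposition \ref{Prop5.1}. Writing $v(n):=u(n)-\bar u^\eb_{h_\eb,z_0}(n)$, the discrete equation \eqref{5.8} becomes $v(n+1)=L^\eb_n v(n)+N^\eb_n(v(n))$, where $L^\eb_n:=D_uU^\eb_{h_\eb}(n+1,n)(\bar u^\eb_{h_\eb,z_0}(n))$ and $N^\eb_n$ collects the nonlinear remainder. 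By Corollaries \ref{Cor2.4} and \ref{Cor4.2}, the operators $L^\eb_n$ converge to $D_uS_1(z_0)$ in $\Cal L(V^p_0(\omega),V^p_0(\omega))$ as $\eb\to0$, uniformly in $n\in\Bbb Z$ and $h\in\Cal H(g)$; since $f\in C^2$ and \eqref{5.10}--\eqref{5.12} hold, the maps $N^\eb_n$ are $C^1$ on the small neighborhood $\Cal W_{z_0}$, with $N^\eb_n(0)=0$, $DN^\eb_n(0)=0$ and a Lipschitz constant for $DN^\eb_n$ that can be made as small as we wish by shrinking $\Cal W_{z_0}$ (uniformly in $n$, $h$ and $\eb\le\eb_0$).

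The first substantial step is to pass from the hyperbolicity assumption \eqref{5.2} to a uniform exponential dichotomy for the nonautonomous linear cocycle generated by the $L^\eb_n$. The time-one map $D_uS_1(z_0)$ of the parabolic linearization $\Dt w=L_{z_0}w$ is hyperbolic --- its spectrum is $e^{\sigma(L_{z_0})}$, which misses the unit circle by \eqref{5.2} --- with the splitting $V^p_0(\omega)=V_+\oplus V_-$ of \eqref{5.3}. Robustness of exponential dichotomies under $\ell^\infty$-small nonautonomous perturbations then yields, for $\eb\le\eb_0$, an invariant decomposition $V^p_0(\omega)=E^+_\eb(n)\oplus E^-_\eb(n)$ with dichotomy exponents and constants independent of $n$, $h$ and $\eb$, and with fibres $E^\pm_\eb(n)$ uniformly close to $V_\pm$; in particular $\dim E^+_\eb(n)=\dim V_+=\ind_{z_0}$, since the dichotomy projections depend continuously on $\eb$ and nearby projections have equal rank.

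Next I would run the Lyapunov--Perron argument: the local unstable set of the cascade at time $0$ consists of those $v_0$ admitting a backward orbit $\{v(n)\}_{n\le0}$ with $v(n)\to0$ as $n\to-\infty$ and $v(n)\in\Cal W_{z_0}$. Parametrising such orbits by their $E^+_\eb(0)$-component and inverting the variation-of-constants formula against the dichotomy produces a contraction on a space of exponentially weighted sequences; its fixed point gives a Lipschitz map $\Psi^\eb_{h_\eb}\colon B_r\subset E^+_\eb(0)\to E^-_\eb(0)$ whose graph is precisely the discrete local unstable manifold. Smallness of the Lipschitz constant of $DN^\eb_n$ makes the contraction uniform, and differentiating the fixed point equation and invoking the fibre contraction theorem (using once more the $C^1$-closeness of Corollary \ref{Cor4.2} and $f\in C^2$) upgrades $\Psi^\eb_{h_\eb}$ to a $C^1$ map. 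Thus the discrete manifold is a finite-dimensional $C^1$-graph over $E^+_\eb(0)$, diffeomorphic to $V_+$. Finally I transfer this to the continuous equation \eqref{5.6} and to an arbitrary base time $\tau$: using the shift identity $u^\eb_{h_\eb,z_0}(t)=u^\eb_{T_th_\eb,z_0}(0)$, the cocycle property \eqref{2.15}, and the fact (from the exponential bounds of Lemma \ref{Lem2.1}) that a continuous solution converging to $u^\eb_{h_\eb,z_0}$ backward in time also converges along integer times, one identifies $\Cal M^{+,loc}_{\eb,h_\eb,z_0}(\tau)$ with the image under the $C^1$-diffeomorphism $U^\eb_{h_\eb}(\tau,\lfloor\tau\rfloor)$ (Theorems \ref{Th2.2}, \ref{Th2.3}, Corollary \ref{Cor2.2}) of the discrete local unstable manifold built for the external force $T_{\lfloor\tau\rfloor}h_\eb$; hence it is a $C^1$-submanifold of $V^p_\eb(\omega)$ of dimension $\ind_{z_0}$, diffeomorphic to $V_+$.

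The main obstacle is the second step: proving the persistence of the exponential dichotomy \emph{uniformly} as $\eb\to0$. Since the genuine phase space $V^p_\eb(\omega)$ varies with $\eb$, everything must be carried out on the fixed space $V^p_0(\omega)$ via the uniform embeddings \eqref{2.28} and the discrete cascade, and the closeness of $L^\eb_n$ to $D_uS_1(z_0)$ is only in the weaker $\Cal L(V^p_0,V^p_0)$-topology; verifying that this suffices to keep the dichotomy constants (and then, via the fibre contraction, the $C^1$ estimates) independent of $\eb$ is the real content, the remainder being the routine Hadamard--Perron machinery which, as in the rest of this section, I would leave to the reader.

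\qed
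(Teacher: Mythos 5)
Your construction is correct, but it implements the Hadamard--Perron step differently from the paper. The paper likewise reduces to the discrete cascade on the $\eb$-independent space $V^p_0(\omega)$, uses the shift identity \eqref{5.14} to put $\tau=0$, and recenters at the almost-periodic solution of Proposition \ref{Prop5.1}; but it then formulates the local unstable set directly as the solution set of the boundary value problem \eqref{5.16} in $\Theta_-=l^\infty(\Bbb Z_-,V^p_0(\omega))$ with the single boundary condition $\Pi_+w(0)=w_0\in V_+$, and applies the implicit function theorem near $(\eb,w)=(0,0)$ with $w_0$ as parameter. The invertibility of the linearization at $\eb=0$ is exactly the unique solvability of \eqref{5.17} in $\Theta_-$ for each $v_0\in V_+$, which is a direct restatement of the hyperbolicity of $z_0$, and the $C^1$-dependence of the fixed point on $w_0$ (hence the manifold structure and the diffeomorphism with $V_+$) comes for free from the theorem. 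This packages into a single step the two things you treat separately: the persistence of the exponential dichotomy along the perturbed orbit and the Lyapunov--Perron graph transform with its fibre-contraction upgrade to $C^1$. In particular, the difficulty you single out as the ``main obstacle'' dissolves in the paper's formulation: the convergence of $U^\eb_{h_\eb}(n+1,n)$ and its Frechet derivative to $S_1$ in the $\Cal L(V^p_0(\omega),V^p_0(\omega))$-topology, uniformly in $n$ and $h\in\Cal H(g)$ (Corollaries \ref{Cor2.4}, \ref{Cor4.1}, \ref{Cor4.2} combined with \eqref{5.10}), is precisely the hypothesis the implicit function theorem on $\Theta_-$ needs --- and it is also all that the classical roughness theorem for dichotomies requires, since the whole construction lives on the fixed space $V^p_0(\omega)$; no stronger topology enters. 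Your route buys an explicit uniform dichotomy and an explicit graph representation $\Psi^\eb_{h_\eb}\colon E^+_\eb(0)\to E^-_\eb(0)$ of the manifold, at the cost of two extra layers of machinery; the paper's route buys brevity. Both arguments end the same way: the smoothing property of $U^\eb_{h_\eb}(t,\tau)$ over a unit time interval together with the embeddings \eqref{2.28} upgrades the $V^p_0(\omega)$-submanifold to a $C^1$-submanifold of $V^p_\eb(\omega)$.
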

{\it Sketch of the proof.} We first note that
\begin{equation}\label{5.14}
\Cal M^{+,loc}_{\eb,h_\eb,z_0}(\tau)=\Cal M^{+,loc}_{\eb, T_\tau h_\eb,z_0}(0)
\end{equation}
and, consequently, it is sufficient to prove the theorem for $\tau=0$ only.
In order to do so, we seek for the solution $u(n)$, $n\le0$, of the
following discrete problem
\begin{equation}\label{5.15}
u(n+1)=U^\eb_{h_\eb}(n+1,n)u(n),\ \ n\in\Bbb Z_-, \ \ u(n)\in\Cal W_{z_0}
\end{equation}
in the space $\Theta_-:=l^\infty(\Bbb Z_-,V^p_0(\omega))$ of
$V^p_0$-valued sequences which remain bounded as $n\to-\infty$.
Then, as known (see e.g. \cite{12} and \cite{13,29})
 the union of all initial values $u(0)$ for such sequences gives the
desired unstable set $\Cal M^{+,loc}_{\eb,h_\eb,z_0}(0)$. Thus, there remains
to prove that the set of all solutions $u(n)$ of \eqref{5.15} belonging
to $\Theta_-$ is a submanifold of $\Theta_-$. To this end, we introduce
a new sequence  $w(n):=u(n)-u^\eb_{h_\eb,z_0}(n)$ where $u^\eb_{h_\eb,z_0}(t)$ is
defined in Proposition \ref{Prop5.1} and consider the following problem:
\begin{multline}\label{5.16}
w(n+1)=\Bbb T_{n,h_\eb,\eb}w(n),\ \ \Pi_+w(0)=w_0\in V_+,\\
\Bbb T_{n,h_\eb,\eb}w:=U^\eb_{h_\eb}(n+1,n)(u^\eb_{h_\eb,z_0}(n)+w)-
U^\eb_{h_\eb}(n+1,n)u^\eb_{h_\eb,z_0}(n),
\end{multline}
where $\Pi_+$ is a spectral projector to the spectral space $V_+$.
For every $w_0$, we find the solution $w\in\Theta_-$ using the implicit
function theorem. Indeed, due to Corollaries \ref{Cor2.4}, \ref{Cor4.1} and \ref{Cor4.2} and
Proposition \ref{Prop5.1}, the operators $\Bbb T_{n,h_\eb,\eb}(w)$ tend together
with their Frechet derivative to $S_1(w+z_0)-z_0$
(where $S_t$ is a solving semigroup of the limit parabolic equation
\eqref{4.1}) as $\eb\to0$ uniformly with respect to $h\in\Cal H(g)$
and $n\in\Bbb Z$. Moreover, the linearized (at $w=0$) limit equation
\begin{equation}\label{5.17}
v(n+1)=D_{u_0}S_1(z_0)v(n),\ \ \Pi_+v(0)=v_0,\ \ n\in\Bbb Z_-
\end{equation}
is uniquely solvable in $\Theta_-$, for every $v_0\in V_+$ (due to
the hyperbolicity of the equilibrium $z_0$). Applying the implicit
function theorem to equation \eqref{5.16}, we derive that, for
every $\eb\le\eb_0$ and every $w_0$ belonging to a sufficiently
small neighborhood of zero in $V_+$, there exists a unique solution
$w\in\Theta_-$ of \eqref{5.16} and that the set of all such
solutions is a $C^1$-submanifold in $\Theta_-$ diffeomorphed to
$V_+$. Thus, we have verified that $\Cal
M^{+,loc}_{\eb,h_\eb,z_0}(0)$ is a submanifold in $V^p_0(\omega)$.
Using now the smoothing property for the operators
$U^\eb_{h_\eb}(t,\tau)$ and embeddings \eqref{2.28}, it is easy to
show that $\Cal M^{+,loc}_{\eb,h_\eb,z_0}(0)$ is a submanifold not
only in $V^p_0(\omega)$, but also in $V^p_\eb(\omega)$. Theorem \ref{Th5.2}
is proved.

\qed

\begin{remark}\label{Rem5.1} It is not difficult to verify that the
manifolds $\Cal M^{+,loc}_{\eb,h_\eb,z_0}(\tau)$ are almost
periodic with respect to $\tau$  and tend to the unstable manifold
$\Cal M^{+,loc}_{z_0}$ of the limit parabolic problem \eqref{4.1}
as $\eb\to0$ uniformly with respect to $h\in\Cal H(g)$ and
$\tau\in\R$, \cite{12,13} and \cite{15} for the details.
\end{remark}
Finally, we define the {\it global} unstable manifolds
$\Cal M^+_{\eb,h_\eb,z_0}(\tau):=\Cal M^{+,gl}_{\eb,h_\eb,z_0}(\tau)$ as
a union of all values $u(\tau)$ for all solutions $u\in C_b(\R, V^p_\eb(\omega))$
of problem \eqref{5.6} which tend to $u^\eb_{h_\eb,z_0}(t)$ as $t\to-\infty$.
Then, these sets can be expressed in terms of the local
unstable manifolds via
\begin{equation}\label{5.18}
\Cal M^+_{\eb,h_\eb,z_0}(\tau)=
\cup_{n=1}^\infty U^\eb_{h_\eb}(\tau,\tau-n)
\Cal M^{+,loc}_{\eb,h_\eb,z_0}(\tau-n).
\end{equation}
Moreover, these sets are, obviously, strictly invariant with respect to
the dynamical process $U^\eb_{h_\eb}(t,\tau)$:
\begin{equation}\label{5.19}
U^\eb_{h_\eb}(t,\tau)\Cal M^+_{\eb,h_\eb,z_0}(\tau)=\Cal M^+_{\eb,h_\eb,z_0}(t)
\end{equation}
and satisfy the following translation property:
\begin{equation}\label{5.20}
\Cal M^+_{\eb,h_\eb,z_0}(\tau)=\Cal M^+_{\eb,T_\tau h_\eb,z_0}(0),
\end{equation}
see e.g. \cite{12} for the details.

\begin{remark}\label{Rem5.2} Arguing
in a standard way, it is not difficult to verify that expression
\eqref{5.18} together with the backward uniqueness proved in
Theorem \ref{Th2.3} allows indeed to endow the sets $\Cal
M^+_{\eb,h_\eb,z_0}(\tau)$ by the structure of a $C^1$-manifold
diffeomorphed to $V_+$, see \cite{4}. We however note that, in
contrast to local unstable manifolds, the global ones are not, in
general, {\it submanifolds} of $V^p_\eb(\omega)$ (even in the limit
autonomous case $\eb=0$). Indeed, if the limit equation
\eqref{4.1} possesses a homoclinic orbit to the equilibrium $z_0$,
then the corresponding global unstable manifold $\Cal M^+_{z_0}$
cannot be a submanifold of $V^p_0(\omega)$. Nevertheless, in
particular case where the limit parabolic equation possesses a
global Lyapunov function, the sets $\Cal M^+_{\eb,h_\eb,z_0}(\tau)$
are occurred to be {\it submanifolds} of the phase space $V^p_\eb$
for a sufficiently small $\eb$, see \cite{4} for the details.
\end{remark}


\section{The nonautonomous regular attractor}\label{s6}

In this section, using the theory of nonautonomous perturbations of
regular attractors (see \cite{12}), we obtain the detailed
description of the structure of attractors $\Cal A_\eb$, $\eb\ll1$,
of equations \eqref{3.16} in case where the limit parabolic
equation \eqref{4.1} is autonomous and possesses a global Lyapunov
function. In order to have the explicit expression for the Lyapunov
function, we assume that
\begin{equation}\label{6.1}
a=a^*\ \text{ and } \ f(u):=\nabla_u F(u),\
\text{ for some }\ F\in C^1(\R^k,\R).
\end{equation}
Then, system \eqref{4.1} possesses the following global Lyapunov
function:
\begin{equation}\label{6.2}
\Cal L(u_0):=\int_\omega a\Nx u_0.\Nx u_0+2F(u_0)+2\bar g.u_0\,dx.
\end{equation}
Let $\Cal R\subset V^p_0(\omega)$ be the set of equilibria of problem
\eqref{4.1}, i.e.
\begin{equation}\label{6.3}
\Cal R:=\{z\in V^p_0(\omega),\ \ a\Dx z-f(z)+\bar g=0\}.
\end{equation}
Our main assumption is that all of the equilibria of $\Cal R$ are
hyperbolic, i.e.
\begin{equation}\label{6.4}
\Cal R=\{z_i\}_{i=1}^N  \text{ and all of $z_i$ are hyperbolic,
see \eqref{5.2}}
\end{equation}
As known,
see \cite{4}, assumption \eqref{6.4} is satisfied for generic $\bar
g\in L^p(\omega)$ (belonging to some open and dense subset of
$L^p(\omega)$)). It is also well-known that, under above assumptions,
the global attractor $\Cal A_0$ of problem \eqref{4.1} possesses
the following description.
\begin{theorem}\label{Th6.1} Let the assumptions of Theorem \ref{Th4.1} hold
and let, in addition, \eqref{5.1}, \eqref{6.1} and \eqref{6.4}
 be satisfied. Then,
\par
1.\ Every solution $u(t)$, $t\in\R$, of \eqref{4.1}  belonging to
the attractor $\Cal A_0$ stabilizes as $t\to\pm\infty$ to different
equilibria $z_\pm\in\Cal R$:
\begin{equation}\label{6.5}
\lim_{t\to\pm\infty}\|u(t)-z_\pm\|_{V^p_0(\omega)}=0,
\end{equation}
where $z_+\ne z_-$.
\par
2. The attractor $\Cal A_0$ possesses the following description:
\begin{equation}\label{6.6}
\Cal A_0=\cup_{i=1}^N\Cal M^+_{z_i},
\end{equation}
where $\Cal M^+_{z_i}$ are finite-dimensional unstable manifold of
the equilibrium $z_i\in\Cal R$. Moreover, $\Cal
M^+_{z_i}$ is a $C^1$-submanifold of $V^p_0(\omega)$ which
is diffeomorphic to
$\R^{\kappa^+_i}$, where $\kappa_i^+$ is the instability
index of the equilibrium $z_i$.
\par
3.\ The set $\Cal A_0$ is an exponential attractor of the semigroup
  $S_t$ associated with equation \eqref{4.1}, i.e. there exist
a positive constant $\alpha$ and a monotonic function $Q$ such that,
for every bounded subset $B\subset V^p_0(\omega)$, the following
  estimate holds:
\begin{equation}\label{6.7}
\dist_{V^p_0(\omega)}\(S_tB,\Cal A_\eb\)\le
  Q(\|B\|_{V^p_0(\omega)})e^{-\alpha t}.
\end{equation}
\end{theorem}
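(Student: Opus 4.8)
The plan is to derive Theorem~\ref{Th6.1} from the classical theory of regular attractors of gradient semigroups (Babin--Vishik \cite{4}), feeding into it the facts already established: well-posedness and dissipativity of the limit semigroup $S_t=U^0_{\bar g}(t,0)$ on $V^p_0(\omega)$ (Theorems~\ref{Th1.1}, \ref{Th2.1}, \ref{Th3.1}), its backward uniqueness (Theorem~\ref{Th2.3} with $\eb=0$), and the local unstable manifold theorem at each hyperbolic equilibrium (Theorem~\ref{Th5.1}). The first step is to exploit the gradient structure: multiplying \eqref{4.1} by $\Dt u_0$, integrating over $\omega$ and using $a=a^*$, $f=\nabla_u F$ from \eqref{6.1}, one finds that the functional $\Cal L$ of \eqref{6.2} satisfies $\tfrac{d}{dt}\Cal L(u_0(t))=-2\int_\omega\gamma\Dt u_0(t).\Dt u_0(t)\,dx\le 0$, with equality at a time $t$ precisely when $\Dt u_0(t)=0$, i.e. when $u_0(t)\in\Cal R$. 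Thus $\Cal L$ is a strict Lyapunov functional; it is continuous on $V^p_0(\omega)$ and, by \eqref{3.22}, bounded on $\Cal A_0$, while by hyperbolicity \eqref{6.4} the set $\Cal R=\{z_i\}_{i=1}^N$ is finite, hence discrete.

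For part~1, let $u(\cdot)\in C_b(\R,V^p_0(\omega))$ be a complete trajectory lying on $\Cal A_0$; by \eqref{3.7} such trajectories exhaust $\Cal A_0$. The scalar function $t\mapsto\Cal L(u(t))$ is non-increasing and bounded, so it has finite limits as $t\to\pm\infty$. The $\omega$- and $\alpha$-limit sets of $u$ are non-empty, compact, connected and invariant (using the asymptotic compactness of $S_t$ that already underlies the proof of Theorem~\ref{Th3.1}), and $\Cal L$ is constant on each of them, so the LaSalle invariance principle forces them into $\Cal R$. A connected subset of the discrete set $\Cal R$ is a single point, which gives \eqref{6.5}. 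If $z_+=z_-$ then $\Cal L$ is constant along $u$, so $u$ is an equilibrium; hence every non-stationary complete trajectory on $\Cal A_0$ is a genuine heteroclinic orbit with $z_+\ne z_-$.

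For part~2, the inclusion $\cup_i\Cal M^+_{z_i}\subset\Cal A_0$ is immediate since global unstable manifolds consist of complete bounded trajectories, which belong to $\Cal A_0$ by \eqref{3.7}; conversely, given $u_0\in\Cal A_0$, part~1 shows the complete trajectory through $u_0$ backward-converges to some $z_i$, whence $u_0\in\Cal M^+_{z_i}$, and \eqref{6.6} follows. Theorem~\ref{Th5.1} provides, near each $z_i$, a local unstable manifold which is a $C^1$-submanifold diffeomorphic to $V_+$, hence to $\R^{\kappa_i^+}$. The global manifold is the forward image $\Cal M^+_{z_i}=\cup_{t\ge0}S_t\Cal M^{+,loc}_{z_i}$ (cf. \eqref{5.18} with $\eb=0$); since $S_t$ is injective on $\Cal A_0$ with injective derivative --- backward uniqueness (Theorem~\ref{Th2.3}) applied to the variational equation \eqref{2.16} --- and has the smoothing property, $\Cal M^+_{z_i}$ carries the structure of an injectively immersed $C^1$-manifold diffeomorphic to $\R^{\kappa_i^+}$. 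That it is actually an embedded submanifold of $V^p_0(\omega)$ is where the Lyapunov function is essential: ordering the $z_i$ by decreasing value of $\Cal L$ and arguing by downward induction, one shows $\overline{\Cal M^+_{z_i}}\setminus\Cal M^+_{z_i}\subset\cup_{\Cal L(z_j)<\Cal L(z_i)}\Cal M^+_{z_j}$, so $\Cal M^+_{z_i}$ does not accumulate on itself and the immersion is proper onto its image; this is the argument of \cite{4}.

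For part~3, I would reproduce the Babin--Vishik proof that a regular attractor attracts bounded sets exponentially. By the dissipative estimate \eqref{1.25} with $\eb=0$, $S_tB$ enters a fixed bounded absorbing ball after a time depending only on $\|B\|_{V^p_0(\omega)}$. Inside this ball, asymptotic compactness together with the Lyapunov structure forces every trajectory, after a \emph{uniformly} bounded time, into a prescribed small neighborhood of $\Cal R$: uniformity follows by a compactness/contradiction argument using that $\Cal R$ is finite, that $\Cal L$ decreases by a definite amount per unit time spent away from a neighborhood of $\Cal R$, and that $\Cal L$ is bounded, so only boundedly many such transitions are possible. Finally, near each hyperbolic $z_i$ the local stable/unstable splitting \eqref{5.2}--\eqref{5.3} yields exponential attraction to the local unstable manifold with rate at least $\alpha:=\min_i\nu_i>0$; splicing these three facts and absorbing the finite transition time into a prefactor $Q(\|B\|_{V^p_0(\omega)})$ gives \eqref{6.7}. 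The main obstacle --- and the place where \eqref{6.1} and \eqref{6.4} are genuinely used --- is precisely this uniform control of the transition time: ruling out trajectories that linger arbitrarily long near $\Cal R$ without being captured, and bounding the cascade of equilibria a single trajectory may pass through. Both are settled by the Lyapunov function (finite total drop, definite drop away from $\Cal R$) combined with hyperbolicity (no center directions, hence exponential approach/escape near each $z_i$); since this is entirely classical, in the paper it is invoked from \cite{4} rather than reproved.
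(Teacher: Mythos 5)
Your proposal is correct and follows exactly the route the paper intends: the paper does not prove Theorem \ref{Th6.1} at all but simply invokes the classical regular-attractor theory of Babin--Vishik \cite{4}, and your sketch is a faithful reconstruction of that standard argument (strict Lyapunov functional plus LaSalle for part 1, unstable-manifold decomposition with backward uniqueness for part 2, uniform transition times plus local hyperbolic estimates for part 3). The only caveat is cosmetic: with the sign conventions of \eqref{4.1}, the Lyapunov functional should carry $-2\bar g.u_0$ rather than $+2\bar g.u_0$ for the identity $\frac{d}{dt}\Cal L(u_0(t))=-2\int_\omega\gamma\Dt u_0.\Dt u_0\,dx$ to hold, which is a typo in \eqref{6.2} rather than an error in your argument.
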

The proof of this theorem can be found, e.g. in \cite{4}.
\par
The main task of this section is to construct the analogue
of the regular attractor $\Cal A_0$ for the 'nonautonomous' equation
\eqref{3.16} if $\eb>0$ is small enough. In this case, the
equilibria $z_i\in\Cal R$ should be replaced by the
almost-periodic solutions $u^\eb_{h_\eb,z_i}(t)$, $i=1,\cdots,N$,
constructed in Proposition \ref{Prop5.1} and, instead of the unstable manifolds
$\Cal M^+_{z_i}$, we should use the 'nonautonomous' unstable manifolds
$\Cal M^+_{\eb,h_\eb,z_i}(\tau)$, $\tau\in\R$, defined in \eqref{5.18}.
Analogously to \eqref{6.6}, for every $\tau\in\R$, $h\in\Cal H(g)$
and every sufficiently small $\eb>0$,
we define the attractor $\Cal A_{\eb,h_\eb}(\tau)$
 by the following expression:
\begin{equation}\label{6.8}
\Cal A_{\eb,h_\eb}(\tau):=\cup_{i=1}^N\Cal M^+_{\eb,h_\eb,z_i}(\tau),
\end{equation}
(we recall that, by definition, $h_\eb(t)=h(\eb^{-1}t)$).
 Then, due to \eqref{5.19},  the family of attractors
 $\Cal A_{\eb,h_\eb}(\tau)$, $\tau\in\R$,
is also strictly invariant with respect to the dynamical process
$U^\eb_{h_\eb}(t,\tau)$:
\begin{equation}\label{6.9}
\Cal A_{\eb,h_\eb}(t)=U^\eb_{h_\eb}(t,\tau)\Cal A_{\eb,h_\eb}(\tau).
\end{equation}
Moreover, the following theorem shows that this family is indeed a
nonautonomous regular attractor for the dynamical process
$U^\eb_{h_\eb}(t,\tau)$ if $\eb>0$ is small enough.
\begin{theorem}\label{Th6.2} Let the assumptions of Theorems \ref{Th4.1} and \ref{Th6.1}
hold. Then, there exists $\hat\eb_0'>0$,
 $0<\hat\eb_0'\le\hat\eb_0\ll1$
 such
that, for every $\eb\le\hat\eb_0'$ and $h\in\Cal H(g)$, the
following conditions are satisfied:
\par
1.\ Every bounded solution $u\in C_b(\R,V^p_\eb(\omega))$ of problem
\eqref{5.6} stabilizes as $t\to\pm\infty$ to different
almost-periodic 'equilibria'
of \eqref{5.6} constructed in Proposition \ref{Prop5.1}:
\begin{equation}\label{6.10}
\lim_{t\to\pm\infty}\|u(t)-u_{h_\eb,z_{i_\pm}}^\eb(t)\|_{V^p_\eb(\omega)}=0,\
\ i_\pm\in\{1,\cdots,N\}, \ i_-\ne i_+.
\end{equation}
In particular, $u(\tau)\in \Cal A_{\eb,h_\eb}(\tau)$, for every
$\tau\in\R$.
\par
2. For every fixed $\tau\in\R$,
the sets $\Cal M^+_{\eb,h_\eb, z_i}(\tau)$ are $C^1$-submanifolds of
$V^p_\eb(\omega)$ $C^1$-diffeomorphic to the unstable
manifolds $\Cal M^+_{z_i}$ of the limit autonomous parabolic
 problem \eqref{4.1} (which are independent of $\tau$ and $h$).
\par
3.  The sets $\Cal A_{\eb,h_\eb}(\tau)$, $\tau\in\R$, attract
    exponentially the images of all bounded subsets of $V^p_0(\eb)$,
    i.e, there exist a positive number $\alpha$ and a monotonic
    function $Q$ (which are independent of $\eb$, $h$, $t$ and $\tau$)
    such that
\begin{equation}\label{6.11}
\dist_{V^p_\eb(\omega)}\(U^\eb_{h_\eb}(t,\tau)B,\Cal A_{\eb,h_\eb}(t)\)\le
Q(\|B\|_{V^p_\eb(\omega)})e^{-\alpha(t-\tau)},
\end{equation}
for every bounded subset $B\subset V^p_\eb(\omega)$,  $h\in\Cal
H(g)$ and $t,\tau\in\R$, $t\ge\tau$.
\par
4. The attractors $\Cal A_{\eb,h_\eb}(\tau)$ tend to $\Cal A_0$ as
   $\eb\to0$ in the following sense:
\begin{equation}\label{6.12}
\sup\nolimits_{\tau\in\R}\sup\nolimits_{h\in\Cal H(g)}
\dist_{V^p_0(\omega)}^{sym}\(\Cal
   A_{\eb,h_\eb}(\tau),\Cal A_0\)\le \bar C
\big[\bar\alpha_{R_0}(\eb)\big]^{\kappa},
\end{equation}
where the constants $\bar C$, $R_0$ and $0<\kappa<1$ are independent
of $\eb$, $\bar\alpha_{R_0}(\eb)$ is the same as in Corollaries \ref{Cor4.1}
and \ref{Cor4.2} and
\begin{equation}\label{6.13}
\dist^{sym}_V(X,Y):=\max\{\dist_V(X,Y),\dist_V(Y,X)\}
\end{equation}
is the symmetric Hausdorff distance between sets $X$ and $Y$ in $V$.
\end{theorem}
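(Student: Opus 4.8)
The plan is to recognize all four assertions as instances of the general theory of nonautonomous perturbations of regular attractors from \cite{12,13,29}, with the ``unperturbed'' dynamics given by the solving semigroup $S_t$ of the limit parabolic equation \eqref{4.1} on $V^p_0(\omega)$ and the ``perturbation'' given by the discrete cascades $U^\eb_{h_\eb}(l,m)$ of \eqref{2.27}, realized on the $\eb$-independent space $V^p_0(\omega)$ via the embeddings \eqref{2.28} and the bound \eqref{2.29}. The first step is therefore to verify the three standing hypotheses of that theory: (i) $S_t$ has the regular attractor $\Cal A_0=\cup_i\Cal M^+_{z_i}$ with all $z_i$ hyperbolic, the global Lyapunov function \eqref{6.2}, and the exponential attraction \eqref{6.7} --- this is Theorem \ref{Th6.1}, which holds under \eqref{6.1} and \eqref{6.4}; (ii) the time-one maps $U^\eb_{h_\eb}(n+1,n)$ and their Frechet derivatives converge, as $\eb\to0$ and uniformly in $n\in\Bbb Z$ and $h\in\Cal H(g)$, to $S_1$ and $D_uS_1$ on bounded subsets of $V^p_0(\omega)$, at the rate $\bar\alpha_{R_0}(\eb)$ --- this is Corollaries \ref{Cor4.1} and \ref{Cor4.2} (with the explicit power from Theorem \ref{Th4.3} if wanted); (iii) the perturbed ``equilibria'' $u^\eb_{h_\eb,z_i}$ and their local nonautonomous unstable manifolds $\Cal M^{+,loc}_{\eb,h_\eb,z_i}(\tau)$ exist, are almost periodic in $\tau$, and are $C^1$-close to $z_i$ and to $\Cal M^{+,loc}_{z_i}$ at the rate $\bar\alpha_{R_0}(\eb)$ --- this is Proposition \ref{Prop5.1} and Theorems \ref{Th5.1}--\ref{Th5.2}, together with Remark \ref{Rem5.1}.

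Granting these inputs, I would fix via \eqref{2.29} and the smoothing of Corollary \ref{Cor1.2} a ball $\Cal B_{R_0}\subset V^p_0(\omega)$ that is uniformly (in $\eb\le\hat\eb_0$ and $h\in\Cal H(g)$) absorbing for all the processes $U^\eb_{h_\eb}(t,\tau)$, and reduce every statement to $B\subset\Cal B_{R_0}$ and to the $V^p_0$-metric at integer times. The abstract theory then produces, for $\eb$ small, the compact invariant family $\{\Cal A_{\eb,h_\eb}(\tau)\}$ given by \eqref{6.8} as a nonautonomous regular attractor, which yields assertion 3: the exponential attraction \eqref{6.7} of $\Cal A_0$ transfers to $U^\eb_{h_\eb}$ because, by \eqref{4.10} on a fixed finite time window, $U^\eb_{h_\eb}(\cdot,\cdot)\Cal B_{R_0}$ enters an arbitrarily small neighborhood of $\cup_i\Cal M^{+,loc}_{z_i}$, hence of $\cup_i\Cal M^{+,loc}_{\eb,h_\eb,z_i}(\cdot)$ by Theorems \ref{Th5.1}--\ref{Th5.2}, after which the local hyperbolic trapping near each $u^\eb_{h_\eb,z_i}$ (hyperbolicity inherited from $z_i$ through Proposition \ref{Prop5.1}) and the invariance \eqref{6.9} give \eqref{6.11} by the usual covering argument, uniformly in $h$ and $\tau$. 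Assertion 2 is a restatement of Theorems \ref{Th5.1}--\ref{Th5.2} and Remarks \ref{Rem5.1}--\ref{Rem5.2}, the last supplying, in the gradient case \eqref{6.1}, the submanifold property of the global unstable manifolds and their $C^1$-diffeomorphism to the $\Cal M^+_{z_i}$.

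For assertion 1, note that any complete solution $u\in C_b(\R,V^p_\eb(\omega))$ of \eqref{5.6} satisfies $u(\tau)=U^\eb_{h_\eb}(\tau,\tau-n)u(\tau-n)$ with $u(\tau-n)$ in a bounded set, so assertion 3 forces $\dist_{V^p_\eb(\omega)}(u(\tau),\Cal A_{\eb,h_\eb}(\tau))=0$, i.e.\ $u(\tau)\in\cup_i\Cal M^+_{\eb,h_\eb,z_i}(\tau)$; by the definition \eqref{5.13}, \eqref{5.18} of these manifolds this means $\|u(t)-u^\eb_{h_\eb,z_{i_-}}(t)\|_{V^p_\eb(\omega)}\to0$ as $t\to-\infty$. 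By \eqref{6.9} the trajectory stays in $\Cal A_{\eb,h_\eb}(t)$, hence (using the part of \eqref{6.12} established below) within a fixed small neighborhood of the compact set $\Cal A_0$; there the limit Lyapunov function \eqref{6.2} evaluated along $u$, corrected by the total-$t$-derivative term $\eb^2\int_\omega a\Dt u.\Dt u\,dx$ (legitimate since $a=a^*$), is an approximate Lyapunov function whose unit-step decrements are negative up to $O(\bar\alpha_{R_0}(\eb))$, which as in the limit case forces convergence of the forward trajectory to a single $u^\eb_{h_\eb,z_{i_+}}(t)$. The strict inequality $i_-\ne i_+$ holds for $\eb$ small by a compactness contradiction: a trajectory homoclinic to some $u^\eb_{h_\eb,z_i}$ would, along $\eb\to0$, produce a nonconstant homoclinic orbit of the gradient system \eqref{4.1}, excluded by the strict decrease of $\Cal L$.

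The main remaining --- and hardest --- task is the quantitative bound \eqref{6.12}. The upper half $\dist_{V^p_0(\omega)}(\Cal A_{\eb,h_\eb}(\tau),\Cal A_0)\le\bar C[\bar\alpha_{R_0}(\eb)]^\kappa$ is a quantitative version of Theorem \ref{Th3.2}: a point of $\Cal A_{\eb,h_\eb}(\tau)$ is $U^\eb_{h_\eb}(\tau,\tau-n)$ applied to a point close to some $z_i$ (by \eqref{5.7}), so by \eqref{4.10} and \eqref{6.7} it lies within $Qe^{-\alpha n}+e^{K'n}\bar\alpha_{R_0}(\eb)$ of $\Cal A_0$, and the choice $n\sim|\ln\bar\alpha_{R_0}(\eb)|/(\alpha+K')$ gives $\kappa=\alpha/(\alpha+K')<1$. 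The lower half $\dist_{V^p_0(\omega)}(\Cal A_0,\Cal A_{\eb,h_\eb}(\tau))\le\bar C[\bar\alpha_{R_0}(\eb)]^\kappa$ is the genuine difficulty: each $v\in\Cal A_0$ lies on some $\Cal M^+_{z_i}$, i.e.\ is a forward image of $\Cal M^{+,loc}_{z_i}$, and one must propagate the $C^1$-closeness of $\Cal M^{+,loc}_{\eb,h_\eb,z_i}(\tau-m)$ to $\Cal M^{+,loc}_{z_i}$ forward under $U^\eb_{h_\eb}(\tau,\tau-m)$ all the way to $v$; since on the gradient attractor every point is reached from a local unstable manifold through at most $N$ bounded-time ``transitions'' between equilibria, with each transition costing an interpolation loss between the $L^2$-rate $\bar\alpha_{R_0}(\eb)$ and the bounded $V^{p+\delta}$-size (cf.\ \eqref{4.12}, \eqref{4.18}), the accumulated error stays a fixed power $[\bar\alpha_{R_0}(\eb)]^\kappa$, $0<\kappa<1$. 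Carrying out this induction over the Lyapunov ordering of the $z_i$ while keeping all constants uniform in $h\in\Cal H(g)$ and $\tau\in\R$ --- precisely the scheme of \cite{12,13,29} --- is the principal obstacle; everything else is a routine assembly of the $\eb$-uniform estimates of Sections \ref{s1}--\ref{s5}.
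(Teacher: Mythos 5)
Your proposal is correct and follows essentially the same route as the paper: the paper's own proof is likewise a reduction to the abstract nonautonomous perturbation theory of regular attractors of \cite{12,29}, applied to the discrete cascades $U^\eb_{h_\eb}(l,m)$ on the $\eb$-independent space $V^p_0(\omega)$ after verifying uniform dissipativity (via \eqref{2.29}), injectivity (Theorem \ref{Th2.3}), the regularity of $\Cal A_0$ (Theorem \ref{Th6.1}), and the $C^1$-convergence of the processes to $S_{l-m}$ (Corollaries \ref{Cor2.4}, \ref{Cor4.1}, \ref{Cor4.2}), followed by the passage to continuous time via \eqref{6.15}. Your additional unpacking of the internals of that abstract theory (the trapping/covering argument, the approximate Lyapunov function, and the interpolation-based induction for \eqref{6.12}) is consistent with, and more detailed than, what the paper itself records.
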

{\it Sketch of proof.} The result of Theorem \ref{Th6.2} is a corollary of
the nonautonomous perturbation theory of regular attractors
developed in \cite{12,29}. In order to apply this theory to equation
\eqref{5.6}, we consider the discrete processes
\begin{equation}\label{6.14}
U^\eb_{h_\eb}(l,m): V^p_0(\omega)\to V^p_0(\omega)
\end{equation}
associated with this problem in phase space $V^p_0(\omega)$ which
is independent of $\eb$. Then, it follows Corollaries \ref{Cor2.4}, \ref{Cor4.1} and
\ref{Cor4.2} then these processes tend (together with their Frechet
derivative) as $\eb\to0$ to the semigroup $S_{l-m}$ associated with
the limit parabolic equation \eqref{4.1} in the phase space
$V^p_0(\omega)$ and this convergence is uniform with respect to
$h\in \Cal H(g)$. Moreover, estimate \eqref{2.29} guarantees the
uniform (with respect to $\eb$ and $h\in\Cal H(g)$) dissipativity
of these processes and Theorem \ref{Th2.3} gives the injectivity of all the
operators \eqref{6.14}. We also recall that the limit discrete
semigroup $S_n$, $n\in\Bbb N$, possesses the {\it regular}
attractor $\Cal A_0$ (due to Theorem \ref{Th6.1}). Then, arguing in a
standard way (see \cite{12}), we derive that discrete processes
\eqref{6.14} possess the nonautonomous regular attractors $\Cal
A_{\eb,h_\eb}(l)$, $l\in\Bbb Z$ (if $\eb>0$ is small enough) in
$V^p_0(\omega)$ which satisfy the discrete analogue of Theorem \ref{Th6.2}
(since all of the estimates formulated in Corollaries \ref{Cor2.4}, \ref{Cor4.1} and
\ref{Cor4.2} are uniform with respect to $h\in\Cal H(g)$ then estimates
\eqref{6.11} and \eqref{6.12} also hold for $\Cal
A_{\eb,h_\eb}(l)$ uniformly with respect to $h\in\Cal H(g)$).
\par
When the discrete nonautonomous attractors $\Cal A_{\eb,h_\eb}(l)$,
$l\in\Bbb Z$, for
processes \eqref{6.14} (which satisfy all of
the assertions of Theorem \ref{Th6.2}) are already constructed, we can extend
in a standard way
this result to the continuous case
by the following expression:
\begin{equation}\label{6.15}
\Cal A_{\eb,h_\eb}(\tau):=\Cal A_{\eb,T_\tau h_\eb}(0)
\end{equation}
which is an immediate corollary of \eqref{6.20}, see \cite{12} for
the details. Theorem \ref{Th6.2} is proved.

\qed
\begin{corollary}\label{Cor6.1} Let the assumptions of Theorem \ref{Th6.2}
hold. Assume also that the almost-periodic function $g(t)-\bar g$
(where $\bar g$ is a mean value  of $g(t)$) has a bounded primitive
\begin{equation}\label{6.16}
g(t)-\bar g=\Dt G(t),\ \ G\in L^2_b(\Omega).
\end{equation}
Then, estimate \eqref{6.12} can be improved as follows:
\begin{equation}\label{6.17}
\sup\nolimits_{\tau\in\R}\sup\nolimits_{h\in\Cal H(g)}
\dist_{V^p_0(\omega)}^{sym}\(\Cal
   A_{\eb,h}(\tau),\Cal A_0\)\le \bar C_1
\eb^{\kappa_1},
\end{equation}
where $\bar C_1$ is independent of $\eb$,
$\kappa_1:=\kappa_\delta\cdot\kappa/2$ and $\kappa_\delta$ is defined in
\eqref{4.12}.
\end{corollary}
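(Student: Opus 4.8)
The plan is to obtain \eqref{6.17} without establishing anything new about the attractors themselves: estimate \eqref{6.12} of Theorem~\ref{Th6.2} already bounds the symmetric distance by $\bar C\,[\bar\alpha_{R_0}(\eb)]^{\kappa}$, and Theorem~\ref{Th4.3} already makes the abstract rate $\bar\alpha_{R_0}(\eb)$ polynomially explicit precisely under a bounded–primitive hypothesis; so it suffices to check that Theorem~\ref{Th4.3} is applicable in the present almost-periodic setting and then to compose the two statements, keeping careful track of constants.

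First I would verify the hypotheses of Theorem~\ref{Th4.3}. Since $g\in AP(\R,L^{p+\delta}(\omega))$, it is in particular bounded, hence $g\in L^{p+\delta}_b(\Omega)$; its temporal mean is exactly the $\bar g$ appearing in the limit equation \eqref{4.1}; and $g_\eb(t)=g(\eb^{-1}t)$, so \eqref{3.25} holds. The standing assumption \eqref{6.16} of the corollary is word for word condition \eqref{4.20}. Consequently Theorem~\ref{Th4.3} applies with $R=R_0$, where $R_0$ is the radius of the uniform (in $\eb$ and in $h\in\Cal H(g)$) absorbing ball in $V^p_0(\omega)$ entering Theorem~\ref{Th6.2}, and yields $\bar\alpha_{R_0}(\eb)=\bar C_{R_0}\eb^{\kappa_\delta/2}$, with $\kappa_\delta$ the exponent of the interpolation inequality \eqref{4.12} and $\bar C_{R_0}$ independent of $\eb$ and of $h$.

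Then I would substitute this identity into the right-hand side of \eqref{6.12}. Raising to the power $\kappa\in(0,1)$ supplied by Theorem~\ref{Th6.2} gives $[\bar\alpha_{R_0}(\eb)]^{\kappa}=(\bar C_{R_0})^{\kappa}\eb^{\kappa_\delta\kappa/2}$, whence
\begin{equation*}
\sup_{\tau\in\R}\sup_{h\in\Cal H(g)}\dist^{sym}_{V^p_0(\omega)}\big(\Cal A_{\eb,h_\eb}(\tau),\Cal A_0\big)\le\bar C\,(\bar C_{R_0})^{\kappa}\,\eb^{\kappa_\delta\kappa/2},
\end{equation*}
which is exactly \eqref{6.17} with $\bar C_1:=\bar C\,(\bar C_{R_0})^{\kappa}$ and $\kappa_1:=\kappa_\delta\kappa/2$. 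I do not expect a genuine obstacle here: the proof is essentially bookkeeping. The single point deserving attention is that both inputs must be uniform with respect to $h\in\Cal H(g)$ — for Theorem~\ref{Th4.3} this follows from the bound $\|H_\eb\|_{L^2_b(\Omega)}\le\|G\|_{L^2_b(\Omega)}$ valid for every $H\in\Cal H(G)$ (so the primitive stays bounded along the whole hull), and for Theorem~\ref{Th6.2} it is already part of the statement — so the double supremum survives the composition. This completes the argument.
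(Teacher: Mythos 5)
Your argument is exactly the paper's: the authors dispose of Corollary \ref{Cor6.1} in one line as "an immediate corollary of \eqref{6.12} and Theorem \ref{Th4.3}," and your proposal simply carries out that composition explicitly, including the correct identification $\kappa_1=\kappa_\delta\kappa/2$ and the check of uniformity over the hull. The proof is correct and follows the same route.
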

Indeed, \eqref{6.17} is an immediate corollary of \eqref{6.12}
and Theorem \ref{Th4.3}.

\begin{corollary}\label{Cor6.2} Let the assumptions of
Theorem \ref{Th6.2} hold. Then the nonautono\-mous regular attractor $\Cal
A_{\eb,g}(t)$, $t\in\R$, of \eqref{5.6} and its uniform attractor
$\Cal A_\eb$ (constructed in Theorem \ref{Th3.1}) satisfy the following
relation:
\begin{equation}\label{6.18}
\Cal A_\eb=\cup_{h\in\Cal H(g)}\Cal
A_{\eb,h_\eb}(t)=\big[\cup_{t\in\R}\Cal A_{\eb,g_\eb}(t)\big]_{V^p_0(\omega)}
\end{equation}
and, consequently
\begin{equation}\label{6.19}
\dist^{sym}_{V^p_0(\omega)}\(\Cal A_\eb,\Cal A_0\)\le \bar
C\big[\alpha_{R_0}(\eb)\big]^\kappa.
\end{equation}
In particular, the uniform attractors $\Cal A_\eb$ tend to $\Cal A_0$
(upper and lower semicontinuous) as $\eb\to0$.
\end{corollary}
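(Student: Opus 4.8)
The plan is to obtain both the identity \eqref{6.18} and the estimate \eqref{6.19} from the structural results already in hand --- the kernel representation \eqref{3.7} of the uniform attractor applied to \eqref{3.16}, the translation property \eqref{5.20}, and assertions 1--4 of Theorem \ref{Th6.2} --- so that no new PDE estimate is needed. First I would record the elementary scaling fact that, since $g_\eb(t)=g(\eb^{-1}t)$, one has $T_sg_\eb=(T_{\eb^{-1}s}g)_\eb$; as the rescaling map $\xi\mapsto\xi(\eb^{-1}\cdot)$ is a homeomorphism of $L^p_{loc,w}(\Omega)$, it commutes with taking closures, so $\Cal H(g_\eb)=\{h_\eb:\ h\in\Cal H(g)\}$ with $h_\eb(t):=h(\eb^{-1}t)$. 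Hence \eqref{3.7} for \eqref{3.16} reads $\Cal A_\eb=\Pi_0\bigcup_{h\in\Cal H(g)}\Cal K^\eb_{h_\eb}$, where $\Cal K^\eb_{h_\eb}$ is the set of complete bounded solutions of \eqref{5.6} with right-hand side $h_\eb$.

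Next I would identify $\Pi_0\Cal K^\eb_{h_\eb}$ with $\Cal A_{\eb,h_\eb}(0)$. The inclusion $\Cal A_{\eb,h_\eb}(0)\subset\Pi_0\Cal K^\eb_{h_\eb}$ is built into the definition of the global unstable manifolds \eqref{5.18}: each of their points is the value at $t=0$ of a solution $u\in C_b(\R,V^p_\eb(\omega))$ of \eqref{5.6}. The reverse inclusion is exactly assertion 1 of Theorem \ref{Th6.2}: any such $u$ stabilises backward to one of the almost-periodic ``equilibria'' $u^\eb_{h_\eb,z_i}$, so $u(0)\in\Cal M^+_{\eb,h_\eb,z_i}(0)\subset\Cal A_{\eb,h_\eb}(0)$. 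Thus $\Cal A_\eb=\bigcup_{h\in\Cal H(g)}\Cal A_{\eb,h_\eb}(0)$, and using \eqref{5.20} together with $T_\tau h_\eb=(T_{\eb^{-1}\tau}h)_\eb$ and the invariance $T_{\eb^{-1}\tau}\Cal H(g)=\Cal H(g)$, the union $\bigcup_{h\in\Cal H(g)}\Cal A_{\eb,h_\eb}(\tau)$ is independent of $\tau$ and equals $\Cal A_\eb$; this is the first equality of \eqref{6.18}.

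For the second equality of \eqref{6.18} I would use the density of the orbit $\{T_sg:\ s\in\R\}$ in $\Cal H(g)$. Since $\Cal A_{\eb,g_\eb}(t)=\Cal A_{\eb,(T_{\eb^{-1}t}g)_\eb}(0)$, the set $\bigcup_{t\in\R}\Cal A_{\eb,g_\eb}(t)$ equals $\bigcup_{s\in\R}\Cal A_{\eb,(T_sg)_\eb}(0)$, which lies in the compact set $\Cal A_\eb$, hence so does its closure, giving ``$\subset$''. For ``$\supset$'' one fixes $h\in\Cal H(g)$ and $u_0\in\Cal A_{\eb,h_\eb}(0)$, picks $s_n$ with $T_{s_n}g\to h$, takes the complete bounded trajectories through $\Cal A_{\eb,(T_{s_n}g)_\eb}(\cdot)$ (uniformly bounded by \eqref{3.22}), and passes to the limit using the compact embeddings and the backward uniqueness/stabilisation of Theorems \ref{Th2.3} and \ref{Th6.2}, exhibiting $u_0$ as a limit of points of $\Cal A_{\eb,(T_{s_n}g)_\eb}(0)$. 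Once \eqref{6.18} is established, \eqref{6.19} follows from \eqref{6.12}: for upper semicontinuity $\dist_{V^p_0(\omega)}(\Cal A_\eb,\Cal A_0)=\sup_h\dist_{V^p_0(\omega)}(\Cal A_{\eb,h_\eb}(0),\Cal A_0)\le\bar C[\bar\alpha_{R_0}(\eb)]^\kappa$, and for lower semicontinuity one uses $\Cal A_{\eb,h_\eb}(\tau)\subset\Cal A_\eb$ to get $\dist_{V^p_0(\omega)}(\Cal A_0,\Cal A_\eb)\le\dist_{V^p_0(\omega)}(\Cal A_0,\Cal A_{\eb,h_\eb}(\tau))\le\bar C[\bar\alpha_{R_0}(\eb)]^\kappa$ for any admissible $h,\tau$. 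Letting $\eb\to0$, so that $\bar\alpha_{R_0}(\eb)\to0$, yields the two-sided convergence $\Cal A_\eb\to\Cal A_0$.

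The hard part will be the ``$\supset$'' half of the second equality, i.e. the upper semicontinuous dependence of the kernel section $h\mapsto\Cal A_{\eb,h_\eb}(0)$ on the symbol: this is standard but is not packaged in a ready-to-cite form above, so it must be assembled from the compactness and uniqueness ingredients underlying Theorems \ref{Th3.1} and \ref{Th6.2}. Everything else is a purely combinatorial consequence of the already-established structure theorems.
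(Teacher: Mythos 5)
Your handling of the first equality in \eqref{6.18} and of estimate \eqref{6.19} is correct and is essentially the paper's argument: the identity $\Cal A_\eb=\cup_{h\in\Cal H(g)}\Cal A_{\eb,h_\eb}(\tau)$ is read off from the kernel description \eqref{3.7} (together with the observation $\Cal H(g_\eb)=\{h_\eb:\,h\in\Cal H(g)\}$) and assertion 1 of Theorem \ref{Th6.2}, and \eqref{6.19} then follows from \eqref{6.12} exactly as you write. The problem is the ``$\supset$'' half of the second equality, which you yourself flag as the hard part. The argument you sketch there --- approximate $h\in\Cal H(g)$ by translates $T_{s_n}g$, take complete bounded trajectories of the problems with symbols $(T_{s_n}g)_\eb$, and pass to the limit by compactness and backward uniqueness --- proves the wrong inclusion. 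That scheme shows that every limit of points $u_0^n\in\Cal A_{\eb,(T_{s_n}g)_\eb}(0)$ lies in $\Cal A_{\eb,h_\eb}(0)$, i.e.\ \emph{upper} semicontinuity of the kernel sections in the symbol; what you need is that every point of $\Cal A_{\eb,h_\eb}(0)$ arises as such a limit, i.e.\ \emph{lower} semicontinuity. The latter does not follow from compactness and uniqueness alone: for a given $u_0\in\Cal A_{\eb,h_\eb}(0)$ you must \emph{produce} approximants in the neighbouring kernels, which would require a quantitative continuous dependence of the manifolds $\Cal M^+_{\eb,h_\eb,z_i}(0)$ on $h$ that is not established in citable form.

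The step becomes immediate if you instead use the minimality characterization of the uniform attractor from Remark \ref{Rem3.1} together with the exponential attraction property \eqref{6.11}, which is the route the paper takes. Set $K:=\big[\cup_{t\in\R}\Cal A_{\eb,g_\eb}(t)\big]_{V^p_0(\omega)}$. By the ``$\subset$'' half you have already proved, $K$ is a closed subset of the compact set $\Cal A_\eb$, hence compact. By \eqref{6.11} applied with $h=g$ and the inclusion $\Cal A_{\eb,g_\eb}(t+\tau)\subset K$, the set $K$ attracts $U^\eb_{g_\eb}(t+\tau,\tau)B$ uniformly in $\tau$ for every bounded $B$. Minimality of $\Cal A_\eb$ among compact uniformly attracting sets then gives $\Cal A_\eb\subset K$, which is exactly the missing inclusion. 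I recommend replacing your limit argument by this one; the rest of your proof can stand as written.
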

Indeed, the first equality in \eqref{6.18} is an immediate corollary
the first assertion of Theorem \ref{Th6.2} and description \eqref{3.7} of
uniform attractor $\Cal A_\eb$. The second inequality in \eqref{6.18}
can be easily verified using the exponential attraction property
\eqref{6.11} and the alternative definition of the uniform attractor
$\Cal A_\eb$ which is formulated in Remark \ref{Rem3.1}. Estimate \eqref{6.19}
follows immediately from \eqref{6.17} and \eqref{6.18}.
\begin{remark}\label{Rem6.1} The first assertion of Theorem \ref{Th6.2} can be
reformulated as follows:
problem \eqref{5.6} has exactly $N$ almost-periodic solutions
$u_{h_\eb,z_i}^\eb(t)$ which are localized near the equilibria $z_i\in\Cal
R$, $i=1,\cdots,N$, and every other bounded solution
$u\in C_b(\R,V^p_0(\omega))$ is a heteroclinic connection between two
different almost periodic solutions of this problem.
\end{remark}
 \begin{remark}\label{Rem6.2} We note that condition \eqref{6.16} is,
 obviously, always satisfied if the external force $g(t)$ is
{\it periodic} with respect to $t$. Thus, in case of periodic $g$,
we have estimate \eqref{6.17} for the symmetric distance
 between the perturbed ($\Cal A_{\eb,h}(t)$) and nonperturbed ($\Cal A_0$)
regular attractors
without any additional assumptions
 and (as a corollary) the following estimate is satisfied for
the uniform attractors:
\begin{equation}\label{6.20}
\dist^{sym}_{V^p_0(\omega)}\(\Cal A_\eb,\Cal A_0\)\le \bar
C_1\eb^{\kappa_1}.
\end{equation}
Unfortunately, in more general case of quasiperiodic or
almost-periodic external forces, condition \eqref{6.16} is not
satisfied automatically and should be verified, see e.g. \cite{9},
and \cite{18} for various sufficient conditions.
\end{remark}


\section{Appendix.  Uniform elliptic regularity in $L^p$-spaces}\label{a1}
In this Appendix, we consider the following singular perturbed elliptic
boundary value problem in a half-cylinder
$\Omega_+:=\R_+\times\omega$:
\begin{equation}\label{A.1}
a(\eb^2\Dt^2u+\Dx u)-\gamma\Dt u=h(t),\ \ u\DOM=0,\ \ u\tto=u_0,
\end{equation}
where $u=(u^1,\cdots,u^k)$ is a vector-valued function,
 $a$ and $\gamma$ are given constant matrices such that $a+a^*>0$ and
$\gamma=\gamma^*>0$ and the right-hand side $h$ belongs to
 $L^p(\Omega_+)$, $2\le p<\infty$.

The main result of this appendix is the following uniform (with
respect to $\eb$) maximal regularity estimate for the solutions of
 \eqref{A.1}.
\begin{theorem}\label{ThA.1} Let $u\in W^{(1,2),p}_{\eb}(\Omega_+)$
 be a solution of \eqref{A.1}. Then,
the following estimate holds:
\begin{equation}\label{A.2}
\|u\|_{W^{(1,2),p}_\eb(\Omega_+)}\le C\(\|u_0\|_{V^p_\eb(\omega)}+
\|h\|_{L^p(\Omega_+)}\),
\end{equation}
where the constant $C$ is independent of $\eb\in[0,\eb_0]$.
In particular, $V^p_\eb(\omega)$ is a uniform (with respect to $\eb$)
trace space for functions belonging to $W^{(1,2),p}_\eb(\Omega_+)$.
\end{theorem}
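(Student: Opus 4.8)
The plan is to reduce the problem to the case of vanishing initial data and then to extract \eqref{A.2} from an operator-valued Fourier-multiplier analysis in the longitudinal variable $t$; the point is that the $\eb$-weights built into the norms \eqref{1.4} and \eqref{1.5} are precisely what makes the relevant symbols bounded uniformly as $\eb\to0$.

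\emph{Step 1: reduction to $u_0=0$.} First I would use (and, where needed, construct) a uniform extension operator $\Cal E_\eb\colon V^p_\eb(\omega)\to W^{(1,2),p}_\eb(\Omega_+)$ with $\|\Cal E_\eb u_0\|_{W^{(1,2),p}_\eb(\Omega_+)}\le C\|u_0\|_{V^p_\eb(\omega)}$, $C$ independent of $\eb$: one builds it by superposing the classical parabolic extension, which accounts for the $W^{2(1-1/p),p}(\omega)$-component of the $V^p_\eb$-norm, with an extension living on the $t$-scale $\eb$, which accounts for the $\eb^{1/p}\|\cdot\|_{W^{2-1/p,p}(\omega)}$-component. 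Replacing $u$ by $w:=u-\Cal E_\eb u_0$, the function $w$ solves \eqref{A.1} with $u_0=0$ and with $h$ replaced by $\tilde h:=h-[a(\eb^2\Dt^2+\Dx)-\gamma\Dt]\Cal E_\eb u_0\in L^p(\Omega_+)$, $\|\tilde h\|_{L^p}\le C(\|h\|_{L^p}+\|u_0\|_{V^p_\eb})$. Hence it suffices to prove $\|w\|_{W^{(1,2),p}_\eb(\Omega_+)}\le C\|\tilde h\|_{L^p(\Omega_+)}$ uniformly in $\eb$ for solutions with zero initial trace.

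\emph{Step 2: whole-line multiplier estimate.} Extend $w$ and $\tilde h$ by zero to $t<0$; since $w(0)=0$ the derivative $\Dt\bar w$ stays in $L^p$ and $\bar w$ satisfies on $\R\times\omega$ the equation $a(\eb^2\Dt^2\bar w+\Dx\bar w)-\gamma\Dt\bar w=\bar h+\eb^2 a\,(\Dt w)(0^+)\otimes\delta(t)$. Applying the Fourier transform in $t$ with dual variable $\xi$ and setting $B_\xi:=-a\Dx+\eb^2\xi^2 a+i\xi\gamma$ on $L^p(\omega)$ (domain $W^{2,p}\cap W^{1,p}_0$), one checks, using $a+a^*>0$, $\gamma=\gamma^*>0$ and the coercivity of the Dirichlet operator $-a\Dx$, that $0\notin\sigma(B_\xi)$ for all $\xi\in\R$, $\eb\ge0$; the symbols governing $\Dx\bar w$, $\Dt\bar w$, $\eb^2\Dt^2\bar w$ are, respectively, $a^{-1}[I-(\eb^2\xi^2 a+i\xi\gamma)B_\xi^{-1}]$, $i\xi B_\xi^{-1}$ and $-\eb^2\xi^2 B_\xi^{-1}$, so the estimate reduces to $\Cal R$-boundedness, uniform in $\eb$ and together with the Mikhlin derivative condition, of the family $\{B_\xi^{-1},\ |\xi|B_\xi^{-1},\ \eb^2\xi^2 B_\xi^{-1}\}$. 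This is a parameter-elliptic resolvent estimate: $-a\Dx$ is $\Cal R$-sectorial on $L^p(\omega)$, and the perturbation $\eb^2\xi^2 a+i\xi\gamma$ is accretive, since $\Ree((\eb^2\xi^2 a+i\xi\gamma)v.\bar v)=\eb^2\xi^2\Ree(av.\bar v)\ge0$, so it can only improve the sectoriality and never drags the spectrum across $0$, and the resulting bounds degenerate nicely as $\eb\downarrow0$ (the case $\eb=0$ being the classical parabolic symbol $i\xi(-a\Dx+i\xi\gamma)^{-1}$). The operator-valued Mikhlin theorem of Weis (applicable since $L^p(\omega)$ is a UMD space) then gives the desired uniform bounds on $L^p(\R\times\omega)$; for $p=2$ this step collapses to Plancherel plus the uniform bound on the operator \emph{norms} of the above symbols, which can instead be obtained directly by energy estimates in $\omega$ as in the proof of Lemma \ref{Lem1.1}.

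\emph{Step 3: the $\delta$-source and conclusion.} It remains to absorb the contribution of $\eb^2 a\,(\Dt w)(0^+)\otimes\delta(t)$: its image under the whole-line solution operator has $W^{(1,2),p}_\eb$-norm at most $C\eb^2\|(\Dt w)(0^+)\|_{Y_\eb}$ for the appropriate trace space $Y_\eb$, while the boundary-trace inequality for $\Dt w$ at $t=0$---again with $\eb$-weights arranged so the constant is $\eb$-independent---yields $\eb^2\|(\Dt w)(0^+)\|_{Y_\eb}\le\mu\|w\|_{W^{(1,2),p}_\eb(\Omega_+)}+C_\mu\|\tilde h\|_{L^p(\Omega_+)}$ with $\mu$ as small as we wish. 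Taking $\mu$ small and absorbing gives $\|w\|_{W^{(1,2),p}_\eb(\Omega_+)}\le C\|\tilde h\|_{L^p(\Omega_+)}$ and hence \eqref{A.2}; the uniform trace characterization of $V^p_\eb(\omega)$ then drops out, since $\Cal E_\eb$ provides extensions and \eqref{A.2} applied to the homogeneous problem furnishes the converse quantitative bound on $u(0)$. \emph{The main obstacle} is exactly the bookkeeping of $\eb$-powers in the trace and extension estimates of Steps 1 and 3, which must be carried out so that the transition between $\Omega_+$ and $\R\times\omega$ and the absorption of the $\delta$-source stay uniform in $\eb$; this is what pins down the precise form of \eqref{1.4}--\eqref{1.5}, and once those are fixed Step 2 is standard parameter-elliptic theory.
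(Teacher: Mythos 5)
Your architecture (reduce to zero initial data, extend by zero, operator-valued multipliers in $t$ via $\Cal R$-boundedness and Weis's theorem) is genuinely different from the paper's, which instead localizes $\omega$ to $\R^n$, reduces the system to scalar complex equations by putting $a^{-1}\gamma$ in Jordan normal form, verifies the scalar Mikhlin condition \eqref{A.16} explicitly and uniformly in $\eb$ via \eqref{A.17}--\eqref{A.20}, and treats the initial data by constructing the solution operator $T_+$ of the homogeneous problem through the factorization $\Dt u=-A_\eb(1-\Dx)u$ (Lemma \ref{LemA.3}, Proposition \ref{PropA.1}). The difficulty is that the three places you defer as ``bookkeeping'' are exactly where the content of the theorem lives, and at least one of them fails as stated.

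The critical gap is Step 3. The trace of $\Dt w$ at $t=0$ compatible with $w\in W^{(1,2),p}_\eb(\Omega_+)$ carries the weight $\eb^{1+1/p}$ and only the spatial regularity $W^{1-1/p,p}(\omega)$ (this is the left-hand inequality of \eqref{A.3}, and it is sharp), while the whole-line response to the source $\eb^2a\,(\Dt w)(0^+)\otimes\delta(t)$ is essentially the extremal function for that same trace inequality: its $W^{(1,2),p}_\eb$-norm is of order $\eb^{1+1/p}\|(\Dt w)(0^+)\|_{W^{1-1/p,p}(\omega)}$ (the one-dimensional Green's function of $\eb^2\Dt^2$ has amplitude $\eb^{-1}$ and width $\eb$). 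The two powers of $\eb$ cancel exactly, so the best available bound is $\|w\|\le C\|\tilde h\|+C'\|w\|$ with $C'$ of order one, and absorption fails; making $\mu$ arbitrarily small requires interpolating against a lower-order norm of $w$, which is not controlled by $\|\tilde h\|_{L^p}$ without the very a priori estimate you are trying to prove. (Splitting off the parabolic root of the symbol does not rescue this: the forward response gains powers of $\eb$ in amplitude but then demands $W^{2-2/p,p}$ spatial regularity of $(\Dt w)(0^+)$, which the trace does not supply.) The clean fix is the paper's: solve the whole-line problem with the zero-extended $\tilde h$, and handle the resulting nonzero trace by the homogeneous solution operator $T_+$ rather than by a $\delta$-source. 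Two further assertions also carry real weight without proof: the uniform-in-$\eb$ $\Cal R$-boundedness of $\{B_\xi^{-1},|\xi|B_\xi^{-1},\eb^2\xi^2B_\xi^{-1}\}$ for the \emph{system} (``accretive perturbations only improve sectoriality'' is not a theorem, and $a$, $\gamma$ need not commute --- this is precisely what the paper's Jordan reduction and the explicit inequalities \eqref{A.17}--\eqref{A.20} are for); and the existence of the uniform extension operator $\Cal E_\eb$ of Step 1, whose natural candidates (the parabolic semigroup and the $\eb$-scale Poisson extension) each violate one component of the $W^{(1,2),p}_\eb$-norm uniformly in $\eb$ --- in the paper this extension is again a byproduct of $T_+$, i.e.\ of the homogeneous problem you have not solved.
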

\begin{proof} The proof of estimate \eqref{A.2} is based on the
classical localization technique and on the multiplicators theorems
in Fourier spaces and is more or less standard (see e.g. \cite{17},
\cite{25}).
 That is the reason why, in order
to show that constant $C$ is indeed independent of $\eb$, we discuss below
only the principal points of this proof resting the details to the reader.
We start with the most simple case $\gamma=0$.
\begin{lemma}\label{LemA.1} Let $u$ be a solution of \eqref{A.1} with
$\gamma=0$. Then, the following estimate holds:
\begin{multline}\label{A.3}
C'\eb^{1/p}\(\|u_0\|_{W^{2-1/p,p}(\omega)}+
\eb\|\Dt u(0)\|_{W^{1-1/p,p}(\omega)}\)\le\\\le
\eb^2\|\Dt^2u\|_{L^p(\Omega_+)}+\|u\|_{L^p(\R_+,W^{2,p}(\omega))}\le
C\(\eb^{1/p}\|u_0\|_{W^{2-1/p,p}(\omega)}+\|h\|_{L^p(\Omega_+)}\),
\end{multline}
where the constants $C$ and $C'$ are independent of $\eb$.
\end{lemma}
Indeed, scaling the time $t=\eb t'$ and introducing the functions
 $\tilde u(t'):=u(t/\eb)$ and $\tilde h(t'):=h(t/\eb)$, we
 deduce
that the function $\tilde u$ satisfies equation \eqref{A.1} with
$\gamma=0$, $\eb=1$ and with the right-hand side $\tilde h$. Applying
the standard elliptic regularity theorem to this equation, see
 e.g. \cite{25}, we infer
\begin{multline}\label{A.4}
C'\(\|u_0\|_{W^{2-1/p,p}(\omega)}+
\|\partial_{t'}\tilde u(0)\|_{W^{1-1/p,p}(\omega)}\)\le\\\le
\|\tilde u\|_{L^p(\R_+,W^{2,p}(\omega))}+
\|\partial_{t'}^2\tilde u\|_{L^p(\Omega_+)}\le
C\(\|\tilde h\|_{L^p(\Omega_+)}+\|u_0\|_{W^{2-1/p,p}(\omega)}\).
\end{multline}
Returning to the time variable $t$, we derive estimate
\eqref{A.3}.
\par
In the next step, we consider the Hilbert case $p=2$.
\begin{lemma}\label{LemA.2} Let $p=2$ and
 $u\in W^{(1,2),2}_\eb(\Omega_+)$. Then, estimate \eqref{A.2}  holds.
\end{lemma}
\begin{proof} Indeed, multiplying equation \eqref{A.1} by $\eb^2\Dt^2
u+\Dx u$, integrating over $\Omega_+$, integrating by parts and using
that  $\gamma=\gamma^*$, we have
$$
\|a(\Dt^2u+\Dx u)\|^2_{L^2(\Omega_+)}+\frac{\eb^2}2(\gamma\Dt u(0),\Dt u(0))-\frac12(\gamma\Nx u(0),\Nx u(0))=\<h,\eb^2\Dt^2u+\Dx u\>_0
$$
and, therefore, since $a$ is non-degenerate and $\gamma>0$,
\begin{equation}\label{A.5}
\|\eb^2\Dt^2u+\Dx u\|_{L^2(\Omega_+)}^2+\eb^2\|\Dt u(0)\|_{L^2(\omega)}^2
\le C\(\|h\|_{L^2(\Omega_+)}^2+\|u_0\|_{W^{1,2}(\omega)}^2\),
\end{equation}
where $C$ is independent of $\eb$. Estimate \eqref{A.5}, together
with \eqref{A.3}, imply estimate \eqref{A.2} with $p=2$ and
 Lemma \ref{LemA.2} is proved.
\end{proof}
We are now ready to consider the general case $p>2$. We first note
that, due to the classical localization technique and estimate
\eqref{A.2} for $p=2$ (which is necessary in order to estimate the
subordinated terms appearing under the localization technique),
 it is sufficient to verify estimate
\eqref{A.2} only for equation
\begin{equation}\label{A.6}
a(\eb^2\Dt^2 u+\Dx u-u)-\gamma\Dt u=h,\ \ u\tto=u_0,\ \ u_0\DOM=0
\end{equation}
and only for two choices of the domain $\Omega$, namely, for 1)
$\omega=\R^n$ and 2)
$\omega_+=\R_+^{x_1}\times\R^{n-1}_{x_2,\cdots,x_n}$ (see e.g.
\cite{17} and \cite{25}). Moreover, we also note that the second
case of semi-space $\omega_+$ can be easily reduced to the first
one of the the whole space $\omega=\R^n$ by considering the odd
(with respect to $x_1$) solutions of \eqref{A.6} in $\omega=\R^n$.
Thus, there only remains to verify estimate \eqref{A.2} for
solutions of \eqref{A.6} in $\omega=\R^n$.
\par
In the next step, we reduce the problem of studying the elliptic system of
 equations \eqref{A.6} to
the analogous problem for the  scalar equation. In order to do so, it
 is convenient to extend the class of admissible solutions of
 \eqref{A.6} and consider also the {\it complex-valued} solutions
$u(t,x)=\Ree u(t,x)+i\Imm u(t,x)\in\Bbb C^k$, for every $(t,x)\in\Omega_+$.
Then, equation \eqref{A.6} is equivalent to the following one:
\begin{equation}\label{A.7}
\eb^2\Dt^2 u+\Dx u-u-\gamma'\Dt u=h,\ \ u\DOM=0,\ \ u\tto=u_0,
\end{equation}
where $\gamma':=a^{-1}\gamma$. Moreover, without loss of generality
we may assume that the matrix $\gamma'$ is reduced to its Jordan
normal form. Then, our conditions on matrices $a$ and $\gamma$
guaranties that the real parts of all eigenvalues of $\gamma'$ are
strictly positive:
\begin{equation}\label{A.8}
\sigma(\gamma')\subset\{\lambda\in\Bbb C,\ \ \Ree\lambda>0\}.
\end{equation}
Thus, \eqref{A.7} is a cascade system of scalar elliptic equations
coupled by the terms $\gamma'\Dt u$ and $\gamma'$ is in Jordan normal
form. That is why, it is sufficient to verify estimate \eqref{A.2}
only for scalar complex-valued elliptic equations of the form
\begin{equation}\label{A.9}
\eb^2\Dt^2 u+\Dx u-u-2(\alpha+i\beta)\Dt u=h,\ \ u\DOM=0,\ \ u\tto=u_0,
\end{equation}
where $\alpha,\beta\in\R$ and $\alpha>0$. We start with the case
$h=0$.
\begin{lemma}\label{LemA.3} Let $u_0\in V_\eb^p(\R^n)$ and let $u$ be a solution
 of \eqref{A.9} with $h=0$. Then, it satisfies uniform estimate
 \eqref{A.2}.
\end{lemma}
\begin{proof} Indeed, factorizing equation \eqref{A.9} (with
 $h\equiv0$), we obtain that the function $u(t)$ satisfies the
 following pseudodifferential equation:
\begin{equation}\label{A.10}
\Dt u=-A_\eb(1-\Dx)u,\ \ u\tto=u_0,
\end{equation}
where
\begin{equation}\label{A.11}
A_\eb(z):=-\frac{\alpha+i\beta-\sqrt{(\alpha+i\beta)^2+\eb^2z}}{\eb^2}\equiv
\frac z{\alpha+i\beta+\sqrt{(\alpha+i\beta)^2+\eb^2z}}
\end{equation}
and we take the branch of $\sqrt \cdot$ which is positive on
$\R_+$. Let us study equation \eqref{A.10}.
\begin{proposition}\label{PropA.1} The solution of \eqref{A.10} satisfies
\begin{equation}\label{A.12}
\|\Dt u\|_{L^p(\Omega_+)}+\|A_\eb(1-\Dx)u\|_{L^p(\Omega_+)}\le
 C\|u_0\|_{W^{2(1-1/p),p}(\R^n)},
\end{equation}
where $C$ is independent of $\eb$.
\end{proposition}
\begin{proof} We first consider the following nonhomogeneous
analogue of equation \eqref{A.10}:
\begin{equation}\label{A.13}
\Dt w+A_\eb(1-\Dx)w=h(t),\ \ w\tto=0,\  w\DOM=0,\ \ h\in L^p(\Omega_+)
\end{equation}
and verify that
\begin{equation}\label{A.14}
\|\Dt w\|_{L^p(\Omega_+)}\le C_3\|h\|_{L^p(\Omega_+)},
\end{equation}
where $C_3$ is independent of $\eb$. Indeed, let us
extend functions $w(t)$ and $h(t)$ by zero for $t<0$ and apply
the Fourier transform ($(t,x)\to \xi:=(\lambda,\xi')\in\R\times\R^n$)
to equation \eqref{A.13}. Then, we have
\begin{equation}\label{A.15}
\hat {(\Dt w)}(\xi)=K_\eb(\xi)\hat h(\xi),\ \
K_\eb(\xi):=\frac{i\lambda}{i\lambda+A_\eb(|\xi'|^2+1)}.
\end{equation}
According to the multiplicators theorem (see e.g. \cite{25}), in
order to verify estimate \eqref{A.14}, it is sufficient to prove
that
\begin{equation}\label{A.16}
\sup_{1\le i_1<\cdots<i_k\le n+1}\sup\nolimits_{\xi\in\R^{n+1}}
\big|\xi_{i_1}\cdots\xi_{i_k}\partial^k_{\xi_{i_1},\cdots,\xi_{i_k}}
K_\eb(\xi)\big|\le
C<\infty,
\end{equation}
where $C$ is independent of $\eb$. So, we need to verify
\eqref{A.16}. To this end, we note that, due to the assumption
$\alpha>0$, the following estimates hold:
\begin{multline}\label{A.17}
 \big|\Imm\sqrt{(\alpha+i\beta)^2+
\eb^2(|\xi'|^2+1)}\big|\le\kappa_1\sqrt{1+\eb^2(1+|\xi'|^2)}\le\\
\le\kappa_2\Ree\sqrt{(\alpha+i\beta)^2+\eb^2(|\xi'|^2+1)}\le
\kappa_3\sqrt{1+\eb^2(1+|\xi'|^2)}\\
\end{multline}
where $\kappa_i>0$, $i=1,2,3$, are independent of $\eb$
(indeed, these estimates can be easily verified by direct computations
based on the fact that $\alpha>0$). Estimates \eqref{A.17}, the
fact that $\alpha>0$ and definition \eqref{A.11}
immediately imply that
\begin{multline}\label{A.18}
\kappa_1'\big|\Imm A_\eb(|\xi'|^2+1)\big|\le
\frac{|\xi'|^2+1}{\sqrt{1+\eb^2(|\xi'|^2+1)}}\le\\\le
\kappa_2'\Ree A_\eb(|\xi'|^2+1)\le\kappa_3'
\frac{|\xi'|^2+1}{\sqrt{1+\eb^2(|\xi'|^2+1})}
\end{multline}
and, consequently
\begin{equation}\label{A.19}
 \kappa_1''\(|\lambda|+
|A_\eb(|\xi'|^2+1)|\)\le
\big|i\lambda+A_\eb(|\xi'|^2+1)\big|\le \kappa_2''\(|\lambda|+
|A_\eb(|\xi'|^2+1)|\),
\end{equation}
where the positive constants $\kappa_i'$ and $\kappa_i''$ are
independent of $\eb$. Moreover, due to \eqref{A.17} and \eqref{A.18}
\begin{multline}\label{A.20}
\big|\xi_{i_1}\cdots\xi_{i_k}
\partial_{\xi_{i_1}\cdots\xi_{i_k}}^kA_\eb(|\xi'|^2+1)\big|=\\
\frac{C_k(\eb^2|\xi_{i_1}|^2)\cdots (\eb^2|\xi_{i_{k-1}}|^2)}
{|(\alpha+i\beta)^2+\eb^2(|\xi'|^2+1)|^{k-1}}\cdot
\frac{|\xi_{i_k}|^2}{\sqrt{|(\alpha+i\beta)^2+\eb^2(|\xi'|^2+1)|}}\le
C_k'\big|A_\eb(|\xi'|^2+1)\big|
\end{multline}
holds, for every $2\le i_1<\cdots<i_k\le n+1$, where the constants
$C_k$ and $C_k'$ are independent of $\eb$.
There remains to note that
estimates \eqref{A.19} and \eqref{A.20} imply \eqref{A.16}.
Indeed, differentiating the kernel $K_\eb(\xi)$ with respect to $\xi_{i_1},\cdots,\xi_{i_k}$ and using \eqref{A.20}, we see that, for $2\le i_1<\cdots< i_k\le n+1$,
$$
|\xi_{i_1}\cdots\xi_{i_k}\partial_{\xi_{i_1},\cdots\xi_{i_k}}^k K_\eb(\xi)|\le C_k\(\sum_{l=1}^k\frac{|\lambda|\cdot|A_\eb(|\xi'|^2+1)|^l}{(|\lambda|+|A_\eb(|\xi'|^2+1))^{l+1}}\)\le C
$$
and the analogous uniform (with respect to $\eb\to0$) estimate for the case where $i_1=1$ also holds.
 Thus, estimate \eqref{A.16} holds and, therefore, \eqref{A.14} is also verified.
\par
Let us now prove  estimate \eqref{A.12}. To this end, we
fix an extension $v(t)$ of the initial data $u_0$ inside of $\Omega_+$
in such way that
\begin{equation}\label{A.21}
\|\Dt v\|_{L^p(\Omega_+)}+
\|v\|_{L^p(\R_+,W^{2,p}(\omega))}\le C_1\|u_0\|_{W^{2(1-1/p),p}(\R^n)},
\end{equation}
where $C_1$ is independent of $u_0$ (such an extension exists due
to the classical trace theorems, see \cite{25}) and introduce a
function $w(t):=u(t)-v(t)$ which, obviously, satisfies equation
\eqref{A.13} with $h(t):=\Dt v(t)+A_\eb(1-\Dx)v(t)$. Thus, thanks
to \eqref{A.14} and \eqref{A.21}, it is sufficient to verify that
\begin{equation}\label{A.22}
\|A_\eb(1-\Dx)v(t)\|_{L^p(\R^n)}\le C_2\|v(t)\|_{W^{2,p}(\R^n)},
\end{equation}
where $C_2$ is independent of $\eb$ and $t$. But this estimate can be easily
verified using the multiplicators theorem and estimates \eqref{A.18} and \eqref{A.20} (in the same way as it was done in the
proof of estimate \eqref{A.14}). Proposition \ref{PropA.1} is proved.
\end{proof}
We are now able to finish the proof of Lemma \ref{LemA.3}. Indeed, according to
Proposition \ref{PropA.1}, every solution $u(t)$ of \eqref{A.9} with $h=0$
satisfies estimate \eqref{A.12}. Interpreting now the term
$2(\alpha+i\beta)\Dt u$ in equation \eqref{A.9} as the right-hand
side and using Lemma \ref{LemA.1}, we derive that $u(t)$  satisfies indeed
estimate \eqref{A.2} with $h=0$ which finishes the proof of Lemma \ref{LemA.3}.
\end{proof}
In particular, Lemma \ref{LemA.3} implies that $V^p_\eb(\R^n)$ is a uniform
trace space for functions from $W^{(1,2),p}_\eb(\Omega_+)$ at $t=0$.
Indeed, the solving operator $T_+: u_0\to u$ for \eqref{A.9}
with $h=0$ can be considered as uniform (with respect to $\eb$)
extension operator for functions from $V^p_\eb(\R^n)$ to
 $W^{(1,2),p}_\eb(\Omega_+)$ and the inverse estimate
\begin{equation}\label{A.23}
\|u(0)\|_{V^p_\eb(\R^n)}\le C\|u\|_{W^{(1,2),p}_\eb(\Omega_+)}
\end{equation}
is an immediate of Lemma \ref{LemA.1} and the standard trace theorem for
the 'parabolic' space $W^{(1,2),p}_0(\Omega_+)$.
\par
We are now ready to finish the proof of Theorem \ref{ThA.1}. As it was shown
before, in order to do so, it is sufficient to verify estimate
\eqref{A.2} for equation \eqref{A.9} in $\Omega_+:=\R_+\times\R^n$.
Moreover, due to Lemma \ref{LemA.3} and due to the fact that $V^p_\eb$ is
a uniform trace space for functions from $W^{(1,2),p}_\eb$, it is
sufficient to verify that every solution $u\in W^{(1,2),p}_\eb(\R^{n+1})$  of
\begin{equation}\label{A.24}
\eb^2\Dt u+\Dx u-u-2(\alpha+i\beta)\Dt u=h(t), \ \ t\in\R, \ \ x\in\R^n
\end{equation}
satisfies the estimate
\begin{equation}\label{A.25}
\|u\|_{W^{(1,2),p}_\eb(\R\times\R^n)}\le C\|h\|_{L^p(\R^{n+1})},
\end{equation}
where $C$ is independent of $\eb$. Applying
 the Fourier transform to \eqref{A.24}, we infer
\begin{equation}\label{A.26}
\hat u(\lambda,\xi')=
\(\eb^2\lambda^2+|\xi'|^2+1-(\alpha+i\beta)i\lambda\)^{-1}\hat
 h(\lambda,\xi')
\end{equation}
Applying the multiplicators theorem to \eqref{A.26} (as we did in the
proof of Proposition \ref{PropA.1}), we derive estimate \eqref{A.25} which
finishes the proof of Theorem \ref{ThA.1}.
\end{proof}
To conclude, we formulate several standard corollaries of the proved
theorem the rigorous proof of which is left to the reader.
\begin{corollary}\label{CorA.1} Let $h\in L^p_b(\Omega_+)$ and
let $u\in W^{(1,2),p}_{\eb, b}(\Omega_+)$ be a solution of \eqref{A.1}. Then,
the following estimate holds for every $T\ge0$:
\begin{equation}\label{A.27}
\|u\|_{W^{(1,2),p}_\eb(\Omega_T)}^p\le
C\|u_0\|_{V^p_\eb(\omega)}^pe^{-\alpha T}+
C\int_0^\infty e^{-\alpha|T-t|}\|h(t)\|_{L^p(\omega)}^p\,dt,
\end{equation}
where positive constants $C$ and $\alpha$ are independent of $\eb$,
$u_0$, $T$
and $u$.
\end{corollary}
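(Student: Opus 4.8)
The plan is to deduce \eqref{A.27} from the $\eb$-uniform global estimate \eqref{A.2} of Theorem \ref{ThA.1}, applied not to $u$ itself but to the weighted function $\psi_T u$, where $\psi_T$ is a smooth, strictly positive weight comparable to $e^{-\alpha|t-T|}$ on $\R_+$ and satisfying $|\psi_T'(t)|\le C\alpha\,\psi_T(t)$ and $|\psi_T''(t)|\le C\alpha^2\,\psi_T(t)$, where $\alpha>0$ is a small parameter to be fixed below. Smoothing $e^{-\alpha|t-T|}$ near $t=T$ is necessary: the corner of $e^{-\alpha|t-T|}$ would produce a Dirac mass in its second $t$-derivative, and $\eb^2\psi_T''u$ will appear as part of the forcing for $\psi_T u$. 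Since $\psi_T$ together with its first two derivatives decays exponentially in $t$ while $u\in W^{(1,2),p}_{\eb,b}(\Omega_+)$ only grows at most boundedly on unit windows, the product $\psi_T u$ belongs to the (non-uniform) space $W^{(1,2),p}_\eb(\Omega_+)$, so Theorem \ref{ThA.1} is applicable to it.

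A direct computation shows that $\psi_T u$ solves \eqref{A.1} with initial datum $\psi_T(0)u_0$ (and $\psi_T(0)\le Ce^{-\alpha T}$ for $T\ge0$) and right-hand side $\psi_T h+r_T$, where the commutator term is
$$r_T:=2a\eb^2\psi_T'\,\Dt u+a\eb^2\psi_T''\,u-\gamma\psi_T'\,u .$$
Using $|\psi_T^{(j)}|\le C\alpha^j\psi_T$, the embedding $W^{2,p}(\omega)\hookrightarrow L^p(\omega)$ (so that $\|u\|_{L^p}$ is dominated by the $W^{(1,2),p}_\eb$-norm), the identity $\psi_T\,\Dt u=\Dt(\psi_T u)-\psi_T'u$, and $\eb\le\eb_0\le1$, one gets
$$\|r_T\|_{L^p(\Omega_+)}\le C\alpha\,\|\psi_T u\|_{W^{(1,2),p}_\eb(\Omega_+)},$$
with $C$ independent of $\eb$, $T$, $u$ and $\alpha$. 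Substituting this into \eqref{A.2} yields
$$\|\psi_T u\|_{W^{(1,2),p}_\eb(\Omega_+)}\le C\bigl(e^{-\alpha T}\|u_0\|_{V^p_\eb(\omega)}+\|\psi_T h\|_{L^p(\Omega_+)}\bigr)+C\alpha\,\|\psi_T u\|_{W^{(1,2),p}_\eb(\Omega_+)} .$$
Fixing $\alpha>0$ so small (independently of $\eb$, which is possible precisely because every constant above is $\eb$-free) that $C\alpha\le\tfrac12$, the last term is absorbed into the left-hand side.

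It then remains only to read off \eqref{A.27}. On the left, restricting to the fixed window $\Omega_T=(T,T+1)\times\omega$, where $\psi_T\ge ce^{-\alpha}>0$ is bounded below and $\psi_T'$ is bounded, the product rule again gives $\|u\|_{W^{(1,2),p}_\eb(\Omega_T)}\le C\|\psi_T u\|_{W^{(1,2),p}_\eb(\Omega_T)}\le C\|\psi_T u\|_{W^{(1,2),p}_\eb(\Omega_+)}$. On the right, $\|\psi_T h\|_{L^p(\Omega_+)}^p\le C\int_0^\infty e^{-p\alpha|T-t|}\|h(t)\|_{L^p(\omega)}^p\,dt$. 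Raising the resulting inequality to the $p$-th power and renaming $p\alpha$ as $\alpha$ gives exactly \eqref{A.27}. The only genuinely delicate point is the $\eb$-uniform absorption of $r_T$: one must check that the $\eb^2$-weights attached to $\Dt u$ and $u$ in $r_T$ are matched by the $\eb$-dependent weights in the norm $\|\cdot\|_{W^{(1,2),p}_\eb}$, so that the coefficient in front of $\|\psi_T u\|_{W^{(1,2),p}_\eb(\Omega_+)}$ is $O(\alpha)$ with no hidden $\eb$-dependent blow-up — which is exactly the purpose for which the $\eb$-dependent norms of Definition \ref{Def1.1} were tuned, and why Theorem \ref{ThA.1} was proved with $\eb$-independent constants.
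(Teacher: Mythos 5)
Your proof is correct and follows essentially the same route as the paper: the paper multiplies by the explicit smooth weight $\phi_{T,\alpha}(t)=1/\cosh(\alpha(T-t))$ (one concrete realization of your $\psi_T$) and applies Theorem \ref{ThA.1} to $\phi_{T,\alpha}u$, absorbing the commutator terms for small $\alpha$. The details you supply --- the form of $r_T$, the $O(\alpha)$ absorption uniform in $\eb$, and the passage back to $\Omega_T$ --- are exactly the ``standard estimations'' the paper leaves implicit.
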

Indeed, multiplying equation \eqref{A.1} by
$\phi_{T,\alpha}(t):=1/\cosh(\alpha(T-t))$, where $\alpha>0$ is a
sufficiently small number, and applying Theorem \ref{ThA.1} to the function
$w_{T,\alpha}(t):=\phi_{T,\alpha}(t)u(t)$, we obtain \eqref{A.27}
after the standard estimations.
\par
The next corollary gives the standard interior (with respect to $t$)
estimate for solutions
of \eqref{A.1}.
\begin{corollary}\label{CorA.2} Let $h\in L^p_b(\Omega_+)$ and
let $u\in W^{(1,2),p}_{\eb, b}(\Omega_+)$ be a solution of \eqref{A.1}. Then,
the following estimate holds for every $T\ge0$:
\begin{multline}\label{A.28}
\|u\|_{W^{(1,2),p}_\eb(\Omega_T)}\le\\\le
C(\|h\|_{L^p(\Omega_{T-1/2,T+3/2})}+\|u\|_{L^2(\Omega_{T-1/2,T+3/2})}+
\chi(1-2T)\|u_0\|_{V^p_\eb(\omega)}),
\end{multline}
where $\Omega_{T_1,T_2}:=[\max\{T_1,0\},T_2]\times\omega$, $\chi(z)$ is the
Heaviside function and the constant $C$ is independent of $\eb$, $T$
and $u$.
\end{corollary}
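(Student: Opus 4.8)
The plan is to localise the half-cylinder maximal regularity of Theorem~\ref{ThA.1} in the $t$-variable by a cut-off, and then to lower the exponent of the $u$-term on the right-hand side from $p$ to $2$ by a finite bootstrap.

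First I would fix $\varphi\in C_0^\infty(\R)$ with $\varphi\equiv1$ on $[0,1]$ and $\operatorname{supp}\varphi\subset(-1/2,3/2)$, set $\varphi_T(t):=\varphi(t-T)$, $v:=\varphi_T u$. From $\Dt^2(\varphi_T u)=\varphi_T''u+2\varphi_T'\Dt u+\varphi_T\Dt^2u$ one sees that $v$ solves \eqref{A.1} on $\Omega_+$ with right-hand side
$$\tilde h:=\varphi_T h+a\eb^2\varphi_T''u+2a\eb^2\varphi_T'\Dt u-\gamma\varphi_T'u$$
and $v\tto=\varphi_T(0)u_0$; for $T\ge1/2$ one has $\varphi_T(0)=0$, which is the source of the factor $\chi(1-2T)$, while for $T<1/2$ simply $\|v\tto\|_{V^p_\eb}\le\|u_0\|_{V^p_\eb}$. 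Applying Theorem~\ref{ThA.1} to $v$ on $\Omega_+$ and using $v\equiv u$ on $\Omega_T$, estimate \eqref{A.28} reduces to the bound $\|\tilde h\|_{L^p(\Omega_{T-1/2,T+3/2})}\le C\big(\|h\|_{L^p(\Omega_{T-1/2,T+3/2})}+\|u\|_{L^2(\Omega_{T-1/2,T+3/2})}\big)$ with $C$ independent of $\eb$. The term $\varphi_T h$ is harmless; the remaining three are lower-order in $u$ but a priori available only in $L^p$, so they must be re-estimated by the $L^2$-norm of $u$.

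This is where the bootstrap enters. Its base step is the $L^2$ interior estimate
$$\|u\|_{W^{(1,2),2}_\eb(\Omega_T)}\le C\big(\|h\|_{L^2(\Omega_{T-1/2,T+3/2})}+\|u\|_{L^2(\Omega_{T-1/2,T+3/2})}+\chi(1-2T)\|u_0\|_{V^2_\eb(\omega)}\big),$$
which I would prove by repeating the energy argument of Lemma~\ref{Lem1.1}, i.e. multiplying \eqref{A.1} by $\psi^2u$ and by the analogue of $\bar{\Cal L}_\eb u$ built from a cut-off $\psi$ supported in $(T-1/2,T+3/2)$, integrating by parts, and absorbing the $\Dt u$-contributions — which after integration by parts occur quadratically on the left — by the Cauchy--Schwartz inequality, exactly as in \eqref{1.11}--\eqref{1.14}; the nonlinear terms of Lemma~\ref{Lem1.1} being absent here, the computation is purely linear. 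Once $u$ is controlled in $W^{(1,2),2}_\eb$ on cylinders close to $\Omega_T$, the uniform-in-$\eb$ anisotropic embeddings \eqref{1.18}, \eqref{1.22}, namely $W^{(1,2),s}_\eb(\Omega_S)\subset L^{r(s)}(\Omega_S)$ with $1/r(s)=1/s-2/(n+2)$, raise the integrability of $u$ on a slightly smaller cylinder; feeding this back into the cut-off construction above and into Theorem~\ref{ThA.1} with the exponent $\min\{r(s),p\}$ raises the exponent of the estimate, and since $r(\cdot)$ strictly increases it, finitely many such steps reach $p$. Shrinking the cylinder by a fixed fraction at each of these finitely many steps keeps all constants uniform and the right-hand side of the advertised form; estimate \eqref{A.27} of Corollary~\ref{CorA.1} may be invoked in place of Theorem~\ref{ThA.1} to package the piece of the argument away from $t=0$.

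The main obstacle is keeping every constant independent of $\eb\in[0,\eb_0]$ through the iteration — which is precisely what the $\eb$-weighted norms of Definition~\ref{Def1.1} are designed for, so that Theorem~\ref{ThA.1}, the trace theorem and the embedding \eqref{1.22} all carry $\eb$-uniform constants — but the terms $a\eb^2\varphi_T''u$ and, above all, $2a\eb^2\varphi_T'\Dt u$ in $\tilde h$ are the delicate ones, since $\Dt u$ is not improved by the Sobolev embedding. Here the fact that $\operatorname{supp}\varphi_T'$ and $\operatorname{supp}\varphi_T''$ lie strictly inside $\Omega_{T-1/2,T+3/2}$ is essential: I would arrange the nested cut-offs so that the transition region of each intermediate cut-off is contained in the cylinder on which the estimate of the previous step has already been established, and use the factor $\eb^2\le1$ to keep these contributions bounded, passing them to the right-hand side without losing $\eb$-independence.
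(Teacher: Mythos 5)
Your overall route is exactly the one the paper intends: multiply by a cut\mbox{-}off $\psi_T$ equal to one on $[T,T+1]$ and supported in $[T-1/2,T+3/2]$, apply Theorem~\ref{ThA.1} (or \eqref{A.27}) to $\psi_Tu$, and then lower the $u$\mbox{-}term on the right from $L^p$ to $L^2$ by a finite bootstrap through the anisotropic embeddings; the $L^2$ base step via the energy identities of Lemmas~\ref{Lem1.1}/\ref{LemA.2} and the origin of the factor $\chi(1-2T)$ are also correctly identified. You have, moreover, put your finger on the only genuinely delicate point, the commutator term $2a\eb^2\varphi_T'\Dt u$. But your resolution of it --- ``arrange the nested cut\mbox{-}offs so that the transition region lies in the cylinder of the previous step and use the factor $\eb^2\le1$'' --- does not close the argument as written. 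At the step from exponent $s_k$ to $s_{k+1}>s_k$ the previous step controls $\Dt u$ only in $L^{s_k}$ on the larger cylinder; the embedding $W^{(1,2),s_k}_\eb\subset L^{r(s_k)}$ improves $u$ but not $\Dt u$, and the prefactor $\eb^2\le 1$ by itself changes nothing about integrability. So the right-hand side $\tilde h$ is not yet known to lie in $L^{s_{k+1}}$ with the advertised bound, and the induction stalls precisely at the term you singled out.

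The missing ingredient is an $\eb$\mbox{-}uniform interpolation in the $t$\mbox{-}variable that converts the prefactor $\eb^2$ into a genuine gain. From the one-dimensional Gagliardo--Nirenberg inequality applied for a.e.\ $x$ and H\"older in $x$ one gets, on a cylinder $I\times\omega$,
\begin{equation*}
\eb^2\|\Dt u\|_{L^{q}(I\times\omega)}\le
C\,\eb^{2(1-\theta)}\bigl\|\eb^2\Dt^2u\bigr\|_{L^{s}(I\times\omega)}^{\theta}
\|u\|_{L^{r}(I\times\omega)}^{1-\theta}+C\eb^{2}\|u\|_{L^{r}(I\times\omega)},
\end{equation*}
with $\tfrac1q=\tfrac\theta s+\tfrac{1-\theta}r$, $\theta\in[\tfrac12,1)$; since $\eb\le1$ the constants are uniform. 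Taking $s=s_k$, $r=r(s_k)$ and $\theta=1/2$ gives some $q$ strictly between $s_k$ and $r(s_k)$, so one must accept a smaller (but still strictly increasing, hence finite) chain of exponents $s_{k+1}=q$, dictated by the $\Dt u$\mbox{-}term rather than by $r(\cdot)$ alone. Alternatively, one can take $q=s_{k+1}$ and use the same inequality with $s=r=s_{k+1}$ in the form $\eb^2\|\Dt u\|_{L^{s_{k+1}}}\le\delta\,\eb^2\|\Dt^2u\|_{L^{s_{k+1}}}+C_\delta\|u\|_{L^{s_{k+1}}}$, and then absorb the $\delta$\mbox{-}term into the left-hand side by the standard iteration over a chain of shrinking cylinders; this uses the a priori finiteness of $\|u\|_{W^{(1,2),p}_{\eb,b}}$, which is part of the hypothesis. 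Either variant completes your bootstrap; without one of them the step you describe is not justified.
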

Indeed, the prove of \eqref{A.28} is based on multiplication of
equation \eqref{A.1} by the special cut-off function $\psi_T(t)$
which vanishes for $t\notin[T-1/2,T+3/2]$ and equals one for
$t\in[T,T+1]$ and on application of Theorem \ref{ThA.1} to the function
$u_T(t):=\psi_T(t)u(t)$ and can be derived  in a standard way.


\end{document}